\documentclass[11pt]{article}
\usepackage[a4paper,margin=1in]{geometry}
\usepackage[T1]{fontenc}
\usepackage{amsmath,amssymb,amsthm,mathtools}
\usepackage{newtxtext,newtxmath}
\usepackage{graphicx,booktabs,enumitem,setspace,microtype,needspace,float}
\usepackage[authoryear]{natbib}
\usepackage{authblk}
\usepackage[dvipsnames]{xcolor}
\usepackage[colorlinks=true, allcolors=black]{hyperref}
\allowdisplaybreaks
\theoremstyle{plain}
\newtheorem{theorem}{Theorem}[section]
\newtheorem{lemma}[theorem]{Lemma}
\newtheorem{proposition}[theorem]{Proposition}

\theoremstyle{definition}
\newtheorem{definition}[theorem]{Definition}
\newtheorem{assumption}{Assumption}[section]
\theoremstyle{remark}
\newtheorem{remark}[theorem]{Remark}
\numberwithin{equation}{section}
\newcommand{\E}{\mathbb E}
\newcommand{\Pp}{\mathbb P}
\newcommand{\R}{\mathbb R}
\newcommand{\F}{\mathcal F}
\newcommand{\A}{\mathcal A}

\newcommand{\Tset}{\mathcal T}
\newcommand{\ind}{\mathbf 1}
\newcommand{\dd}{\,\mathrm d}
\title{\textbf{Optimal Dividends and Proactive Capital Injection over a Finite Horizon}}
\ifdefined\anonymize
\author{}
\else

\author[a]{Ran Xu\thanks{\texttt{ran.xu@xjtlu.edu.cn}}}
\author[b]{Zuo Quan Xu\thanks{\texttt{maxu@polyu.edu.hk}}}
\author[a]{Xueying Zhang\thanks{\texttt{xueying.zhang22@student.xjtlu.edu.cn}}}
\affil[a]{Department of Financial and Actuarial Mathematics, Xi'an Jiaotong-Liverpool University, Suzhou, China}
\affil[b]{Department of Applied Mathematics, The Hong Kong Polytechnic University, Hong Kong, China}
\fi

\begin{document}
\maketitle
\begin{abstract}
We study a finite-horizon dividend and capital-injection problem for a Brownian surplus process with time-varying financing costs. Under a classical passive-injection policy, capital is supplied only at the solvency boundary. Anticipated increases in financing costs may instead make it optimal to inject capital while reserves remain strictly positive. We characterize this proactive financing mechanism through an auxiliary Dynkin game for the marginal value of liquidity. The game identifies an endogenous lower financing boundary and an upper dividend boundary, and yields restrictions on intervention timing and comparative statics with respect to the financing-cost path. Under a regular intervention-band condition, we recover the control value by spatial integration of the game value and verify that the optimal policy is two-sided reflection between the two boundaries. We establish this condition for a canonical class with
decreasing marginal dividend rewards, increasing marginal financing costs, and a compatible piecewise affine terminal marginal value. Strictly decreasing financing costs eliminate positive-reserve financing contact and reduce the marginal game to a one-player stopping problem. Numerical examples illustrate the optimal policies and the effects of anticipated financing conditions on shareholder value and intervention thresholds.
\end{abstract}
\noindent\textbf{Keywords:} optimal dividends, capital injection, singular control, Dynkin game, finite horizon.

\noindent\textbf{MSC (2020):} 93E20, 60G40, 91G05.

\section{Introduction}

The optimal dividend problem is a classic topic in actuarial science, financial mathematics, and stochastic optimal control. Originally developed by \citet{DeFinetti1957}, the basic formulation asks how a firm should distribute its surplus to shareholders to maximize the expected discounted dividends before ruin. Optimal policies frequently have a barrier or threshold structure: liquid reserves are retained below an intervention threshold, and any excess above it is distributed
as dividends. Diffusion formulations are developed by \citet{AT97} and \citet{Hojgaard1999controlling}. Related L\'evy models are studied by \citet{Avram07}. For further work on optimal dividends, see, among others, \citet{schmidli2007stochastic}, \citet{Avanzi-2007}, and \citet{azcue2012optimal}. These results make the dividend problem a natural setting for studying a firm's liquidity allocation over time.

However, in the classical dividend model, a sufficiently aggressive payout policy
eventually exposes the firm to ruin. This limitation has motivated an
extensive literature in which shareholders are allowed or required to
inject capital in order to maintain solvency. The resulting control problem
must balance the immediate benefit of dividend distributions against the
future cost of recapitalizing the firm. In infinite-horizon,
time-homogeneous models, the optimal policy is frequently characterized by
an upper dividend barrier together with capital injection at the solvency
boundary. Under this policy, reserves are prevented from becoming negative,
but new capital is supplied only when the firm has exhausted its existing
liquidity. Capital injection is therefore reactive: it serves as a
last-resort mechanism for avoiding ruin rather than as an independently
timed financing decision. Dividend and capital-injection problems of this
type have been studied under a variety of surplus dynamics, transaction-cost
specifications, and solvency requirements; see, for example,
\cite{KS08,SS11,Zhu16,Yao-2011,Zhao-2014,Lindensjo-2020, xu2020optimal,wang2025optimal}.

The corresponding finite-horizon problem is substantially more delicate.
Once a fixed terminal date is introduced, time becomes an additional state
variable, the intervention boundaries generally depend on the remaining time to maturity, and the treatment of the terminal payoff becomes part of the control problem. For the pure-dividend problem without capital injection,
\citet{DeAngelis2017} characterized the optimal strategy through reflection
at a moving boundary and established a probabilistic connection between the
singular control problem and an associated optimal stopping problem. This
connection provides an alternative to a direct analysis of the
time-dependent Hamilton--Jacobi--Bellman equation and, in particular,
identifies the dividend boundary through the stopping region of the
marginal problem.

The closest benchmark for the present paper is
\citet{ferrari2019optimal}, who study a finite-horizon dividend problem with
capital injection for a drifted Brownian surplus process. Their formulation
allows both the marginal reward from dividend payments and the marginal
cost of capital injections to vary deterministically over time. By applying
the integration approach developed in \citet{karoui2006integration}, they
connect the singular control problem to a one-player optimal stopping problem. Their model prescribes capital injection as the minimal regulator at zero and optimizes the dividend process. Their Remark~2.4 separately gives sufficient conditions under which this injection rule is optimal in a broader solvency-preserving dividend control problem.

Related finite-horizon dividend,
reinsurance and capital injection problems have been considered in more
recent studies, including
\cite{chen2021free,guan2023finite,han2025finite}. These models differ in
their objectives and technical formulations, but largely preserve the
classical interpretation of capital injection as an intervention triggered
by imminent insolvency.

The present paper starts from the observation that waiting until reserves
are exhausted need not be optimal for capital injection when financing conditions vary over time.
In particular, external equity issuance need not become progressively cheaper over
the planning horizon. If shareholders anticipate that obtaining capital
will become more expensive, they may prefer to inject funds while current
financing conditions remain favorable, even when the firm still holds
positive reserves. Such a capital injection does not respond to an immediate
solvency crisis. Instead, it increases current liquidity in anticipation of
future financing needs and higher future recapitalization costs.

Motivated by this possibility, we study a finite-horizon dividend and
capital-injection problem for a Brownian surplus process with deterministic,
time-dependent marginal dividend rewards and external financing costs. In contrast
to models in which capital injection is activated only at the solvency boundary, we
allow shareholders to choose both the dividend and capital-injection processes, rather than prescribing capital injection
as the minimal regulator at zero. We interpret the
reserve as liquid funds available for operations and shareholder
distributions. A capital injection made when the reserve is strictly positive is
therefore a \emph{proactive capital injection}. The central questions are
when this additional financing flexibility has value, under what conditions
a proactive injection can occur, and how its availability changes the joint
timing of financing and dividend distributions.

These questions are closely related to the corporate finance literature on
liquidity management under costly external finance. In the dynamic models
of \citet{decamps2011free} and \citet{bolton2011unified}, corporate cash
holdings, shareholder payouts, and external financing are jointly
determined by the marginal value of internal liquidity. Financing frictions
can make one unit of cash retained within the firm more valuable than the
same unit distributed immediately to shareholders. They therefore create
an intertemporal tradeoff between current payouts and the preservation of
liquidity for future investment or funding needs. Our model isolates a
closely related tradeoff in a tractable singular control setting. Because
both dividend payout and financing are represented as (singular) controls, their timing can be
described through explicit intervention regions, and the marginal value of
liquidity determines whether the firm distributes, retains, or raises
funds.

The specification of the financing cost is also motivated by empirical
evidence that raising external equity is costly and that the magnitude of
the associated frictions varies across firms and over time.
\citet{hennessy2007costly} obtain economically substantial estimates of the
costs of external equity financing from a structural corporate-finance model.
\citet{lee2009seasoned} find that poorer quality accounting information is
associated with higher underwriting fees, more negative announcement returns, and a higher probability that a seasoned equity offer is
withdrawn. These findings support a distinction between the amount of
liquidity provided to the firm and the total cost borne by shareholders to
provide that liquidity. In particular, they caution against interpreting
the coefficient attached to capital injection as a discount factor alone.
In our formulation, the marginal financing coefficient represents the
shareholder cost of increasing the firm's liquid reserve and may incorporate
issuance costs, information frictions, or other financing wedges.

Allowing this coefficient to vary with time is particularly relevant over
a finite horizon. \citet{choe1993common} document the business-cycle variation
in equity issuance and find less negative announcement effects during
expansions, consistent with lower adverse-selection costs in those periods.
\citet{belo2019external} study aggregate shocks to external equity financing
costs and their implications for financing flows, investment, and asset
prices. \citet{chan2021policy} find that greater economic policy uncertainty
is associated with larger offer-price discounts in seasoned equity offerings, especially
when economic conditions are weak. Taken together, these studies motivate
allowing the marginal cost of capital to change flexibly with calendar
time, rather than imposing a profile under which future financing is always
more attractive. They do not, of course, imply that a firm can forecast
future financing conditions with certainty. The deterministic financing cost
path in our model should instead be understood as a reduced-form
representation of an anticipated financing environment over a specified
planning horizon.

Financing decisions may also reflect funding needs that extend beyond the
instant in which securities are issued. \citet{huang2021cash} find that
equity issuers retain a substantial proportion of issuance proceeds as cash
and that anticipated near-future funding needs help explain this retention.
Their evidence supports analyzing current financing decisions jointly with
the prospective value of corporate liquidity. In particular, external funds may be raised before
all existing liquidity is exhausted because the value of cash depends on
future conditions as well as current reserves. Therefore, in our model, proactive
recapitalization arises from the interaction among the anticipated
financing-cost path, operating risk, and the value assigned to liquidity at
maturity.

Allowing proactive capital injection changes the geometry of the stochastic control problem. When recapitalization is optimally postponed until reserves reach zero, the lower intervention boundary is fixed by the solvency constraint, and the nontrivial marginal problem is effectively one-sided. When proactive recapitalization is optimal, a positive lower financing boundary must be determined jointly with the upper dividend boundary. The continuation region is the band between these boundaries. Reserves below the lower boundary are increased through capital injection, reserves above the upper boundary are reduced through dividend payments, and reserves inside the band are left uncontrolled. Under the verification conditions established in this paper, the two thresholds become the reflection boundaries of the optimal
policy in the original dividend and capital injection control problem.

To make this structure precise, let $V(t,x)$ denote the optimal shareholder
value at time $t$ when the current reserve is $x$, and let $f(t)$ and $m(t)$
denote, respectively, the current marginal reward from dividends and the
current marginal cost of capital injection. The possibility of making an
immediate dividend payment gives a lower bound on the marginal value of
reserves, whereas the possibility of making an immediate capital injection
gives an upper bound. More precisely, for $h>0$,
\[
    f(t)
    \leq
    \frac{V(t,x+h)-V(t,x)}{h}
    \leq
    m(t).
\]
Thus, wherever the reserve derivative exists, one has  $f(t)\leq V_x(t,x)\leq m(t)$. The lower gradient constraint becomes binding in the dividend region,
where an additional unit of reserves is immediately distributed, while the
upper gradient constraint becomes binding in the financing region, where
an additional unit of reserves can be acquired at the marginal cost
$m(t)$. Between these regions, the marginal value evolves according to a
linear parabolic equation. The interaction of the two marginal intervention
conditions therefore produces a double-obstacle problem and, probabilistically,
a Dynkin game.

This game representation is not merely a technical reformulation. It
captures the strategic timing embedded in the marginal value of liquidity.
One stopping player represents the option to realize the marginal dividend
reward, while the other represents the option to acquire an additional
unit of reserve at the marginal financing cost. Under a passive capital injection scenario,
the financing intervention is tied to absorption at the solvency boundary,
and the marginal characterization reduces to a one-player stopping problem.
Under proactive capital injection, the timing of the financing intervention becomes
endogenous, so the appropriate marginal object is a two-player stopping game, known as a Dynkin game. The passage from optimal stopping to a Dynkin game is therefore the
mathematical counterpart of moving from reactive recapitalization to an
independently timed financing decision.

Connections between bounded-variation control and Dynkin games have been
developed in several general and application-specific settings. For example,
\citet{Boetius2005} establishes such a connection for a broad class of
stochastic control problems; \citet{DeAngelisFerrari2014} use a
finite-horizon Dynkin game to characterize two investment boundaries and an
associated reflected investment--disinvestment policy;
\citet{Yang2014} studies time-inhomogeneous finite-horizon singular control
through spatial integration of a game value, including the treatment of
terminal interventions. Our contribution is to implement and extend this
approach for a solvency-constrained dividend problem with proactive capital
injection. The solvency boundary, maturity, and the two endogenous
intervention mechanisms play distinct roles in the marginal game. Moreover,
recovering the original control value from its marginal representation
requires identifying an additive time-dependent term and verifying a
two-sided reflected policy under regularity that is not imposed directly on
the unknown control value.

Our analysis proceeds from the marginal Dynkin game in Section \ref{sec:game} to the original singular control
problem. We first characterize the intervention regions and their dependence on financing conditions.
We then recover the shareholder value and verify the optimality of the corresponding reflected policy.

The main contributions are as follows.

Firstly, we formulate the marginal value of liquidity as a finite-horizon Dynkin game with exogenous termination upon absorption at the solvency boundary or at maturity. We establish the existence of
the game value and saddle-point stopping rules, and show that
spatial monotonicity yields two interval contact regions.
 These
results identify an endogenous lower financing boundary and an upper
dividend boundary for the original dividend and capital injection problem. Relative to the passive capital-injection benchmark of \citet{ferrari2019optimal}, financing in our model can occur at strictly positive reserves, and its timing is determined
jointly with dividend payments.
Table~\ref{tab:FS-comparison} summarizes the main differences.
\begin{table}[H]

		\centering
		\small
		\caption{Comparison with the finite-horizon passive-injection benchmark.}
		\label{tab:FS-comparison}
		\begin{tabular}{p{0.2\textwidth} p{0.34\textwidth}p{0.34\textwidth}}
			\toprule
			Feature & \citet{ferrari2019optimal} & Present paper\\
			\midrule
			Injection choice & Minimal regulator at zero & Independently chosen singular control\\
			Marginal problem & Optimal stopping with absorption & Dynkin game with two obstacles\\
			Financing boundary & Fixed at zero & Endogenous $a(t)\ge0$\\
			Dividend boundary & One moving boundary & Determined jointly with $a(t)$\\
			Verification & Spatial integration of a stopping value & Spatial integration of a game value\\
			Terminal payoff in the verified class & Linear liquidation & Structured concave continuation payoff\\
			\bottomrule
		\end{tabular}

\end{table}

Secondly, our analysis identifies two economic implications.  The financing-cost path governs both the incentive to retain
liquidity and the timing of capital injections. At interior
times, a positive financing boundary requires $m'(t)\ge0$, while a finite dividend boundary requires $f'(t)\le0$. However, the financing sign condition is necessary
but is not sufficient on its own to ensure proactive capital injection (see Figure \ref{fig:beyond-canonical} in Section \ref{sec:numerical}). In addition, holding dividend rewards, the terminal payoff, and the remaining model inputs fixed, we establish comparative statics showing that a higher financing-cost path weakly reduces total shareholder value while weakly increasing the marginal value of liquidity under our verification conditions.	Retained reserves provide protection against more expensive future financing, encouraging liquidity retention. When current financing costs are unchanged, higher future
costs also weakly raise the current financing boundary,	strengthening the incentive to prefund capital needs. These results distinguish the incentive to retain liquidity 	against more expensive future financing from the incentive
to prefund capital needs.

Finally, under a regular intervention-band condition, we connect these marginal Dynkin game results to the original dividend and capital injection control problem by recovering the full shareholder value (i.e., the value function of the control problem) through
spatial integration of the game value,  including the required time-dependent additive term, and verify the optimal dividend and capital injection policy as
two-sided Skorokhod reflection between the identified financing and dividend boundaries.
For a concrete class of primitives with decreasing dividend
rewards, increasing financing costs, and a structured concave
terminal payoff, we establish the Sobolev regularity required
for verification and prove that both boundaries are finite,
continuous, and strictly separated. We also identify their
distinct terminal limits. These results establish the optimal
reflected policy for this class from explicit assumptions on
the model primitives. The terminal payoff can represent a
continuation value assigned to liquidity at the end of the
planning horizon.

The remainder of the paper is organized as follows.
Section~\ref{sec:control} formulates the dividend and capital-injection
problem, interprets the reward and financing coefficients, and establishes
terminal consistency and finiteness.
Section~\ref{sec:game} introduces and analyzes the marginal Dynkin game.
Section~\ref{sec:value-characterization} recovers the control value from the
game and verifies the optimal reflected policy.
Section~\ref{sec:canonical} establishes the regular intervention-band
condition for the canonical class, with the technical boundary arguments
deferred to Appendix~\ref{app:canonical-verification}.
Section~\ref{sec:numerical} presents the numerical method and illustrations,
and Section~\ref{sec:conclusion} concludes the paper.

\section{The Dividend and Capital Injection Problem}\label{sec:control}

Fix a finite horizon $T\in(0,\infty)$. Let $(\Omega,\F,\mathbb F,\Pp)$ carry a standard Brownian motion $W$, where $\mathbb F$ denotes its augmented natural filtration, which satisfies the usual conditions. For an initial calendar time $t\in[0,T]$ and initial surplus $x\ge0$, write $H=T-t$. The uncontrolled surplus is
\begin{equation}\label{eq:uncontrolled}
 A_s(x)=x+\mu s+\sigma W_s,\qquad s\ge0,
\end{equation}
where $\mu\in\R$ and $\sigma>0$ are constants. A dividend process $D$ and a capital injection process $I$ produce the controlled surplus
\begin{equation}\label{eq:controlled}
 X_s^{D,I}=A_s(x)-D_s+I_s,\qquad 0\le s\le H.
\end{equation}
The law of $A(x)$ does not depend on the initial calendar time $t$; $t$ determines the remaining horizon $H=T-t$ and the reward and cost coefficients. We use elapsed time for the controls and calendar time for these coefficients.

\begin{definition}[Admissible controls]\label{def:admissible}
For $(t,x)\in[0,T]\times[0,\infty)$, the admissible set $\A(t,x)$ consists of pairs $(D,I)$ of nondecreasing, finite-valued, $\mathbb F$-adapted c\`adl\`ag processes on $[0,H]$ such that $D_{0-}=I_{0-}=0$,
\[
 X_s^{D,I}\ge0\quad\hbox{for all }s\in[0,H]\ \hbox{a.s.},
 \qquad \E[D_H+I_H]<\infty.
\]
Initial and terminal jumps are allowed.
\end{definition}

Let $f,m:[0,T]\to(0,\infty)$ denote the marginal dividend reward and the marginal cost of capital injection, respectively, and let $g(T,\cdot)$ be the terminal payoff. Define the performance functional for any $(D,I)\in\A(t,x)$ as
\begin{align}\label{eq:payoff}
 J(t,x;D,I):=\E\left[\int_{[0,H]} f(t+s)\dd D_s
            -\int_{[0,H]}m(t+s)\dd I_s+g(T,X_H^{D,I})\right].
\end{align}
Then, the value function is
\begin{equation}\label{eq:value}
    V(t,x):=\sup_{(D,I)\in\A(t,x)}J(t,x;D,I).
\end{equation}
 Although we retain the traditional terminology ``surplus'' from the optimal dividend literature, $X$ can be interpreted as the firm's liquid reserve. It represents funds available for operations and shareholder distributions. The cumulative dividend process $D$ reduces this reserve, whereas $I$ records additions through external financing, for example, equity issuance. Thus, a dividend payment of one unit removes one unit of liquidity, and a capital injection of one unit adds one unit of liquidity. The solvency constraint is imposed throughout the planning horizon, so shareholders cannot choose default as an alternative to meeting a financing requirement.

The functions $f$ and $m$ attach values to these two liquidity transfers. Specifically, $f(t)$ is the reward from distributing one unit of reserve at calendar time $t$, and $m(t)$ is the cost of obtaining one additional unit through external finance. The process $I$ measures the liquidity received by the firm, rather than the gross proceeds from
securities issuance. Consequently, $m(t)\dd I$ is the cost entering the shareholder objective, and $m$ may include proportional issuance fees, offer discounts, and other financing frictions. Such costs are distinct from the one-unit increase in the liquid reserve. The corporate finance evidence discussed in the previous section motivates this distinction between internal liquidity and the cost of externally replenishing it.

All rewards and costs in \eqref{eq:payoff} are expressed in common valuation units. For example, with a constant discount rate $r$ and nominal per-unit coefficients $\eta$ and $\kappa$, we may write
\[
 f(t)=e^{-rt}\eta(t),\qquad m(t)=e^{-rt}\kappa(t).
\]
The time profile of the discounted financing cost then satisfies $m'(t)=e^{-rt}\bigl(\kappa'(t)-r\kappa(t)\bigr)$.
An increase in the nominal coefficient $\kappa$ is therefore different from an increase in $m$, where the latter occurs when $\kappa'(t)>r\kappa(t)$. A constant nominal cost gives a decreasing discounted cost when $r>0$, but discounting does not force $m$ to decrease if nominal financing conditions deteriorate sufficiently quickly. The intervention-timing restrictions below concern $m$ itself, because this coefficient determines the cost assigned to capital injections at different dates in the objective.

In the classical passive capital injection structure, capital is supplied only when needed to keep reserves nonnegative. The injection process is the minimal regulator of the dividend-controlled surplus at zero, as in the solvency-preserving formulation of \citet{KS08}. In \citet{ferrari2019optimal}, minimal injection at zero is part of the model specification. Their Remark~2.4 identifies decreasing $m$, together with $\min_{s\in[0,T]}m(s)>g_x(T,x)$ for every $x\ge0$, as sufficient conditions for this rule to remain optimal when capital injection is added to the problem. The condition on the terminal marginal value matters because the value of carrying additional funds to maturity also affects whether early capital injection is profitable. When the lower boundary is fixed at zero, the marginal control problem can be formulated as an optimal stopping problem for an absorbed diffusion.

In our model, both $D$ and $I$ are chosen as singular controls, and $m$ may increase over part of the horizon. If financing is expected to become more expensive, waiting until liquidity is exhausted need no longer be optimal. Shareholders may instead supply capital while reserves are positive, securing funds under the currently more favorable conditions. Then, the lower financing threshold must be determined jointly with the upper dividend threshold. This possibility motivates the two stopping decisions in the associated Dynkin game. The presence of a positive financing region also depends on operating risk, future dividend opportunities, and the terminal value of liquidity. Our results distinguish a necessary sign condition on $m'$ from the additional conditions that generate a positive financing region.

As in \citet{ferrari2019optimal}, the marginal rewards and costs are deterministic functions of calendar time and do not depend on the current level of reserve. They provide a reduced-form description of the anticipated financing conditions while preserving a one-dimensional stochastic state. Allowing the reward and cost functions to depend on reserves would lead to state-dependent obstacles in the Dynkin game, while stochastic financing conditions would introduce additional state variables. These are natural extensions of the problem, but they lie outside the scope of the present
model.

Finally, $g(T,x)$ is the terminal value when the firm holds $x$ units of liquid reserves at the end of the planning horizon $T$. It may represent a liquidation value or an exogenously specified continuation value of the firm. The latter interpretation is relevant when retained liquidity has value for funding opportunities beyond the planning horizon. Unlike $f$ and $m$, the terminal payoff $g$ depends on the reserve. We do not assume that it is simply the cash distribution $f(T)x$, but permit a general concave terminal value with decreasing marginal value of liquidity, and later introduce a canonical specification of the terminal payoff.

To proceed, we introduce the following conditions, which ensure that the specified terminal value is consistent with the interventions available at maturity.

\begin{assumption}\label{assumption2_1}
The functions $f,m$ belong to $C^1([0,T])$ and are strictly positive. The function $g(T,\cdot)\in C^1([0,\infty))$ is nondecreasing and concave, $g(T,0)$ is finite, and $g_x(T,\cdot)$ is bounded and Lipschitz continuous. Moreover,
\begin{enumerate}[label=(\roman*)]
\item\label{ass:spread} $m(t)>f(t)$ for $0\le t\le T$;
\item\label{ass:terminal} $f(T)\le g_x(T,x)\le m(T)$ for $x\ge0$.
\end{enumerate}
\end{assumption}
Condition~\ref{ass:spread} requires the cost of raising one unit of liquidity to exceed the reward from immediately distributing it. It therefore excludes a profitable same-date financing--dividend operation. Condition~\ref{ass:terminal} ensures that the terminal value is consistent with a final reserve adjustment. If $g_x(T,x)<f(T)$, a marginal unit is worth more as a dividend than when retained in the terminal payoff. If $g_x(T,x)>m(T)$, an additional unit supplied through financing increases terminal value by more than it costs. The two inequalities rule out both types of profitable marginal adjustment at maturity. The next proposition gives the corresponding statement for arbitrary, possibly large terminal interventions.
The Lipschitz continuity of $g_x(T,\cdot)$ is used in the terminal estimates for the marginal game, see Proposition~\ref{prop:viscosity-characterization} and Lemma~\ref{lem:app-regularity}. In the canonical specification in Section \ref{sec:canonical}, the additional compatibility $g_x(T,0)=m(T)$ makes the estimate uniform up to the solvency boundary. The Lipschitz continuity of the terminal payoff $g(T,\cdot)$ itself follows from boundedness of $g_x(T,\cdot)$.

\begin{proposition}\label{prop:terminal-envelope}
For a primitive terminal payoff $g_0$, optimization over final dividends and capital injections gives
\begin{equation}\label{eq:terminal-envelope}
 \widehat g(x)=\sup_{y\ge0}\{g_0(y)+f(T)(x-y)^+-m(T)(y-x)^+\}.
\end{equation}
 For a continuously differentiable payoff $g(T,\cdot)$, applying \eqref{eq:terminal-envelope} with $g_0=g(T,\cdot)$ leaves $g$ unchanged if and only if
$f(T)\le g_x(T,x)\le m(T)$ for every $x\ge0$, using a right derivative at zero. Consequently, under Assumption~\ref{assumption2_1}, $V(T,x)=g(T,x)$ and terminal jumps may be omitted without loss of generality.
\end{proposition}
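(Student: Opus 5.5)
The argument has three parts. First we derive the envelope formula \eqref{eq:terminal-envelope}. Then we prove the equivalence by comparing slopes of $g_0$ with $f(T)$ and $m(T)$. Finally we identify $V(T,\cdot)$ with $g(T,\cdot)$.

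\emph{Envelope formula.} At maturity a terminal jump pair $(\Delta D,\Delta I)$ moves the reserve from $x$ to $y=x-\Delta D+\Delta I\ge0$. The net payoff is $g_0(y)+f(T)\Delta D-m(T)\Delta I$. For a fixed target $y$, every admissible pair satisfies $\Delta D-\Delta I=x-y$. Hence
\[
f(T)\Delta D-m(T)\Delta I=f(T)(x-y)-(m(T)-f(T))\Delta I .
\]
By Assumption~\ref{assumption2_1}\ref{ass:spread} this expression is maximized by taking $\Delta I$ as small as possible, namely $\Delta I=(y-x)^+$ and $\Delta D=(x-y)^+$. This gives $f(T)(x-y)^+-m(T)(y-x)^+$, and taking the supremum over $y\ge0$ yields \eqref{eq:terminal-envelope}.

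\emph{Equivalence.} Take $g_0=g(T,\cdot)\in C^1$. Choosing $y=x$ shows that $\widehat g\ge g$ always holds.

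For sufficiency, suppose $f(T)\le g_x\le m(T)$. For $y<x$ the mean value theorem gives $g(T,y)-g(T,x)\le -f(T)(x-y)$, since $g(T,x)-g(T,y)\ge f(T)(x-y)$. Therefore $g(T,y)+f(T)(x-y)\le g(T,x)$. For $y>x$ we similarly get $g(T,y)-m(T)(y-x)\le g(T,x)$. Hence $\widehat g=g$.

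For necessity, suppose $\widehat g=g$. The choices $y=x+h$ and $y=x-h$ (the latter when $x>0$) give
\[
\frac{g(T,x+h)-g(T,x)}{h}\le m(T),\qquad \frac{g(T,x)-g(T,x-h)}{h}\ge f(T).
\]
Letting $h\downarrow0$ yields $g_x\le m(T)$ for $x\ge0$ and $g_x\ge f(T)$ for $x>0$. The lower bound at $x=0$, understood as a right derivative, follows by continuity of $g_x$.

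\emph{Identification $V(T,x)=g(T,x)$.} When $t=T$ we have $H=0$, so an admissible control consists only of a jump at time $0$, which is also the terminal time. Thus $V(T,x)=\widehat g(x)$ with $g_0=g(T,\cdot)$. The integrability condition is harmless here because jumps are deterministic up to $\mathcal F_0$-measurability, and $\mathcal F_0$ is trivial. By the equivalence just proved, $V(T,x)=g(T,x)$.

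\emph{Omitting terminal jumps.} For $t<T$, fix a control with a terminal jump at $H$ and let $X_{H-}$ denote the pre-jump state. Applying the pointwise envelope inequality at $X_{H-}$ shows that the terminal contribution is at most $g(T,X_{H-})$. Replacing the control by one that removes the jump at $H$ keeps it admissible, since $X_{H-}\ge0$ by the left limits of a nonnegative càdlàg path. The expected payoff does not decrease.

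The only delicate points are two. The first is handling $x=0$, where only $y\ge0$ is available on the left side. The second is using pathwise $\omega$-by-$\omega$ domination together with integrability to justify removing the terminal jump. Neither presents a substantive obstacle.
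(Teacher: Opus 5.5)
Your proposal is correct and follows essentially the same route as the paper: cancel the common part $\min(\Delta D,\Delta I)$ of simultaneous terminal jumps (using $m(T)>f(T)$) to obtain the envelope, integrate the slope bounds for sufficiency, use one-sided marginal perturbations plus continuity of $g_x$ at zero for necessity, and note that at $t=T$ only time-$T$ jumps are available. Your explicit pathwise removal of a terminal jump for $t<T$ is the same jump estimate the paper applies at maturity in the verification proof of Theorem~\ref{thm:theorem-main}.
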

\begin{proof}
For a net terminal reserve $y=x-d+i$, simultaneous dividends and injections reduce the payoff by $(m(T)-f(T))\min(d,i)$. Removing this common amount yields \eqref{eq:terminal-envelope}. If the derivative bounds hold, integration gives
\[
 f(T)d-m(T)i+g(T,x-d+i)\le g(T,x)
\]
for every feasible $d,i\ge0$, with equality for $d=i=0$. Conversely, the absence of a profitable marginal capital injection gives $g_x\le m(T)$, and the absence of a profitable marginal dividend gives $g_x\ge f(T)$ at $x>0$. Continuity extends the latter bound to zero. At $T$, these are the only available interventions.
\end{proof}

Thus Assumption~\ref{assumption2_1}(ii) requires the chosen terminal value to incorporate the firm's intervention opportunities at maturity.  If one begins with a primitive payoff $g_0$ that violates the marginal bounds, the relevant terminal value is its intervention envelope as given in \eqref{eq:terminal-envelope}, rather than $g_0$ itself. Under the stated bounds, terminal jumps remain admissible, but cannot improve the objective; hence, the control value satisfies $V(T,x)=g(T,x)$.

Note that Assumption~\ref{assumption2_1}\ref{ass:spread} alone does not exclude profitable operations across different dates. This issue requires attention because capital injection is an independent and unbounded control, rather than the regulator needed to prevent insolvency. For example, if $f(s)>m(r)$ at dates $r<s$, an extra injection of $K$ units at $r$, followed by an extra dividend of $K$ units at $s$, adds $K(f(s)-m(r))$ to the payoff. The additional reserves remain nonnegative over this period, so these operations can be superimposed on an admissible policy. Their gain tends to infinity as $K$ increases.

On the other hand, concavity and boundedness of the terminal marginal payoff give the limit $ g_x^\infty:=\lim_{x\to\infty}g_x(T,x)$.
This is the marginal terminal value of liquidity when the firm already holds a sufficiently large reserve. If $g_x^\infty>m(r)$, arbitrarily large additional capital injections at $r$, carried to maturity, likewise generate unbounded gains. We therefore impose the following sufficient condition for finiteness, covering both future dividend distributions and terminal accumulation.

\begin{assumption}\label{assumption2_2}
For every $t\in[0,T]$,
\begin{equation}\label{eq:cross-time-condition}
 m(t)>\max\left\{\sup_{t\le s\le T}f(s),\ g_x^\infty\right\}.
\end{equation}
\end{assumption}

Assumption~\ref{assumption2_2} requires the financing cost at each date to exceed the largest marginal rate at which an arbitrarily large additional reserve can subsequently be converted into value. Its terminal component uses the asymptotic slope $g_x^\infty$, rather than $\sup_{x\ge0}g_x(T,x)$. This distinction preserves an economically relevant incentive for a proactive capital injection. Because $g(T,\cdot)$ is concave, its marginal value decreases with reserves; it may exceed $m(t)$ at low reserves while falling below $m(t)$ for sufficiently large reserves. Additional capital can then be valuable up to a finite amount without making unlimited accumulation profitable. In particular, the well-posedness condition is compatible with the proactive financing mechanism studied below. The strict inequality in \eqref{eq:cross-time-condition} is a convenient sufficient condition for finiteness.

\begin{lemma}\label{lem:finiteness}
Under Assumptions~\ref{assumption2_1} and \ref{assumption2_2},
the value function $V(t,x)$ given by \eqref{eq:value} is finite for every
$(t,x)\in[0,T]\times[0,\infty)$.
\end{lemma}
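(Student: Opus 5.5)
The plan is to prove two bounds separately: a lower bound from one explicit admissible strategy, and an upper bound, uniform over $\A(t,x)$, from a pathwise integration-by-parts estimate. The weight in that estimate is a deterministic nonincreasing function lying between $f$ and $m$.

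\emph{Lower bound.} Take $D\equiv0$ and let $I$ be the minimal regulator at zero, $I_s=\sup_{0\le u\le s}(A_u(x))^-$. Then $X\ge0$, and $\E[I_H]\le\E[\sup_{u\le H}|\mu u+\sigma W_u|]<\infty$, so $(D,I)\in\A(t,x)$. Since $g(T,\cdot)$ is nondecreasing and $X_H\ge0$, we have $g(T,X_H)\ge g(T,0)$. Hence
\[
V(t,x)\ge J(t,x;0,I)\ge g(T,0)-\max_{[0,T]}m\cdot\E[I_H]>-\infty .
\]

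\emph{Upper bound, step 1: a linear majorant of $g$.} Write $m_*:=\min_{[0,T]}m$. By Assumption~\ref{assumption2_2}, $m_*>g_x^\infty\ge0$, where $g_x^\infty\ge 0$ because $g(T,\cdot)$ is nondecreasing. Fix $\kappa\in(g_x^\infty,m_*)$ and choose $y_0$ with $g_x(T,y_0)\le\kappa$. Concavity then gives
\[
g(T,y)\le C_0+\kappa y\quad\text{for all } y\ge0,\qquad C_0:=g(T,y_0).
\]
The slack $\kappa>g_x^\infty$ is needed because $\int_0^\infty(g_x-g_x^\infty)\,dy$ may diverge.

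\emph{Step 2: the weight and integration by parts.} Define
\[
\psi(u):=\max\Bigl\{\sup_{u\le r\le T}f(r),\ \kappa\Bigr\},\qquad u\in[t,T].
\]
This $\psi$ is continuous, nonincreasing, and satisfies $f\le\psi$ and $\psi(T)\ge\kappa$. By Assumption~\ref{assumption2_2} and the choice $\kappa<m_*$, also $\psi\le m$ on $[t,T]$.

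Since $\psi$ is continuous and of bounded variation and $X$ is a càdlàg semimartingale, pathwise integration by parts applies, including jumps of $D$ and $I$ at $0$ and $H$:
\[
\psi(T)X_H=\psi(t)x+\int_0^H\psi(t+s)(\mu\dd s+\sigma\dd W_s)-\int_{[0,H]}\psi(t+s)\dd D_s+\int_{[0,H]}\psi(t+s)\dd I_s+\int_0^H X_s\dd\psi(t+s).
\]
The last integral is $\le0$, because $X\ge0$ and $\psi$ is nonincreasing. Using $g(T,X_H)\le C_0+\kappa X_H\le C_0+\psi(T)X_H$ (since $X_H\ge0$), we get
\[
J(t,x;D,I)\le C_0+\psi(t)x+|\mu|H\max\psi+\E\!\int(f-\psi)\dd D+\E\!\int(\psi-m)\dd I.
\]
The stochastic integral has zero mean because $\psi$ is bounded and deterministic. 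The expectations of the $\dd D$ and $\dd I$ integrals are well defined because $\E[D_H+I_H]<\infty$.

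\emph{Step 3: conclusion.} The last two terms are $\le0$ since $f\le\psi\le m$. This yields a bound independent of $(D,I)$:
\[
V(t,x)\le C_0+\psi(t)x+|\mu|T\max\psi<\infty .
\]

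The main obstacle is Step 2, namely choosing a single weight that lies above $f$, lies below $m$, and dominates the terminal slope. Making it nonincreasing is what gives the drift term from $X\,\dd\psi$ the favourable sign. Assumption~\ref{assumption2_2} is used exactly to fit such a $\psi$, together with the slack $\kappa>g_x^\infty$, between the two obstacles.
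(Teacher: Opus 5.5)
Your proposal is correct and follows essentially the same route as the paper. Both arguments obtain the lower bound from $D\equiv0$ with the minimal regulator at zero. Both obtain the upper bound by integrating by parts against a nonincreasing weight that lies between $f$ and $m$ and exceeds $g_x^\infty$ by a positive margin. Your choice of an arbitrary $\kappa\in(g_x^\infty,\min m)$, in place of the paper's $g_x^\infty+\delta/2$, is an inessential variation, and you spell out somewhat more explicitly the concavity step that gives the linear majorant of $g(T,\cdot)$.
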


\begin{proof}
Define
\[
q_0(t):=\max\left\{\sup_{t\le s\le T}f(s),\,g_x^\infty\right\}.
\]
Assumption~\ref{assumption2_2} and continuity imply that $\delta:=\min_{0\le t\le T}\{m(t)-q_0(t)\}>0.$
Set
\[
q(t)
:=
\max\left\{
\sup_{t\le s\le T}f(s),\,
g_x^\infty+\frac{\delta}{2}
\right\}.
\]
Then $q$ is bounded and nonincreasing, and we have $f(t)\le q(t)$ and $q(t)\le m(t)-\delta/2$.
Since $g_x(T,x)\to g_x^\infty$ as $x\to\infty$, there exists a constant
$C<\infty$ such that $g(T,x)\le C+q(T)x$ for  $x\ge0$. For any $(D,I)\in\A(t,x)$, integration by parts applied to
$q(t+s)X_s^{D,I}$ gives
\begin{align*}
J(t,x;D,I)
\le{}&
C+q(t)x+\mu\E\Bigg[\int_0^H q(t+s)\,\dd s\Bigg] +\E\Bigg[\int_{[0,H]}
\bigl(f(t+s)-q(t+s)\bigr)\,\dd D_s\Bigg]\\
&+\E\Bigg[\int_{[0,H]}
\bigl(q(t+s)-m(t+s)\bigr)\,\dd I_s\Bigg] +\E\Bigg[\int_{(0,H]} X_{s-}^{D,I}\,\dd q(t+s)\Bigg],
\end{align*}
where the stochastic integral with respect to $W$ has zero expectation and has been omitted.  The dividend and capital injection terms are nonpositive, and the last term is nonpositive since $X^{D,I}\ge0$ and $q$ is nonincreasing. In addition,  since $q$ is nonincreasing, we also have
\[
\int_0^H q(t+s)\,\dd s\le Hq(t).
\]
Therefore
$ J(t,x;D,I)\le C+q(t)x+|\mu|Hq(t)$
uniformly over $(D,I)\in\A(t,x)$. Writing $Q=\sup_{0\le s\le T}q(s)<\infty$, we obtain $V(t,x)\le C+Qx+|\mu|TQ$.
Thus the upper bound is finite for each reserve level and grows at most linearly in $x$, uniformly in $t$.

For a finite lower bound, we simply take $D\equiv0$ and let $I^0$ be the minimal
Skorokhod regulator at zero.  Since $m$ is bounded, the regulator has finite expectation over a finite horizon, and $g(T,\cdot)\ge g(T,0)>-\infty$, this policy yields a finite lower bound. Together with the upper bound established above, this proves that $V(t,x)$ is finite.
\end{proof}

\section{The Marginal Dynkin Game}\label{sec:game}

The auxiliary game introduced below extends the finite-horizon stopping
connection of \citet{ferrari2019optimal}.
In their setting, capital injection
is the minimal regulator required to keep the surplus nonnegative, so the
dividend decision is the only singular control, and the marginal
value is characterized by a one-player optimal stopping problem.
 In the present model, the injection process is chosen independently alongside dividends.
Hence, the marginal value must account for two possible interventions:
paying an additional unit of dividend yields the marginal reward $f(t)$,
whereas supplying an additional unit of capital incurs the marginal cost
$m(t)$.  This naturally leads to a stopping problem with two obstacles,
$f$ and $m$, and hence to a zero-sum Dynkin game.
We formulate this auxiliary Dynkin game below and show in
Section~\ref{sec:value-characterization} that its value determines the
marginal value of the original dividend and  capital injection control problem.
Throughout the paper, we use $V$ to denote the control value of the problem \eqref{eq:value} and use $v$ to denote the game value; Theorem~\ref{thm:theorem-main} establishes $V_x=v$ under the verification conditions.

Fix $(t,x)\in[0,T]\times[0,\infty)$, write $H=T-t$, and recall the uncontrolled process $A_s(x)$ from \eqref{eq:uncontrolled}. Define the hitting time
\begin{equation}\label{eq:hitting}
 S(x):=\inf\{s\ge0:A_s(x)\le0\}.
\end{equation}
Set $\rho=H\wedge S(x)$.
Let $\Tset_\rho$ be the set of stopping times with values in $[0,\rho]$.
The dividend player chooses $\tau\in\Tset_\rho$ to maximize the payoff,
and the capital-injection player chooses $\delta\in\Tset_\rho$ to minimize
it.
If neither player stops before $\rho$, the game terminates exogenously, either when the reserve reaches zero or at maturity.  When the reserve reaches zero before maturity, the terminal
payment of the marginal game is the marginal capital-injection cost
$m(t+S(x))$.  If maturity is reached before the solvency boundary, the terminal payment is the terminal marginal value
$g_x(T,A_H(x))$.  Accordingly, we define the terminal payoff by
\begin{equation}\label{eq:Z}
 Z_{t,x}:=m(t+\rho)\ind_{\{S(x)<H\}}
       +g_x(T,A_H(x)^+)\ind_{\{H\le S(x)\}}.
\end{equation}
For $t<T$ and $x>0$, the game payoff is
\begin{equation}\label{eq:game-payoff}
 \Gamma_{t,x}(\tau,\delta)=
 \begin{cases}
 f(t+\tau),&\tau\le\delta,\quad\tau<\rho,\\
 m(t+\delta),&\delta<\tau,\quad\delta<\rho,\\
 Z_{t,x},&\tau\wedge\delta=\rho.
 \end{cases}
\end{equation}
Thus, simultaneous stopping before $\rho$ is assigned the dividend payoff.
The lower and upper values of the game are defined, respectively, as
\begin{equation}\label{eq:game-values}
 \underline v(t,x)=\sup_{\tau\in\Tset_\rho}\inf_{\delta\in\Tset_\rho}
       \E[\Gamma_{t,x}(\tau,\delta)],\qquad
 \overline v(t,x)=\inf_{\delta\in\Tset_\rho}\sup_{\tau\in\Tset_\rho}
       \E[\Gamma_{t,x}(\tau,\delta)].
\end{equation}

\begin{proposition}\label{prop:game-existence}
Under Assumption~\ref{assumption2_1}, the game has a bounded value
$v=\underline v=\overline v$. It satisfies
\begin{equation}\label{eq:obstacle-bounds}
 f(t)\le v(t,x)\le m(t),\qquad t<T,\ x>0.
\end{equation}

For $t<T$ and $x>0$, a saddle point is given by the first contact times
\begin{align*}
 \tau_v&=\inf\{s\in[0,\rho):v(t+s,A_s(x))=f(t+s)\}\wedge\rho,\\
 \delta_v&=\inf\{s\in[0,\rho):v(t+s,A_s(x))=m(t+s)\}\wedge\rho.
\end{align*}
\end{proposition}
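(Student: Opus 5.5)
The plan is to reduce the statement to the classical theory of Dynkin games for a continuous strong Markov process killed at a stopping time, in the form of Lepeltier--Maingueneau or Ekström--Peskir. The state is the space--time diffusion $(t+s,A_s(x))$ on $[0,T)\times(0,\infty)$, killed at $\rho=H\wedge S(x)$. Write the game in the standard form with lower obstacle $L_s=f(t+s)\ind_{\{s<\rho\}}+Z_{t,x}\ind_{\{s=\rho\}}$, upper obstacle $U_s=m(t+s)\ind_{\{s<\rho\}}+Z_{t,x}\ind_{\{s=\rho\}}$, and terminal payment $Z_{t,x}$. Three ingredients are needed. First, $L\le U$ with $L=U$ at $\rho$: before $\rho$ this is Assumption~\ref{assumption2_1}\ref{ass:spread}, and the tie-break in \eqref{eq:game-payoff} assigning simultaneous stops to the dividend player is the standard convention. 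Second, integrability: $f,m$ are continuous on $[0,T]$ and $g_x(T,\cdot)$ is bounded, so all payoffs are bounded by $K:=\max\{\|f\|_\infty,\|m\|_\infty,\|g_x(T,\cdot)\|_\infty\}$. Third, path regularity: $L$ and $U$ are continuous on $[0,\rho)$ because $f,m\in C^1$, and the only issue is at $\rho$.

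The key step, and the main obstacle, is regularity at $\rho$. The general existence theorem (Mertens/Lepeltier--Maingueneau) requires $L$ to be upper semicontinuous and $U$ lower semicontinuous in expectation along stopping times. I would check the two ways the game can end.
\begin{itemize}[leftmargin=*]
\item On $\{S(x)<H\}$, the limit of $f(t+s)$ as $s\uparrow\rho$ is $f(t+\rho)\le m(t+\rho)=Z_{t,x}$, and the limit of $m(t+s)$ is $m(t+\rho)=Z_{t,x}$. So $U$ is continuous and $L$ jumps up at $\rho$, which is the favourable direction: the maximizer loses nothing by not stopping just before $\rho$.
\item On $\{H\le S(x)\}$, the limits are $f(T)\le g_x(T,A_H)\le m(T)$ by Assumption~\ref{assumption2_1}\ref{ass:terminal}, again the favourable directions for both players.
\end{itemize}
The continuity of $s\mapsto A_s$ and the quasi-left-continuity of $\rho$ as a hitting time make these pathwise statements hold at predictable times. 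This gives existence of the value $v=\underline v=\overline v$ with $|v|\le K$. Since $\rho$ is the first exit of a continuous process from an open set, I expect this step to go through cleanly.

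For \eqref{eq:obstacle-bounds}, let $t<T$ and $x>0$, so that $\rho>0$ a.s. Choosing $\tau=0$ gives $\Gamma=f(t)$ for every $\delta$, since ties favour $\tau$; hence $v\ge f(t)$. Choosing $\delta=0$ against any $\tau>0$ gives $m(t)$, while $\tau=0$ gives $f(t)<m(t)$; hence $v\le m(t)$.

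For the saddle point, I would use the Markovian structure. Define $v(t,x)$ on the killed state space and extend it to $x\le0$ (for $t<T$) by $m(t)$ and to $t=T$ by $g_x(T,x)$, so that the terminal payment equals $v$ evaluated at the exit point. The standard semiharmonic characterization (Ekström--Peskir, Theorem 2.1 and its killed version) requires continuity, or at least the appropriate semicontinuity, of $v$. I would first establish that $v$ is continuous on $[0,T)\times(0,\infty)$. Continuity in $x$ comes from a coupling argument: the Lipschitz property of $g_x$, the continuity of $m$, and continuity of $S(x)$ in $x$ (a.s., by the law of iterated logarithm) give $|v(t,x)-v(t,y)|\to0$. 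Continuity in $t$ uses uniform continuity of $f,m$.

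With continuity of $v$ in hand, the process $v(t+s,A_s)$ has the following properties:
\begin{itemize}[leftmargin=*]
\item it is a submartingale up to $\delta_v$, because outside the upper contact set the minimizer does not stop;
\item it is a supermartingale up to $\tau_v$;
\item it is a martingale up to $\tau_v\wedge\delta_v$.
\end{itemize}
The martingale property, together with $v=f$ at $\tau_v<\rho$, $v=m$ at $\delta_v<\rho$, and $v\to Z$ at $\rho$, yields
\[
\E[\Gamma(\tau,\delta_v)]\le v\le \E[\Gamma(\tau_v,\delta)]
\]
for all $\tau,\delta$.

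The delicate point is continuity of $v$ up to the killing boundary, i.e.\ $v(t+s,A_s)\to Z$ as $s\uparrow\rho$. I would try to handle it by the explicit bounds $f\le v\le m$ near $x=0$ together with regularity of $0$ for Brownian motion, and by the Lipschitz continuity of $g_x$ near $t=T$.
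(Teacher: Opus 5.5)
\textbf{Where your proposal agrees with the paper, and where it breaks.} Your reduction to a standard Dynkin game, the ordering, boundedness and jump-direction checks at $\rho$, and the obstacle bounds via immediate stopping all match the paper. The paper cites Dumitrescu--Quenez--Sulem and so verifies Mokobodzki's condition explicitly. Citing Lepeltier--Maingueneau instead is fine, provided you extend $L,U$ by $Z_{t,x}$ on $[\rho,H]$ so that they are c\`adl\`ag on a deterministic interval.

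The gap is in the saddle-point step, and it has two parts.
\begin{enumerate}[label=(\roman*)]
\item Your one-sided properties have the wrong orientation. On $[0,\delta_v]$ the minimizer is inactive, so $v(t+s,A_s(x))$ must be a \emph{super}martingale there. It is a \emph{sub}martingale on $[0,\tau_v]$. For arbitrary $\tau$ you have $\Gamma_{t,x}(\tau,\delta_v)\le v(t+\tau\wedge\delta_v,A_{\tau\wedge\delta_v}(x))$, using your extension of $v$. The bound $\E[\Gamma_{t,x}(\tau,\delta_v)]\le v(t,x)$ therefore needs precisely the supermartingale property stopped at $\delta_v$. The martingale property up to $\tau_v\wedge\delta_v$ only gives $\E[\Gamma_{t,x}(\tau_v,\delta_v)]=v(t,x)$ and does not control deviations.
\item More seriously, these properties are asserted, not proved. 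Continuity of $v$ does not imply them, and ``the minimizer does not stop there'' is only a heuristic. Deriving them needs a dynamic programming principle at stopping times, which is the substance of the general Dynkin-game theory. The continuity arguments you sketch do not supply it. (For continuity in $t$ they would also have to control the change of horizon $T-t$, not just the uniform continuity of $f$ and $m$.)
\end{enumerate}

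\textbf{How to close the gap.} You have already verified the hypothesis that finishes the proof. Because $L$ jumps only upward and $U$ only downward at $\rho$, and both are continuous elsewhere, $L$ and $-U$ are left-upper-semicontinuous along stopping times. The general saddle-point theorem then says that the first times the value \emph{process} meets $L$ and $U$ form a saddle point. This is Dumitrescu--Quenez--Sulem, Theorem~3.7, as used in the paper, or its classical Lepeltier--Maingueneau counterpart.

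It remains to translate this into the stated stopping times. The strong Markov property identifies the value process on $[0,\rho)$ with $v(t+s,A_s(x))$. Since $L_\rho=U_\rho=Z_{t,x}$, both contact times are at most $\rho$, which yields exactly $\tau_v$ and $\delta_v$. Neither continuity of $v$ nor the boundary limit $v(t+s,A_s(x))\to Z_{t,x}$ is needed for this statement. The paper establishes those facts separately, through the viscosity characterization.
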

\begin{proof}
To reformulate the random-horizon stopping game as a standard Dynkin game on the deterministic interval $[0, H]$, define for $0\le s\le H$,
\begin{align*}
    L_s &:=f(t+s)\ind_{\{s<\rho\}}+Z_{t,x}\ind_{\{s\ge\rho\}},\\
    U_s & :=m(t+s)\ind_{\{s<\rho\}}+Z_{t,x}\ind_{\{s\ge\rho\}}.
\end{align*}
Then, the processes $L$ and $U$ are $\mathbb F$-adapted and c\`adl\`ag.  Indeed, before
$\rho$, they are given by the continuous deterministic functions
$f(t+\cdot)$ and $m(t+\cdot)$, respectively; at $\rho$ they may have
one jump to $Z_{t,x}$, and they remain constant thereafter.  Moreover,
Assumption~\ref{assumption2_1}\ref{ass:spread} gives $L_s\le U_s$, for $0\le s\le H$. Since $f$, $m$, and $g_x(T,\cdot)$ are nonnegative and bounded, $L$ and $U$ are uniformly bounded, so the integrability requirement in the Dynkin-game existence theorem (see e.g., \cite{dumitrescu2016dynkin}) is automatically satisfied.
We verify Mokobodzki\textquotesingle s condition in the form of \citet[Definition 3.9]{dumitrescu2016dynkin}.  Define
\[
Y_s^0
:=
\frac{f(t+s)+m(t+s)}{2}\ind_{\{s<\rho\}}
+
Z_{t,x}\ind_{\{s\ge\rho\}}.
\]
Then, $L_s\le Y_s^0\le U_s$, for $0\le s\le H$. Moreover, $Y^0$ is a bounded, $\mathbb F$-adapted, and finite-variation process, i.e., before
$\rho$ it follows the continuously differentiable function $(f+m)/2$,
it has at most one jump at $\rho$, and it is constant thereafter. Its total variation is bounded by a deterministic constant, because the preterminal path is continuously differentiable and its only possible jump is bounded. Write
$Y_s^0=Y_0^0+K_s^+-K_s^-$ for its Jordan decomposition, with $K_0^\pm=0$.
Then, we define two auxiliary processes
\[
 P_s:=\E[Y_H^0+K_H^-\mid\F_s]-K_s^-,\qquad
 Q_s:=\E[K_H^+\mid\F_s]-K_s^+,
\]
which are bounded nonnegative supermartingales, and $P_s-Q_s=Y_s^0$.
Since $L\le Y^0\le U$, Mokobodzki's condition follows.
 Then, for  $\mathbb F$-stopping times $\tau$ and $\delta$ taking values in $[0,H]$, the payoff $L_\tau\ind_{\{\tau\le\delta\}}+U_\delta\ind_{\{\delta<\tau\}}$
coincides with \eqref{eq:game-payoff} after replacing $\tau$ and $\delta$
by $\tau\wedge\rho$ and $\delta\wedge\rho$.  Therefore, the standard
results of \citet[Proposition 3.10 and Theorem 3.5]{dumitrescu2016dynkin} yield
\[
\sup_{\tau\in\Tset_\rho}
\inf_{\delta\in\Tset_\rho}
\E\!\left[\Gamma_{t,x}(\tau,\delta)\right]
=
\inf_{\delta\in\Tset_\rho}
\sup_{\tau\in\Tset_\rho}
\E\!\left[\Gamma_{t,x}(\tau,\delta)\right].
\]
Thus the game has a value $v(t,x)$.

At $\rho$, the lower obstacle $L$ can only jump upwards and the upper obstacle $U$ can only jump downwards: the absorption payoff is $m(t+\rho)$, while the maturity payoff lies between $f(T)$ and $m(T)$. Both obstacles are continuous elsewhere. Hence $L$ and $-U$ are left-upper-semicontinuous along stopping times. By \citet[Theorem 3.7]{dumitrescu2016dynkin}, their first contact times with the value process form a saddle point. The strong Markov property identifies the value process before $\rho$ with $v(t+s,A_s(x))$, giving the stopping times stated above.

Suppose now that $t<T$ and $x>0$, so that $\rho>0$ almost surely.
If the maximizing (dividend) player chooses $\tau=0$, then, irrespective of
$\delta$, the payoff is at least $f(t)$.  Hence, $v(t,x)\ge f(t)$. Similarly, if the minimizing (capital injection) player chooses $\delta=0$, then the payoff is at most $m(t)$, and therefore
$v(t,x)\le m(t)$.
This proves \eqref{eq:obstacle-bounds}.
\end{proof}

\subsection{Viscosity characterization and comparison}
\label{subsec:viscosity-comparison}

The Dynkin-game value can be characterized as a viscosity solution of
a double-obstacle problem, using the viscosity solution framework of \citet{crandall1992users}. Let
\[
 \mathcal Q=([0,T]\times[0,\infty))\setminus\{(T,0)\},\qquad
 \mathcal Q^\circ=(0,T)\times(0,\infty),
\]
and define the operator
\[
 \mathcal L:=\partial_t+\mu\partial_x+\frac{\sigma^2}{2}\partial_{xx}.
\]
We exclude $(T,0)$ because the boundary condition at the solvency level
and the terminal condition need not be compatible there: approaching along
$x=0$ gives $m(T)$, whereas approaching along $t=T$ gives
$g_x(T,0)$.

For a locally bounded function $z$, let $z^*$ and $z_*$ denote its upper
and lower semicontinuous envelopes, respectively. At boundary points, these
envelopes are understood through limits taken from the interior
$\mathcal Q^\circ$.
\begin{proposition}
\label{prop:viscosity-characterization}
Under Assumption~\ref{assumption2_1}, the game value $v$ is the bounded viscosity solution of
\begin{equation}\label{eq:double-obstacle}
    \max\left\{
        v-m,\,
        \min\left[v-f,-\mathcal Lv\right]
    \right\}=0
    \qquad\text{on }\mathcal Q^\circ,
\end{equation}
in the sense that $v^*$ is a viscosity subsolution and $v_*$ is a viscosity
supersolution. Moreover,
\begin{align}
    \lim_{\substack{(s,y)\to(t,0)\\(s,y)\in\mathcal Q^\circ}}
        v(s,y)
    &=m(t),
    &&0\leq t<T,
    \label{eq:viscosity-boundary-limit-zero}\\
    \lim_{\substack{(s,y)\to(T,x)\\(s,y)\in\mathcal Q^\circ}}
        v(s,y)
    &=g_x(T,x),
    &&x>0.
    \label{eq:viscosity-boundary-limit-terminal}
\end{align}
The convergence in \eqref{eq:viscosity-boundary-limit-zero} is  uniform for $t$ in compact subsets of $[0,T)$, and  the convergence in \eqref{eq:viscosity-boundary-limit-terminal} is uniform for $x$ in compact subsets of $(0,\infty)$.
\end{proposition}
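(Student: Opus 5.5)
We prove the two parts separately: the viscosity property in the interior $\mathcal Q^\circ$, and the boundary limits \eqref{eq:viscosity-boundary-limit-zero}--\eqref{eq:viscosity-boundary-limit-terminal}. The viscosity property will come from the dynamic programming principle for Dynkin games. By Proposition~\ref{prop:game-existence} and the strong Markov property, for $(t,x)\in\mathcal Q^\circ$ and any stopping time $\theta\le\rho$, the value satisfies
\[
 v(t,x)=\sup_{\tau}\inf_{\delta}\E\bigl[f(t+\tau)\ind_{\{\tau\le\delta,\tau<\theta\}}+m(t+\delta)\ind_{\{\delta<\tau,\delta<\theta\}}+v(t+\theta,A_\theta(x))\ind_{\{\tau\wedge\delta\ge\theta\}}\bigr].
\]
This follows because the value process from the Dumitrescu et al.\ framework is, before $\rho$, the process $v(t+s,A_s(x))$. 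Moreover, it is a supermartingale up to $\tau_v$ and a submartingale up to $\delta_v$. Since $v$ need not be continuous a priori, we work with the envelopes.

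For the subsolution property of $v^*$, take a test function $\varphi\ge v^*$ touching at $(t_0,x_0)$ with $v^*(t_0,x_0)>f(t_0)$ and $v^*\le m$. The bound $v^*\le m$ is automatic from \eqref{eq:obstacle-bounds} and continuity of $m$. Argue by contradiction: suppose $-\mathcal L\varphi(t_0,x_0)>0$. Choose a small parabolic ball where $v<f+\epsilon$ fails, so the dividend player does not stop there, and where $-\mathcal L\varphi>0$. Take points $(t_n,x_n)\to(t_0,x_0)$ with $v(t_n,x_n)\to v^*(t_0,x_0)$. Let $\theta_n$ be the exit time from the ball. Use $\delta=\theta_n$ for the minimizer and apply It\^o's formula to $\varphi$. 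This gives $v(t_n,x_n)\le\varphi(t_n,x_n)-\eta$ for some fixed $\eta>0$ after the usual strict-touching perturbation $\varphi+|\cdot|^4$. Letting $n\to\infty$ gives the contradiction. The supersolution property of $v_*$ is symmetric: the maximizer uses the exit time and the lower obstacle $f$ is handled via $v_*\ge f$.

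For the boundary limit \eqref{eq:viscosity-boundary-limit-zero}, take $(s,y)\to(t,0)$ with $t<T$. Then $S(y)\to0$ in probability, uniformly in $s$ on compacts of $[0,T)$, while $H=T-s$ stays bounded away from zero. Two strategies bound the value. If the maximizer stops at $\tau=0$, the payoff is at least $f(s)$, which is the wrong obstacle, so instead we compare with $\delta=\rho$. Under this choice any $\tau$ yields either $f(s+\tau)$ with $\tau<\rho$, or $m(s+\rho)$. For the lower bound, the maximizer plays $\tau=\rho$. This gives $\E[\Gamma]\ge\E[m(s+\delta)\ind_{\{\delta<\rho\}}+m(s+\rho)\ind_{\{\delta\ge\rho\}}]\ge\min_{[s,s+\rho]}m$ in expectation on $\{\rho=S\}$, and the expectation tends to $m(t)$ by continuity of $m$ and $\Pp(S(y)>\epsilon)\to0$. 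For the upper bound we already have $v\le m(s)\to m(t)$. Uniformity on compacts follows since $\Pp(S(y)>\epsilon)$ does not depend on $s$ and $m$ is uniformly continuous.

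For the terminal limit \eqref{eq:viscosity-boundary-limit-terminal}, take $(s,y)\to(T,x)$ with $x>0$. Then $H\to0$ and $\Pp(S(y)\le H)\to0$, uniformly for $x$ in compacts of $(0,\infty)$. Any payoff lies between $\min_{[s,T]}f$-type terms and $\max m$-type terms, or equals $g_x(T,A_H(y))$. Letting $\tau=H$ for the maximizer and $\delta=H$ for the minimizer bounds the value between
\[
\E\bigl[\min\{\min_{[s,T]}m,\ g_x(T,A_H(y)^+)\}\bigr]-o(1)\quad\text{and}\quad\E\bigl[\max\{\max_{[s,T]}f,\ g_x(T,A_H(y)^+)\}\bigr]+o(1).
\]
By Assumption~\ref{assumption2_1}(ii), $f(T)\le g_x\le m(T)$. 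The Lipschitz continuity of $g_x(T,\cdot)$ gives $\E|g_x(T,A_H(y))-g_x(T,x)|\le L(|y-x|+|\mu|H+\sigma\sqrt H)$. Together with continuity of $f$ and $m$, both bounds converge to $g_x(T,x)$ uniformly on compacts.

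The main obstacle is the interior viscosity argument. A rigorous dynamic programming principle at random exit times for a game value not yet known to be continuous requires care. I would rely on the value-process characterization in Dumitrescu et al.\ (the value process is the solution of a doubly reflected BSDE with the stated sub- and supermartingale properties) rather than proving a DPP from scratch. Uniqueness, meaning the word ``the'' in the statement, would then be deferred to a comparison principle on $\mathcal Q$ using these boundary limits.
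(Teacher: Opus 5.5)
Your proposal is correct and follows essentially the paper's route. You use a dynamic programming principle up to exit from small cylinders inside $\mathcal Q^\circ$, followed by the standard test-function argument for $v^*$ and $v_*$. At $x=0$ you use the ``minimizer stops at once / maximizer waits'' bounds, at $t=T$ the ``each player waits'' bounds, and the Lipschitz continuity of $g_x(T,\cdot)$ gives local uniformity. The paper derives the DPP from the strong Markov property and concatenation of conditional $\varepsilon$-optimal rules rather than from the doubly reflected BSDE, which is immaterial.

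Two slips need fixing, though neither affects the argument you actually run with $\delta=\theta_n$:
\begin{enumerate}
\item $v(t+s,A_s(x))$ is a \emph{sub}martingale up to $\tau_v$ and a \emph{super}martingale up to $\delta_v$, not the reverse. Before the maximizer's contact time, only the upper-obstacle reflection acts.
\item In the subsolution step, what you need on the small ball is $f\le\varphi-\eta$, which follows from $\varphi(t_0,x_0)=v^*(t_0,x_0)>f(t_0)$. You do not need $v>f+\epsilon$ there, which a possibly discontinuous $v$ need not satisfy.
\end{enumerate}
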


\begin{proof}
The strong Markov property, together with concatenation of conditional
$\varepsilon$-optimal stopping rules, yields the dynamic programming
principle up to the first exit time from any relatively compact
space--time cylinder contained in $\mathcal Q^\circ$. The usual
test-function argument then shows that $v^*$ is a viscosity subsolution
and $v_*$ is a viscosity supersolution of
\eqref{eq:double-obstacle}.

It remains to verify the boundary limits. First, fix a compact interval
$K\subset[0,T)$. The absorption time $S(x)$ converges to zero in probability
as $x\downarrow0$, uniformly over starting times $t\in K$. The minimizing
player can stop immediately, giving $v(t,x)\leq m(t)$.  For the reverse estimate, let the maximizing player wait until absorption
or maturity. If the minimizing player stops first at stopping time $\delta$, the payoff is
$m(t+\delta)$; if absorption occurs first, the payoff is
$m(t+S(x))$. The only other possibility is that the state remains positive
until maturity, whose probability converges to zero as $x\downarrow0$,
uniformly for $t\in K$. Boundedness and uniform continuity of the payoff
functions therefore imply
\[
    v(t,x)\longrightarrow m(t)
    \qquad\text{as }x\downarrow0,
\]
locally uniformly for $t<T$. Next, let $K$ be a compact subset of $(0,\infty)$. As $t\uparrow T$, the
probability of absorption before maturity converges to zero uniformly for
$x\in K$, and $A_{T-t}(x)\longrightarrow x$ in probability. If the minimizing player waits until absorption or
maturity, the resulting upper estimate converges to $g_x(T,x)$ because
$f(T)\leq g_x(T,x)$. If the maximizing player waits instead, the
corresponding lower estimate converges to $g_x(T,x)$ because
$g_x(T,x)\leq m(T)$. The Lipschitz continuity of
$g_x(T,\cdot)$ makes this convergence locally uniform in $x$. This proves
\eqref{eq:viscosity-boundary-limit-zero} and
\eqref{eq:viscosity-boundary-limit-terminal}.
\end{proof}

The preceding limits in \eqref{eq:viscosity-boundary-limit-zero} and
	\eqref{eq:viscosity-boundary-limit-terminal} allow us to extend the interior game value by
	\begin{equation}\label{eq: game-bound}
			v(t,0):=m(t),\qquad 0\le t<T,
		\qquad
		v(T,x):=g_x(T,x),\qquad x>0.
	\end{equation}
	At the terminal corner $(T,0)$, we set $v(T,0):=g_x(T,0)$
	in accordance with the maturity convention.

\begin{remark}
\label{rem:terminal-corner}
 Continuity at the corner $(T,0)$ is not assumed unless we require $g_x(T,0)=m(T)$. To be specific, take $\mu=0$, $f\equiv1$, $m\equiv2$, and $g(T,x)=x$.
Both players waiting until $\rho$ is a saddle point: an earlier dividend stop pays $1$, while an earlier financing stop pays $2$.
Consequently, for $t<T$ and $x>0$,
\[
 v(t,x)=1+\Pp(S(x)<T-t)
 =1+2\left[1-\Phi\left(\frac{x}{\sigma\sqrt{T-t}}\right)\right],
\]
where $\Phi$ is the standard normal distribution function.
This value is continuous on $\mathcal Q$, but its boundary limits at $(T,0)$ differ: the solvency boundary gives $2$, whereas maturity gives $1$. In the canonical specification in Section \ref{sec:canonical}, we let $g_x(T,0)=m(T)$, and Lemma~\ref{lem:app-regularity} establishes continuity also at this corner.
\end{remark}

We next establish a comparison principle for the double-obstacle problem.

\begin{lemma}[Comparison principle]
\label{lem:halfline-comparison}
Let $u$ be a bounded upper semicontinuous viscosity subsolution and let $w$
be a bounded lower semicontinuous viscosity supersolution of
\eqref{eq:double-obstacle} on $\mathcal Q^\circ$. Suppose that
\begin{align*}
    u^*(t,0)&\leq w_*(t,0),
    &&0\leq t<T,\\
    u^*(T,x)&\leq w_*(T,x),
    &&x\ge0.
\end{align*}
Then
\[
    u\leq w
    \qquad\text{on }~\mathcal Q^\circ.
\]
\end{lemma}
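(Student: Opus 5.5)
The proof follows the standard doubling-of-variables argument for parabolic obstacle problems on an unbounded domain, with two modifications. A penalty handles spatial infinity, and a time perturbation makes the subsolution strict.

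We argue by contradiction. Suppose $\sup_{\mathcal Q^\circ}(u-w)=:\theta>0$. For small $\varepsilon,\beta>0$ define the perturbed subsolution
\[
u_{\varepsilon,\beta}(t,x):=u(t,x)-\varepsilon\,(T-t+1)^{-1}\cdot 0-\frac{\varepsilon}{t}-\beta\,\psi(x),\qquad \psi(x)=\sqrt{1+x^2}.
\]
More concretely, we use $u-\eta/t-\beta\psi$, together with an additional term $-\gamma(T-t)$ to obtain strictness. The term $-\eta/t$ forces the maximum away from $t=0$. Since $u,w$ are bounded, the term $-\beta\psi$ forces the maximum into a compact set in $x$. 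Because $\psi'$ and $\psi''$ are bounded, $\mathcal L(\beta\psi)$ is $O(\beta)$. Subtracting $\gamma(T-t)$ gives $-\mathcal L$ of the perturbed function less than or equal to $-\mathcal Lu-\gamma+O(\beta)$, so for $\beta\ll\gamma$ it is a strict subsolution of the PDE part. Subtracting positive terms only lowers the function, so the obstacle parts $v-m\le0$ and $v-f$ remain favourable. For $\beta,\gamma,\eta$ small the supremum of $u_{\varepsilon}-w$ is still at least $\theta/2>0$.

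Next we double variables. Set
\[
\Phi(t,x,s,y)=\tilde u(t,x)-w(s,y)-\frac{|x-y|^2+|t-s|^2}{2\alpha}
\]
and let $(t_\alpha,x_\alpha,s_\alpha,y_\alpha)$ be a maximizer over $\overline{\mathcal Q}\times\overline{\mathcal Q}$. Such a maximizer exists by upper semicontinuity and the compactness provided by the penalty. By the standard argument, $|x_\alpha-y_\alpha|^2/\alpha+|t_\alpha-s_\alpha|^2/\alpha\to0$, and limit points $(\hat t,\hat x)$ maximize $\tilde u-w$. We then rule out boundary limit points. If $\hat x=0$ with $\hat t<T$, or if $\hat t=T$, the boundary hypotheses $u^*\le w_*$ on $\{x=0\}$ and on $\{t=T\}$ give $\tilde u-w\le0$ there, which contradicts the positive supremum. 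This is the step where the envelopes taken from the interior matter, including at the corner $(T,0)$, which is covered by the hypothesis $u^*(T,x)\le w_*(T,x)$ for $x\ge0$. Hence for small $\alpha$ both points are interior.

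At the interior points we apply the parabolic Crandall--Ishii lemma (\citet{crandall1992users}, Theorem 8.3). It gives elements $(a,p,X)\in\bar{\mathcal P}^{2,+}\tilde u(t_\alpha,x_\alpha)$ and $(b,p,Y)\in\bar{\mathcal P}^{2,-}w(s_\alpha,y_\alpha)$ with $a-b$ equal to the time-derivative contribution of the penalty (zero for a symmetric penalty) and $X\le Y$. We then split into cases according to the supersolution inequality for $w$, which is $\max\{w-m,\min[w-f,-\mathcal Lw]\}\ge0$.

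\textbf{Case $w(s_\alpha,y_\alpha)\ge m(s_\alpha)$.} The subsolution gives $\tilde u(t_\alpha,x_\alpha)\le m(t_\alpha)$. By continuity of $m$ the difference $\tilde u-w$ at these points is $\le o(1)$, which is a contradiction.

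\textbf{Case $w\ge f$ and $-\mathcal Lw\ge0$.} If $\tilde u(t_\alpha,x_\alpha)\le f(t_\alpha)$, the same continuity argument with $f$ gives a contradiction. Otherwise the subsolution gives $-\mathcal L\tilde u\le -\gamma/2$. Subtracting the two inequalities yields $\gamma/2\le \mu(p-p)+\tfrac{\sigma^2}{2}(X-Y)\cdot$(sign conventions) $\le0$, again a contradiction. Letting the perturbations vanish gives $u\le w$.

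The main obstacle is the boundary step: showing that maximizers cannot accumulate at $x=0$ or $t=T$, and in particular at the corner. We handle it exactly through the hypotheses stated in the lemma, with envelopes taken from $\mathcal Q^\circ$.
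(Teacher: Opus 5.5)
Your proof is correct and follows essentially the same route as the paper. Both arguments use a strict perturbation with an $\eta/t$ term and a penalty at spatial infinity, exclude boundary maximizers through the stated envelope inequalities, and conclude by doubling of variables. The differences are cosmetic: you perturb the subsolution by $-\eta/t-\beta\sqrt{1+x^2}-\gamma(T-t)$ and handle the obstacles by a case split at the doubled points, whereas the paper perturbs the supersolution by $\varepsilon e^{\lambda(T-t)}(1+x^2)+\eta/t$ and reads off $f\le w<w^{\varepsilon,\eta}<u\le m$ at the maximum.

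Two small clean-ups. Delete the garbled first display (the term multiplied by $0$). Also, with separate time variables in the penalty, the appropriate tool is the elliptic Crandall--Ishii lemma in $(t,x)$ rather than the parabolic Theorem 8.3; alternatively, keep a common time variable and use the parabolic version.
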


\begin{proof}
Choose $\lambda>0$ sufficiently large that $ \lambda(1+x^2)-2\mu x-\sigma^2>0$, for every $x\geq0$, and define an auxiliary function $\Psi(t,x):=e^{\lambda(T-t)}(1+x^2)$. Then
\[
    -\mathcal L\Psi
    =
    e^{\lambda(T-t)}
    \left[
        \lambda(1+x^2)-2\mu x-\sigma^2
    \right]
    >0.
\]

For $\varepsilon,\eta>0$, set
\[
w^{\varepsilon,\eta}(t,x)=w(t,x)+\varepsilon\Psi(t,x)+\frac{\eta}{t},\qquad 0<t<T.
\]
The additional term satisfies $-\mathcal L(\eta/t)=\eta/t^2>0$. Suppose that a smooth function $\phi$ touches $w^{\varepsilon,\eta}$ from below
at a point where $w^{\varepsilon,\eta}<m$. Then
$\phi-\varepsilon\Psi-\eta/t$ touches $w$ from below at the same point. Since
$w<w^{\varepsilon,\eta}<m$, the supersolution property of $w$ gives
\[
    w-f\geq0,
    \qquad
    -\mathcal L(\phi-\varepsilon\Psi-\eta/t)\geq0.
\]
Therefore,
\[
    w^{\varepsilon,\eta}-f>0,
    \qquad
    -\mathcal L\phi
    =
    -\mathcal L(\phi-\varepsilon\Psi-\eta/t)
    -\varepsilon\mathcal L\Psi+\eta/t^2
    >0.
\]
Thus, $w^{\varepsilon,\eta}$ satisfies the supersolution inequalities strictly whenever it lies below the upper obstacle.
We now argue by contradiction. If  $\sup_{\mathcal Q^\circ}(u-w^{\varepsilon,\eta})>0$, the boundary ordering, the growth of $\Psi$ at spatial infinity, and the term $\eta/t$ near $t=0$ imply that this positive maximum is attained in $\mathcal Q^\circ$. At such a maximum,
\[
    f\leq w<w^{\varepsilon,\eta}<u\leq m.
\]
Hence, $u$ lies strictly above the lower obstacle and $w^{\varepsilon,\eta}$ lies
strictly below the upper obstacle. The standard doubling-of-variables argument (see e.g., \cite{crandall1992users}) applied to the subsolution $u$ and the strict supersolution
$w^{\varepsilon,\eta}$ then yields a contradiction. Therefore, $u\leq w^{\varepsilon,\eta}$ on $\mathcal Q^\circ$. Letting first $\eta\downarrow0$ and then $\varepsilon\downarrow0$ proves $u\leq w$.

\end{proof}

The viscosity characterization and comparison principle now give the two structural
properties needed to define the intervention regions.

\begin{proposition}
\label{prop:intervention-sign}
For $t\in(0,T)$ and $x>0$,
\begin{align}
    v(t,x)=f(t)
    &\quad\Longrightarrow\quad
    f'(t)\leq0,
    \label{eq:dividend-sign}\\
    v(t,x)=m(t)
    &\quad\Longrightarrow\quad
    m'(t)\geq0.
    \label{eq:injection-sign}
\end{align}

\end{proposition}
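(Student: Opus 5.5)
The plan is to read both sign conditions directly off the viscosity characterization in Proposition~\ref{prop:viscosity-characterization}, using the obstacles themselves as test functions. Since $f$ and $m$ belong to $C^1([0,T])$ and do not depend on $x$, the functions $\phi_f(s,y):=f(s)$ and $\phi_m(s,y):=m(s)$ are admissible test functions, with $\mathcal L\phi_f=f'(s)$ and $\mathcal L\phi_m=m'(s)$. The obstacle bounds \eqref{eq:obstacle-bounds} of Proposition~\ref{prop:game-existence} will supply the touching property. Only the sub/supersolution inequalities at a single interior point $(t,x)\in\mathcal Q^\circ$ are needed, so neither the comparison principle nor the boundary limits enter.

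\emph{Dividend contact.} Suppose $v(t,x)=f(t)$ with $t\in(0,T)$ and $x>0$. By \eqref{eq:obstacle-bounds}, $v\ge f$ on a neighbourhood of $(t,x)$ in $\mathcal Q^\circ$. Since $f$ is continuous, this passes to the envelope, $v_*\ge f$. Also $v_*(t,x)\le v(t,x)=f(t)$, hence $v_*(t,x)=f(t)$. Thus $\phi_f$ touches $v_*$ from below at $(t,x)$. The supersolution property of $v_*$ gives
\[
\max\bigl\{v_*(t,x)-m(t),\,\min[v_*(t,x)-f(t),\,-f'(t)]\bigr\}\ge0 .
\]
The first entry equals $f(t)-m(t)<0$ by Assumption~\ref{assumption2_1}\ref{ass:spread}. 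Hence $\min[0,-f'(t)]\ge0$, which gives $f'(t)\le0$. This proves \eqref{eq:dividend-sign}.

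\emph{Financing contact.} Symmetrically, suppose $v(t,x)=m(t)$. By \eqref{eq:obstacle-bounds} and continuity of $m$, we have $v^*\le m$ near $(t,x)$, while $v^*(t,x)\ge v(t,x)=m(t)$. Hence $v^*(t,x)=m(t)$, and $\phi_m$ touches $v^*$ from above at $(t,x)$. The subsolution inequality reads
\[
\max\bigl\{0,\,\min[m(t)-f(t),\,-m'(t)]\bigr\}\le0 .
\]
This forces $\min[m(t)-f(t),-m'(t)]\le0$. Because $m(t)-f(t)>0$, we get $-m'(t)\le0$, that is, $m'(t)\ge0$. This proves \eqref{eq:injection-sign}.

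The main point needing care is the convention in the viscosity definition: the obstacle terms must be evaluated at the envelope value at the touching point. The argument requires $v_*(t,x)=f(t)$ and $v^*(t,x)=m(t)$ exactly, which is why I derive these equalities from the global bounds \eqref{eq:obstacle-bounds} together with continuity of $f$ and $m$. If one preferred to avoid viscosity language, a probabilistic fallback is available. At a dividend contact point, the dividend player's stopping time $\tau=h\wedge\rho$ with $h>0$ small guarantees a payoff of at least about $f(t+h)$, because an earlier financing stop pays $m>f$ and absorption pays $m$. This yields $f(t)=v(t,x)\ge f(t+h)-o(h)$, and hence $f'(t)\le0$. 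The analogous argument, with the financing player waiting a short time $h$, gives $m'(t)\ge0$.
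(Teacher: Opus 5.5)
Your proposal is correct and follows the paper's proof: both use the obstacles $f$ and $m$ themselves as test functions touching $v_*$ from below and $v^*$ from above, and then invoke $m(t)>f(t)$ to isolate the sign of $f'(t)$ or $m'(t)$. Your explicit check that $v_*(t,x)=f(t)$ and $v^*(t,x)=m(t)$ fills in a step the paper leaves implicit, and your probabilistic fallback is essentially the direct stopping argument of Remark~\ref{rem:timing-direct-proof}.
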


\begin{proof}
Suppose first that $v(t,x)=m(t)$. Since $v\leq m$, the function
$(s,y)\mapsto m(s)$ touches $v^*$ from above at $(t,x)$. The viscosity
subsolution property gives
\[
    \max\left\{
        0,\,
        \min\left[m(t)-f(t),-m'(t)\right]
    \right\}
    \leq0.
\]
Because $m(t)>f(t)$, it follows that $m'(t)\geq0$.

Similarly, if $v(t,x)=f(t)$, then $v\geq f$, so
$(s,y)\mapsto f(s)$ touches $v_*$ from below at $(t,x)$. The viscosity
supersolution property gives
\[
    \max\left\{
        f(t)-m(t),\,
        \min\left[0,-f'(t)\right]
    \right\}
    \geq0.
\]
Since $f(t)<m(t)$, this implies $f'(t)\leq0$.
\end{proof}

Note that the same implications \eqref{eq:dividend-sign} and \eqref{eq:injection-sign} also hold at $t=0$, as shown by the direct stopping argument in Remark \ref{rem:timing-direct-proof}.
\begin{remark}
\label{rem:timing-direct-proof}
Suppose that $f'(t)>0$. For sufficiently small $\varepsilon>0$, we have
\[
    f(t+\varepsilon)>f(t),
    \qquad
    \min_{0\leq s\leq\varepsilon}m(t+s)>f(t).
\]
Let the maximizing player choose $\tau_\varepsilon=\varepsilon\wedge S(x)$.
If the minimizing player stops the game before $\tau_\varepsilon$, the payoff is $m(t+\delta)>f(t)$. Otherwise, the payoff is either
$f(t+\varepsilon)>f(t)$ or the payoff associated with absorption before
$t+\varepsilon$, which is again strictly greater than $f(t)$. Hence
$v(t,x)>f(t)$. Now suppose that $m'(t)<0$. Choose $\varepsilon>0$ sufficiently small that
$m$ is strictly decreasing on $[t,t+\varepsilon]$ and
$ c_f:=\max_{0\leq s\leq\varepsilon}f(t+s) <m(t)$. Let the minimizing player choose $\delta_\varepsilon=\varepsilon\wedge S(x)$. For every strategy of the maximizing player, the payoff is bounded above
by $Y := \max\left\{ c_f,\, m\bigl(t+\varepsilon\wedge S(x)\bigr)\right\}$. Because $x>0$ implies $S(x)>0$ almost surely and $m$ is strictly decreasing
on $[t,t+\varepsilon]$, we have $Y<m(t)$ almost surely. Since $Y$ is
bounded, $v(t,x)\leq \E[Y]<m(t)$.
Thus $v(t,x)<m(t)$ when $m'(t)<0$. These arguments also apply  at $t=0$.
\end{remark}

\begin{proposition}[Spatial monotonicity]
\label{prop:monotonicity}
Under Assumption~\ref{assumption2_1}, the function
$x\mapsto v(t,x)$ is nonincreasing for every $t\in[0,T]$.
\end{proposition}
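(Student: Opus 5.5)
We propose a pathwise coupling argument on the game itself; the viscosity machinery is not needed here. Fix $t<T$ and $0<x<y$. Both games are driven by the same Brownian motion, so $A_s(y)=A_s(x)+(y-x)$ pathwise and $S(x)\le S(y)$. Write $\rho_x=H\wedge S(x)$ and $\rho_y=H\wedge S(y)$, so that $\rho_x\le\rho_y$. The aim is to show $v(t,y)\le v(t,x)$. We do this by comparing the upper value at $y$ with the lower value at $x$, using the saddle point of Proposition~\ref{prop:game-existence} at $x$. For $x=0$ the claim is trivial, since $v(t,0)=m(t)\ge v(t,y)$ by \eqref{eq:obstacle-bounds}. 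For $t=T$ it reduces to the monotonicity of $g_x(T,\cdot)$, which follows from concavity of $g(T,\cdot)$.

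\textbf{Step 1 (strategy for the minimizer at $y$).} Let $\delta_x^*$ be the saddle-point minimizer at $x$. In the $y$-game, let the minimizer use $\delta:=\delta_x^*\wedge\rho_x$, which lies in $\Tset_{\rho_y}$, with the modification that on $\{S(x)<H,\ \delta_x^*=\rho_x\}$ she stops at $\rho_x$. That stop is admissible because $\rho_x\le\rho_y$. If moreover $\rho_x<\rho_y$, it yields the payoff $m(t+\rho_x)$, which equals the absorption payoff $Z_{t,x}$ of the $x$-game.

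\textbf{Step 2 (pathwise comparison).} Take any $\tau\in\Tset_{\rho_y}$ for the maximizer at $y$, and set $\tau':=\tau\wedge\rho_x\in\Tset_{\rho_x}$. We claim that, pathwise,
\[
\Gamma_{t,y}(\tau,\delta)\le\Gamma_{t,x}(\tau',\delta_x^*).
\]
We check this case by case.
\begin{enumerate}[label=(\roman*)]
\item If $\tau\le\delta$ and $\tau<\rho_x$, both payoffs equal $f(t+\tau)$.
\item If $\delta<\tau$ and $\delta<\rho_x$, both payoffs equal $m(t+\delta)$.
\item If the $x$-game ends at $\rho_x$ through absorption before $H$, the $x$-payoff is $m(t+\rho_x)$. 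The $y$-payoff is then $f(t+\rho_x)$, $m(t+\rho_x)$, or the absorption payoff $m(t+\rho_y)$ if $\rho_y=\rho_x$. Each of these is at most $m(t+\rho_x)$, using $f<m$.
\item If the $x$-game ends at maturity, i.e.\ $H\le S(x)$, then $\rho_x=\rho_y=H$. The payoffs are $g_x(T,A_H(y))$ and $g_x(T,A_H(x))$, and $A_H(y)\ge A_H(x)$ with $g_x(T,\cdot)$ nonincreasing gives the inequality.
\end{enumerate}
One needs care with the tie-breaking convention (simultaneous stops go to the dividend player) when $\tau=\delta=\rho_x<\rho_y$. In that case the $y$-payoff is $f(t+\rho_x)\le m(t+\rho_x)$, so the inequality still holds.

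\textbf{Step 3 (conclusion).} Taking expectations and the supremum over $\tau$ gives
\[
\overline v(t,y)\le\sup_{\tau'\in\Tset_{\rho_x}}\E[\Gamma_{t,x}(\tau',\delta_x^*)]=v(t,x),
\]
and Proposition~\ref{prop:game-existence} gives $v(t,y)=\overline v(t,y)$.

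The main obstacle is the bookkeeping in Step 2 at the random time $\rho_x$. There the $x$-game is exogenously terminated, while the $y$-game continues. One must verify that the modified minimizer stop is a genuine stopping time: it is, since $\rho_x$ and the event $\{S(x)<H\}$ are known at $\rho_x$. One must also verify that every possible $y$-outcome at or after $\rho_x$ is dominated by $m(t+\rho_x)$, which rests only on $f\le m$ and on the $y$-minimizer having stopped. If one wishes to avoid relying on the saddle point, an alternative route is to use an $\varepsilon$-optimal $\delta$ for $\overline v(t,x)$ and let $\varepsilon\downarrow0$.
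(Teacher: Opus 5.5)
Your proof is correct and follows essentially the same route as the paper. Both arguments use the synchronous coupling $A_s(y)=A_s(x)+(y-x)$, truncate the maximizer's time at $\rho_x$, transplant the minimizer's stopping time from the $x$-game into the $y$-game (with a voluntary stop at $\rho_x$ replacing absorption), and run the same case check using $f<m$ and the monotonicity of $g_x(T,\cdot)$. The only difference is minor: you bound the upper value at $y$ through the saddle-point minimizer at $x$, whereas the paper fixes an arbitrary maximizer time and bounds the lower value using $\Tset_{\rho_x}\subseteq\Tset_{\rho_y}$, which needs no saddle point (as your $\varepsilon$-optimal variant also shows).
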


\begin{proof}
Write $ G(x)=g_x(T,x)$. Since $g(T,\cdot)$ is concave, $G$ is nonincreasing. The assertion at
$t=T$ therefore follows from $v(T,x)=G(x)$.

Fix $t<T$, $x>0$, and $h>0$. Couple the state processes starting from
$x$ and $x+h$ by using the same Brownian motion. Then $ A_s(x+h)=A_s(x)+h$ up to the absorption time
$ S(x)=\inf\{s\geq0:A_s(x)\leq0\}$. In particular, the process starting from $x+h$ has not been absorbed
before $S(x)$.

Let $\rho_x=(T-t)\wedge S(x)$ be the forced termination time of the game starting from $x$. Consider
an arbitrary stopping time $\tau$ of the maximizing player in the game
starting from $x+h$, and define $\tau_x=\tau\wedge\rho_x$. For each admissible stopping time $\delta$ of the minimizing player in
the game starting from $x$, use the same stopping time in the game
starting from $x+h$. In particular, if
$\delta=\rho_x=S(x)<T-t$, this amounts to voluntary stopping by the
minimizing player when the lower process is absorbed. Before $\rho_x$, the two games have identical stopping payoffs because
$f$ and $m$ do not depend on the state. If the lower process is absorbed
before maturity and neither player has stopped earlier, the game starting from $x$ pays $m(t+S(x))$. The coupled game
starting from $x+h$ pays either this amount or, in the case of simultaneous stopping, $f(t+S(x))\le m(t+S(x))$. If instead $\rho_x=T-t$, their terminal payoffs
satisfy
\[
    G\bigl(A_{T-t}(x)+h\bigr)
    \leq
    G\bigl(A_{T-t}(x)\bigr),
\]
because $G$ is nonincreasing. Thus, under the coupled stopping times, the
payoff in the game starting from $x+h$ is no greater than that in the
game starting from $x$.

It follows that, for every admissible $\tau$ in the game starting from
$x+h$,
\[
    \inf_{\delta}\E[\Gamma_{t,x+h}(\tau,\delta)]
    \leq
    \inf_{\delta}\E[\Gamma_{t,x}(\tau_x,\delta)]
    \leq v(t,x).
\]
Taking the supremum over $\tau$ yields
\[
    v(t,x+h)\leq v(t,x).
\]

Finally, when $x=0$, the boundary condition and the upper-obstacle bound
give
\[
    v(t,h)\leq m(t)=v(t,0),\qquad t<T.
\]
Therefore, $x\mapsto v(t,x)$ is nonincreasing for every $t\in[0,T]$.
\end{proof}

For $t<T$, define the intervention thresholds
\begin{equation}\label{eq:lower-boundary-def}
 a(t):=\sup\{x\ge0:v(t,x)=m(t)\},
\end{equation}
\begin{equation}\label{eq:upper-boundary-def}
 b(t):=\inf\{x>0:v(t,x)=f(t)\},
\end{equation}
with $\inf\varnothing=\infty$. The first set in \eqref{eq:lower-boundary-def} is nonempty because it contains zero. Spatial monotonicity in Proposition \ref{prop:monotonicity} implies that the capital injection set is a lower
interval and the dividend set is an upper interval. Denote the
corresponding subsets of $[0,T)\times(0,\infty)$ by
\begin{align*}
   \mathcal S_I&=\{(t,x)\in[0,T)\times(0,\infty):  v(t,x)=m(t)\},\\
   \mathcal S_D&=\{(t,x)\in[0,T)\times(0,\infty): v(t,x)=f(t)\}, \\
    \mathcal C& =\{(t,x)\in[0,T)\times(0,\infty): f(t)<v(t,x)<m(t)\}.
\end{align*}
They are pairwise disjoint and $\mathcal S_I \cup \mathcal S_D\cup \mathcal C =  [0,T)\times(0,\infty)$.  In particular,
whenever $a(t)<b(t)$, we have
\begin{equation}\label{eq:continuation-core}
 (a(t),b(t))\subseteq\{x>0:f(t)<v(t,x)<m(t)\}.
\end{equation}
In addition, Proposition~\ref{prop:intervention-sign} yields
\[
 m'(t)<0\ \Longrightarrow\ a(t)=0,
 \qquad f'(t)>0\ \Longrightarrow\ b(t)=\infty.
\]
Hence, a nonnegative external financing-cost derivative ($m'(t)\ge 0$) is necessary for proactive capital injection control.

The following proposition compares the control values, game values, and intervention boundaries under two financing-cost functions, with all other model inputs held fixed.
\begin{proposition}\label{prop:cost-comparison}
Consider two models with the same $f,g,\mu,\sigma,T$ and financing costs
$m_1\le m_2$ on $[0,T]$, each satisfying
Assumptions~\ref{assumption2_1}--\ref{assumption2_2}. Denote their control
values, game values, and two intervention boundaries by $V_i,v_i,a_i,b_i$, respectively.
Then
\[
 V_1(t,x)\ge V_2(t,x),\qquad v_1(t,x)\le v_2(t,x),\qquad
 b_1(t)\le b_2(t),
\]
where infinite boundary values are allowed. At every $t<T$ such that
$m_1(t)=m_2(t)$, one also has $a_1(t)\le a_2(t)$.
\end{proposition}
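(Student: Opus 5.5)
The four claims follow from monotonicity of the objects in $m$, each proved directly from its definition: the control value from the performance functional, the game value from the payoff $\Gamma$, and the boundaries from the game values. None of this needs the later identification $V_x=v$.

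\emph{Control values.} The admissible set $\A(t,x)$ in Definition~\ref{def:admissible} does not involve $m$, so both models share the same set of controls. For every $(D,I)\in\A(t,x)$ we have $\int m_1(t+s)\dd I_s\le\int m_2(t+s)\dd I_s$, because $I$ is nondecreasing. Hence $J_1(t,x;D,I)\ge J_2(t,x;D,I)$, and taking suprema gives $V_1\ge V_2$. Lemma~\ref{lem:finiteness} guarantees that both values are finite, so the inequality is meaningful.

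\emph{Game values.} Fix $(t,x)$ with $t<T$ and $x>0$. The hitting time $S(x)$, the termination time $\rho$ and the set $\Tset_\rho$ depend only on $\mu,\sigma,T$, so they coincide in the two games. Compare the payoffs in \eqref{eq:game-payoff} pathwise, for every pair $(\tau,\delta)$:
\begin{itemize}[label={}]
\item the dividend payoff $f(t+\tau)$ is common to both games;
\item the financing payoff satisfies $m_1(t+\delta)\le m_2(t+\delta)$;
\item in \eqref{eq:Z}, the absorption term satisfies $m_1(t+\rho)\le m_2(t+\rho)$, and the maturity term $g_x(T,A_H(x)^+)$ is common.
\end{itemize}
Therefore $\Gamma^1_{t,x}(\tau,\delta)\le\Gamma^2_{t,x}(\tau,\delta)$ pointwise. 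Taking expectations, then $\inf_\delta$, then $\sup_\tau$, Proposition~\ref{prop:game-existence} gives $v_1\le v_2$ on $[0,T)\times(0,\infty)$.

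On the extended boundaries in \eqref{eq: game-bound}, $v_i(t,0)=m_i(t)$ and $v_i(T,x)=g_x(T,x)$, so the ordering persists there as well. Alternatively, one could compare via Lemma~\ref{lem:halfline-comparison}: $v_1$ is a subsolution of model 2's problem, since raising the upper obstacle only relaxes the subsolution inequality. The direct pathwise argument is cleaner, so I will use it.

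\emph{Dividend boundary.} By \eqref{eq:obstacle-bounds}, $v_2\ge f$. So if $v_2(t,x)=f(t)$, then $f(t)\le v_1(t,x)\le v_2(t,x)=f(t)$, which forces $v_1(t,x)=f(t)$. The set $\{x>0:v_2(t,x)=f(t)\}$ is thus contained in $\{x>0:v_1(t,x)=f(t)\}$. Taking infima in \eqref{eq:upper-boundary-def} gives $b_1(t)\le b_2(t)$; if the first set is empty, $b_2(t)=\infty$ and the inequality is trivial.

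\emph{Financing boundary.} Now take $t<T$ with $m_1(t)=m_2(t)=:m^\ast$. If $v_1(t,x)=m^\ast$, then $m^\ast=v_1(t,x)\le v_2(t,x)\le m_2(t)=m^\ast$, so $v_2(t,x)=m^\ast$. Hence $\{x\ge0:v_1(t,x)=m_1(t)\}\subseteq\{x\ge0:v_2(t,x)=m_2(t)\}$, and the point $x=0$ belongs to both sets through the boundary convention $v_i(t,0)=m_i(t)$. Taking suprema in \eqref{eq:lower-boundary-def} gives $a_1(t)\le a_2(t)$.

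When $m_1(t)<m_2(t)$, this sandwich fails: $v_1$ touching $m_1$ does not force $v_2$ to touch $m_2$. That is why the statement restricts the financing-boundary comparison to dates where the current costs coincide.

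\emph{Main obstacle.} The only delicate point is checking that the absorption payoff $Z_{t,x}$ in \eqref{eq:Z} is ordered in the same direction as the financing payoff, and that the tie-breaking convention (simultaneous stopping is assigned the dividend payoff $f$) is identical in both games. Once these are confirmed, the pointwise ordering $\Gamma^1\le\Gamma^2$ holds and everything else is immediate.
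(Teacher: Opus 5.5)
Your proposal is correct and follows the paper's proof step for step. The shared admissible set gives $V_1\ge V_2$, the pathwise ordering of the game payoffs (absorption term included) gives $v_1\le v_2$, and the sandwich arguments against the common lower obstacle and, when $m_1(t)=m_2(t)$, the common upper obstacle give the set inclusions behind $b_1\le b_2$ and $a_1\le a_2$; your extra checks of the tie-breaking convention and the extended boundary values fill in details the paper leaves implicit.
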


\begin{proof}
For any fixed pair $(t,x)$, the two models have the same class of admissible controls. For every admissible $(D,I)\in\mathcal{A}(t,x)$,
\[
 J_1(t,x;D,I)-J_2(t,x;D,I)
 =\E\!\left[\int_{[0,T-t]}(m_2(t+s)-m_1(t+s))\,\dd I_s\right]\ge0.
\]
Taking the supremum proves that $V_1(t,x)\ge V_2(t,x)$. Every stopping pair has a weakly larger game payoff under $m_2$, including at absorption; therefore $v_1(t,x)\le v_2(t,x)$. The common lower obstacle gives $\{x>0:v_2(t,x)=f(t)\}\subseteq\{x>0:v_1(t,x)=f(t)\}$,
which implies the ordering of $b_i$. If the current capital injection costs
coincide (i.e., $m_1(t)=m_2(t)$), $v_1\le v_2\le m_2=m_1$ gives
$\{x\ge0:v_1(t,x)=m_1(t)\}\subseteq\{x\ge0:v_2(t,x)=m_2(t)\}$,
and hence we have $a_1(t)\le a_2(t)$.
\end{proof}
Proposition \ref{prop:cost-comparison} shows that higher financing costs weakly reduce shareholder value and weakly increase the game value.  Wherever the marginal-value identity $V_x=v$ is verified, an additional unit of reserves is weakly more valuable because it provides protection against higher future financing costs. If today's financing cost is
unchanged, the lower boundary comparison identifies an incentive to raise
capital earlier (higher $a(t)$). The upper boundary comparison identifies an incentive
to retain reserves longer (higher $b(t)$).

\begin{remark}[The passive capital injection benchmark]\label{rem:FS-benchmark}
If $m'(t)<0$ for every $t<T$, the capital-injection region contains no
positive reserve levels. Thus, the minimizing (capital injection) player chooses $\delta\equiv\rho$ and
\[
 v(t,x)=\sup_{\tau\in\Tset_\rho}
 \E\left[f(t+\tau)\ind_{\{\tau<\rho\}}
                +Z_{t,x}\ind_{\{\tau=\rho\}}\right].
\]
This recovers the marginal stopping structure of the passive-injection benchmark in \citet{ferrari2019optimal}. Proposition~\ref{prop:intervention-sign} gives $a(t)=0$ whenever $m'(t)<0$. In this scenario, under the verification conditions in
	Section~\ref{sec:value-characterization}, an optimal policy
	therefore combines capital injection at zero with dividend
	reflection at the upper boundary $b(t)$.
\end{remark}
\subsection{The regular intervention-band condition}

The preceding results characterize the Dynkin game and its two stopping
regions under the primitive assumptions. We now impose the additional
regularity needed to pass from this marginal stopping game back to the
original dividend and capital injection control problem. The two parts of
the assumption below have distinct roles. Condition~\ref{ass:B1} makes the
stopping boundaries the edges of a continuous, separated intervention
band. This permits the reflected control policy to be constructed in
Section~\ref{sec:value-characterization}. Condition~\ref{ass:B2} provides the regularity needed there to
apply the generalized It\^o--Krylov formula and to recover the control value
by spatially integrating the game value. Appendix~\ref{app:canonical-verification} verifies both conditions for the canonical class considered in Section~\ref{sec:canonical}.

\begin{assumption}\label{assumption3_1}
\begin{enumerate}[label=(B\arabic*)]
\item\label{ass:B1}
The boundaries $a,b$ are finite and continuous on $[0,T)$ and have finite
limits $a_T,b_T$ at $T$, with
\[
 0\le a(t)<b(t)<\infty\quad(t<T),\qquad a_T<b_T.
\]
\item\label{ass:B2}
For some $p>3$,
\[
 v\in C(\mathcal Q)\cap
 W^{1,2}_{p,\mathrm{loc}}(\mathcal Q^\circ),
\]
where $W^{1,2}_{p,\mathrm{loc}}$ denotes the standard local parabolic Sobolev space.
\end{enumerate}
\end{assumption}

We record the consequences of this assumption that will be used in Section~\ref{sec:value-characterization}.

\paragraph{The stopping and continuation regions.}
By the definitions of $a$ and $b$, the spatial monotonicity of $v$, and
the continuity in \ref{ass:B2}, the contact sets at every $t<T$ are
\begin{equation}\label{eq:contact-sets}
 \{x\geq0:v(t,x)=m(t)\}=[0,a(t)],
 \qquad
 \{x>0:v(t,x)=f(t)\}=[b(t),\infty).
\end{equation}
Consequently,
\[
 \mathcal C=\{(t,x):t<T,\ a(t)<x<b(t)\}.
\]
Thus, the marginal game continues while the uncontrolled reserve remains
strictly between the two boundaries. Contact with $b$ corresponds to the
dividend stopping region, and contact with $a$ corresponds to the capital
injection stopping region.

\paragraph{Spatial smooth fit.}
Since the spatial dimension is one and $p>3$, parabolic Sobolev embedding
gives a locally continuous representative of $v_x$ on
$(0,T)\times(0,\infty)$, including across the two endogenous boundaries.
For fixed $t$, the function $v(t,\cdot)$ is constant on the interior of
each contact interval in \eqref{eq:contact-sets}. Continuity of $v_x$
therefore gives
\begin{equation}\label{eq:smooth-fit}
 v_x(t,b(t)-)=0,
 \qquad
 v_x(t,a(t)+)=0\quad\text{if }a(t)>0,
 \qquad 0<t<T.
\end{equation}
The second identity concerns an endogenous lower boundary inside the state
space. We make no smooth-fit assertion at the fixed boundary $x=0$.
Whenever an argument starts at $t=0$, we first start at a positive time and
then pass to the limit. Appendix~\ref{app:canonical-verification} proves continuity of $v_x$ at $t=0$ for
the canonical class.

\paragraph{The equation used in the verification argument.}
Proposition~\ref{prop:viscosity-characterization} establishes the obstacle equation \eqref{eq:double-obstacle} in the viscosity sense under Assumptions~\ref{assumption2_1}--\ref{assumption2_2}. Under \ref{ass:B2}, the equation also holds almost everywhere, in terms of weak derivatives, in the interior of each region. In the continuation region, the equation reduces to $\mathcal Lv=0$. In the two contact regions, $v=m$ and $v=f$, respectively. Hence
\[
 \mathcal Lv(t,x)=
 \begin{cases}
  m'(t),&0<x<a(t),\\[2pt]
  0,&a(t)<x<b(t),\\[2pt]
  f'(t),&x>b(t),
 \end{cases}
 \qquad\text{for almost every }(t,x) \in\mathcal Q^\circ.
\]
The two free-boundary curves
\[
\{(t,a(t)):0\le t<T\} \qquad \text{and}\qquad \{(t,b(t)):0\le t<T\}
\]
have two-dimensional Lebesgue measure zero, so the values of $\mathcal Lv$ on these curves do not affect this identity. This is the form of $\mathcal Lv$ substituted into the It\^o--Krylov formula in
Proposition~\ref{prop:v-representation} and later integrated in the spatial variable in Section \ref{sec:value-characterization}. The boundary
and terminal values remain those in \eqref{eq: game-bound}. Since $(T,0)$ is excluded from $\mathcal Q$, no compatibility between the prescribed solvency-boundary data and terminal data is required at that point.

\paragraph{The saddle point in boundary form.}
Proposition~\ref{prop:game-existence} gives the saddle point as the first times at which the game
value meets either obstacle. Using \eqref{eq:contact-sets}, these times can
now be written directly in terms of the two boundaries:
\begin{align}
 \tau^*(t,x)
 &=\inf\{s\in[0,\rho):A_s(x)\geq b(t+s)\}\wedge\rho,
 \label{eq:upper-hit}\\
 \delta^*(t,x)
 &=\inf\{s\in[0,\rho):A_s(x)\leq a(t+s)\}\wedge\rho.
 \label{eq:lower-hit}
\end{align}
Thus $\tau^*$ is the first entry of the uncontrolled reserve into the
dividend region, whereas $\delta^*$ is its first entry into the capital
injection region. In particular,
\[
 \mathbb E[\Gamma_{t,x}(\tau,\delta^*)]
 \leq \mathbb E[\Gamma_{t,x}(\tau^*,\delta^*)]
 =v(t,x)
 \leq \mathbb E[\Gamma_{t,x}(\tau^*,\delta)],
 \qquad \tau,\delta\in\mathcal T_\rho.
\]
These are stopping rules for the marginal game, whose state process is
$A(x)$. In Section~\ref{sec:value-characterization}, the same curves $a$ and $b$ become the lower and upper
reflecting boundaries for the controlled reserve process. This is the link
between the Dynkin game and the double-reflected control policy.

\section{From the Dynkin Game to the Optimal Dividend and Financing Policy}
\label{sec:value-characterization}

The game identifies the marginal value of reserves and the levels at which
the firm should intervene.  To recover the original dividend and capital injection control value itself, we first derive an
occupation-time representation of the game value, then construct the
reflected policy and verify its optimality.

\subsection{Representation and boundary equations}
\label{subsec:v-representation}

\begin{proposition}\label{prop:v-representation}
Under Assumptions~\ref{assumption2_1} and \ref{assumption3_1}, let
$t<T$, $x>0$, $H=T-t$, and $\rho=H\wedge S(x)$.  Then
\begin{align}
v(t,x)=\E\Bigg[&m(t+\rho)\ind_{\{S(x)<H\}}
 +g_x(T,A_H(x)^+)\ind_{\{H\le S(x)\}}\label{eq:v-representation}\\
&-\int_0^\rho f'(t+s)
       \ind_{\{A_s(x)\ge b(t+s)\}}\dd s\nonumber\\
&-\int_0^\rho m'(t+s)
       \ind_{\{0<A_s(x)\le a(t+s)\}}\dd s\Bigg].\nonumber
\end{align}
\end{proposition}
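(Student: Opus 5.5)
We apply the generalized It\^o--Krylov formula to $v(t+s,A_s(x))$ on $[0,\rho]$, using the regularity in \ref{ass:B2} together with the almost-everywhere identity for $\mathcal Lv$ recorded after Assumption~\ref{assumption3_1}. Since $v\in W^{1,2}_{p,\mathrm{loc}}(\mathcal Q^\circ)$ with $p>3$ and $A$ is a nondegenerate diffusion, Krylov's estimate ensures that the Lebesgue-null set where $\mathcal Lv$ is not given by that identity (including the two boundary curves) is visited for zero occupation time. We therefore plan to obtain, for a localizing stopping time $\theta$ that keeps the process in a compact subset of $\mathcal Q^\circ$,
\[
v(t+\theta,A_\theta(x))=v(t,x)+\int_0^\theta \mathcal Lv(t+s,A_s(x))\dd s+\int_0^\theta\sigma v_x(t+s,A_s(x))\dd W_s .
\]
On $\{0<A_s<a(t+s)\}$ the integrand equals $m'(t+s)$, on $\{a<A_s<b\}$ it vanishes, and on $\{A_s>b(t+s)\}$ it equals $f'(t+s)$. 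The boundary sets $\{A_s=a(t+s)\}$ and $\{A_s=b(t+s)\}$ have zero occupation time, so the weak versus strict inequalities appearing in \eqref{eq:v-representation} are immaterial.

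For the localization we take $\theta_n=\inf\{s:(t+s,A_s(x))\notin[t,T-1/n]\times[1/n,n]\}\wedge(H-1/n)$. On the cylinder $[t,T-1/n]\times[1/n,n]$, Sobolev embedding makes $v_x$ bounded, so the stochastic integral is a true martingale with zero mean. Taking expectations and rearranging gives
\[
v(t,x)=\E\Big[v(t+\theta_n,A_{\theta_n})-\int_0^{\theta_n}\big(f'\ind_{\{A_s\ge b\}}+m'\ind_{\{0<A_s\le a\}}\big)(t+s)\dd s\Big].
\]

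We then let $n\to\infty$, so that $\theta_n\uparrow\rho$ almost surely; the case of escape to spatial infinity before $H$ has probability tending to zero. The time integral converges by dominated convergence, since $f'$ and $m'$ are bounded on $[0,T]$ and $\rho\le T$. For the first term, $v$ is bounded by $\max m$, and continuity of $v$ on $\mathcal Q$ from \ref{ass:B2}, combined with the boundary values \eqref{eq: game-bound}, gives $v(t+\theta_n,A_{\theta_n})\to m(t+S(x))$ on $\{S(x)<H\}$ and $\to g_x(T,A_H(x))$ on $\{H<S(x)\}$. The event $\{S(x)=H\}$ is null, so bounded convergence then yields \eqref{eq:v-representation}.

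The main obstacle we expect is this terminal passage near maturity. The process may approach the corner $(T,0)$, where $v$ need not be continuous (Remark~\ref{rem:terminal-corner}), and the localization must avoid that corner. We plan to handle it by observing that convergence to the corner requires $A_H(x)=0$ or $S(x)=H$, both of which are null events for Brownian motion with drift. Hence almost surely the limit point $(t+\rho,A_\rho)$ lies in $\mathcal Q$, where $v$ is continuous, and the pointwise limits above hold almost surely. A secondary point is justifying It\^o--Krylov for $W^{1,2}_{p}$ functions with $p>3$ rather than $C^{1,2}$ functions; for this we cite the standard result, which requires $p\ge d+1=2$ and is therefore amply satisfied here.
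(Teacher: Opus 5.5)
Your proposal is correct and follows the paper's proof essentially step for step. You use a localized It\^o--Krylov formula, the regionwise identity for $\mathcal Lv$ with zero occupation time on the boundary curves, and a passage to the limit via $v\in C(\mathcal Q)$ and the null event $\{S(x)=H\}$. The one difference is that you merge the paper's two limits (first $n\to\infty$ for the spatial localization, then $\varepsilon\downarrow0$ for the truncation $\rho\wedge(H-\varepsilon)$) into a single index, which is harmless.

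You do omit the case $t=0$. There your cylinder $[t,T-1/n]\times[1/n,n]$ is not compactly contained in $\mathcal Q^\circ$, so neither the formula nor the bound on $v_x$ is available there. As the paper does, apply the formula from elapsed time $h>0$ and let $h\downarrow0$, using the continuity and boundedness of $v$.
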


\begin{proof}
First, fix $t\in(0,T)$, and for $0<\varepsilon<H$, set $\rho_\varepsilon:=\rho\wedge(H-\varepsilon).$ For $n\ge1$, introduce the spatial localization time
\[
\zeta_n:=\rho_\varepsilon\wedge\inf\left\{s\ge0:A_s(x)\notin(\frac{1}{n},n)\right\}.
\]
Then, on $[0,\zeta_n]$, the process
$(t+s,A_s(x))$ remains in a compact subset of
$(0,T)\times(0,\infty)$.  Since
\[
v\in W^{1,2}_{p,\mathrm{loc}}
((0,T)\times(0,\infty)),
\qquad p>3,
\]
Assumption~\ref{assumption3_1}\ref{ass:B2} allows us to apply the generalized It\^o--Krylov formula (see, e.g., \citet{krylov2008controlled}) to $v(t+s,A_s(x))$.  Recalling that $\dd A_s(x)=\mu\dd s+\sigma\dd W_s,$
we obtain
\begin{align}\label{eq:ito-krylov-local}
v(t+\zeta_n,A_{\zeta_n}(x))-v(t,x)
={}&
\int_0^{\zeta_n}
\mathcal L v(t+s,A_s(x))\dd s+
\sigma\int_0^{\zeta_n}
v_x(t+s,A_s(x))\dd W_s,
\end{align}
where $\mathcal L=\partial_t+\mu\partial_x+(\sigma^2/2)\partial_{xx}$. By parabolic Sobolev embedding, $v_x$ has a continuous representative
and is bounded on the compact set before $\zeta_n$.  Hence, the second integral on the right-hand side of \eqref{eq:ito-krylov-local} is square integrable and has zero expectation.  Taking expectations on both sides of \eqref{eq:ito-krylov-local} yields
\begin{equation}\label{eq:v-ito-krylov-local}
\E\!\left[v(t+\zeta_n,A_{\zeta_n}(x))\right]-v(t,x)=\E\!\left[\int_0^{\zeta_n}\mathcal L v(t+s,A_s(x))\dd s
\right].
\end{equation}
Since $A_{\cdot}(x)$ has continuous paths, $\zeta_n\uparrow\rho_\varepsilon$ almost surely as $n\to\infty$. The double obstacle equation and the stopping region characterization imply that $v=m$ on the capital-injection region and $v=f$ on the dividend region, whereas $f<v<m$ in the continuation region.
Then, we have for almost every $(s,y)$,
\begin{equation}\label{eq:Lv-regions}
\mathcal Lv(t+s,y)
=
\begin{cases}
m'(t+s),
   & 0<y<a(t+s),\\[1mm]
0,
   & a(t+s)<y<b(t+s),\\[1mm]
f'(t+s),
   & y>b(t+s).
\end{cases}
\end{equation}

Local Sobolev regularity justifies the stopped
It\^o--Krylov formula, whose drift involves the combined
quantity $\mathcal Lv$. The preceding identity in \eqref{eq:Lv-regions}
yields
$|\mathcal Lv|\le K_L :=\max\{\|m'\|_\infty,\|f'\|_\infty\}$ almost everywhere.
Since the unstopped diffusion $A_s(x)$ has a Gaussian
density for every $s>0$, Tonelli's theorem implies that
this bound holds along $(t+s,A_s(x))$ for
$\mathrm ds\otimes\mathbb P$-almost every $(s,\omega)$.
Therefore,
\[
 \int_0^{\zeta_n}
 |(\mathcal Lv)(t+s,A_s(x))|\,\mathrm ds
 \le K_L\zeta_n\le K_LT
 \quad\text{almost surely}.
\]

Letting $n\to\infty$ in \eqref{eq:v-ito-krylov-local} and applying the dominated convergence theorem gives
\begin{equation}\label{eq:v-ito-krylov}
\E\!\left[
v(t+\rho_\varepsilon,A_{\rho_\varepsilon}(x))
\right]
-v(t,x)
=
\E\!\left[
\int_0^{\rho_\varepsilon}
\mathcal Lv(t+s,A_s(x))\dd s
\right].
\end{equation}
If $t=0$, apply the same argument starting at elapsed time $h>0$, with all processes stopped at $\rho_\varepsilon$, and let $h\downarrow0$; continuity and boundedness give the required identity.

For each fixed $s>0$, the random variable $A_s(x)$ has a density; hence, $\mathbb{P}(A_s(x)=a(t+s))=\mathbb{P} (A_s(x)=b(t+s))=0.$
Fubini's theorem then shows that, almost surely, $(t+s,A_s(x))$ lies on either deterministic boundary curve only for a set of times $s$ of Lebesgue measure zero.  Thus, the values assigned to the indicators at equality do not affect the time integrals in \eqref{eq:v-representation}.
Substituting \eqref{eq:Lv-regions} into
\eqref{eq:v-ito-krylov} and rearranging yields
\begin{align}
v(t,x)=\E\Bigg[v(t+\rho_\varepsilon,A_{\rho_\varepsilon}(x))& -\int_0^{\rho_\varepsilon}
f'(t+s)
\ind_{\{A_s(x)\ge b(t+s)\}}\dd s
\label{eq:v-representation-epsilon}\\
&-
\int_0^{\rho_\varepsilon}
m'(t+s)
\ind_{\{0<A_s(x)\le a(t+s)\}}\dd s
\Bigg].
\nonumber
\end{align}
It remains to let $\varepsilon\downarrow0$.  On the event
$\{S(x)<H\}$, one has $\rho_\varepsilon=S(x)$ for all sufficiently small
$\varepsilon$, and hence $v(t+\rho_\varepsilon,A_{\rho_\varepsilon}(x))=v(t+S(x),0)=m(t+S(x))$.
On $\{H<S(x)\}$, $A_{H-\varepsilon}(x)\longrightarrow A_H(x)>0$. Since $v\in C(\mathcal Q)$ with $\mathcal Q=([0,T]\times[0,\infty))\setminus\{(T,0)\}$, and $v(T,y)=g_x(T,y)$ for $y>0$, it follows that
\[
v(T-\varepsilon,A_{H-\varepsilon}(x))
\longrightarrow
g_x(T,A_H(x)),\qquad \text{as }\quad  \varepsilon\downarrow0.
\]
For $x>0$, the event $\{S(x)=H\}$ has probability zero, so the possible
incompatibility at the point $(T,0)$ does not affect the limiting expectation in \eqref{eq:v-representation}.  This is also consistent with the convention in \eqref{eq:Z} that gives priority to maturity.
Finally, $v$, $f'$ and $m'$ are bounded on the finite horizon.  Applying the dominated convergence theorem to \eqref{eq:v-representation-epsilon} as $\varepsilon\downarrow 0$ yields \eqref{eq:v-representation}.
\end{proof}

Under Assumption~\ref{assumption3_1}, the contact sets are closed with $v(t,b(t))=f(t)$, and $v(t,a(t))=m(t)$ whenever $a(t)>0$. Substituting $x=b(t)$ and $x=a(t)$, respectively, into \eqref{eq:v-representation} gives:
\begin{align}
f(t)
=
\E\Bigg[&
m\big(t+(H\wedge S(b(t)))\big)
\ind_{\{S(b(t))<H\}}
+
g_x\big(T,A_H(b(t))^+\big)
\ind_{\{H\le S(b(t))\}}
\label{eq:b-integral}\\
&\quad
-\int_0^{H\wedge S(b(t))}
f'(t+s)
\ind_{\{A_s(b(t))\ge b(t+s)\}}\dd s
\nonumber\\
&\quad
-\int_0^{H\wedge S(b(t))}
m'(t+s)
\ind_{\{0<A_s(b(t))\le a(t+s)\}}\dd s
\Bigg].
\nonumber
\end{align}
Whenever $a(t)>0$, the lower boundary satisfies
\begin{align}
m(t)
=
\E\Bigg[&
m\big(t+(H\wedge S(a(t)))\big)
\ind_{\{S(a(t))<H\}}
+
g_x\big(T,A_H(a(t))^+\big)
\ind_{\{H\le S(a(t))\}}
\label{eq:a-integral}\\
&\quad
-\int_0^{H\wedge S(a(t))}
f'(t+s)
\ind_{\{A_s(a(t))\ge b(t+s)\}}\dd s
\nonumber\\
&\quad
-\int_0^{H\wedge S(a(t))}
m'(t+s)
\ind_{\{0<A_s(a(t))\le a(t+s)\}}\dd s
\Bigg].
\nonumber
\end{align}

Equations~\eqref{eq:b-integral}--\eqref{eq:a-integral} form a coupled
system because the occupation terms in each equation depend on both moving
boundaries.  These equations give necessary conditions for the regular boundaries $a$ and $b$; uniqueness of their solutions requires further analysis. They also provide a useful probabilistic description of the two moving boundaries. When $a(t)=0$, Eq. \eqref{eq:a-integral} reduces to the exogenous solvency boundary condition and does not supply a separate integral equation.

\subsection{The reflected policy and the value function}

Extend $a,b$ continuously to $T$ by $a(T)=a_T$, $b(T)=b_T$.
Assumption~\ref{assumption3_1} gives
\[
\inf_{0\le s\le T}(b(s)-a(s))>0.
\]
For initial state $(t,x)$, set $H=T-t$.  The time-dependent Skorokhod
problem defines a controlled reserve process and two nondecreasing regulators by
\begin{equation}\label{eq:skorokhod}
X_s=x+\mu s+\sigma W_s-D_s^b+I_s^a
       \in[a(t+s),b(t+s)],\qquad 0\le s\le H,
\end{equation}
with initial intervention
\begin{equation}\label{eq:projection}
\Delta I_0^a=(a(t)-x)^+,\qquad
\Delta D_0^b=(x-b(t))^+,
\end{equation}
and complementarity conditions
\begin{equation}\label{eq:complementarity}
\int_{(0,H]}(X_s-a(t+s))\dd I_s^a=0,\qquad
\int_{(0,H]}(b(t+s)-X_s)\dd D_s^b=0.
\end{equation}
Existence, pathwise uniqueness, and finite variation follow from the
time-dependent Skorokhod map; see, e.g.,
\citet[Theorem~2.6 and Corollary~2.4]{burdzy2009skorokhod}.
The map is nonanticipating, so the controls are adapted.  Since the
driving process and both boundaries are continuous, the regulators are
continuous after the possible initial intervention, including at maturity. Finally, the integrability required for admissibility (see Definition \ref{def:admissible}) is established in the following proposition.

\begin{proposition}\label{prop:sk-admissible}
Under Assumption~\ref{assumption3_1}, the reflected control policy $(D^b,I^a)\in\A(t,x)$.
\end{proposition}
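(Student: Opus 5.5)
Most admissibility requirements in Definition~\ref{def:admissible} are already supplied by the Skorokhod construction \eqref{eq:skorokhod}--\eqref{eq:complementarity} and the cited results of \citet{burdzy2009skorokhod}. The regulators $D^b,I^a$ are nondecreasing, adapted (the map is nonanticipating), càdlàg (continuous after time zero) and finite-valued, with $D^b_{0-}=I^a_{0-}=0$. Solvency holds because $X_s\ge a(t+s)\ge0$ on $[0,H]$. The only substantive point is therefore $\E[D^b_H+I^a_H]<\infty$. The initial jumps \eqref{eq:projection} are deterministic and finite, so it suffices to bound the expected increments on $(0,H]$.

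The plan is to dominate each regulator by a quantity with finite expectation, using the uniform gap $\varepsilon_0:=\inf_{0\le s\le T}(b(s)-a(s))>0$ and the uniform continuity of $a,b$ on $[0,T]$ from \ref{ass:B1}. First I rewrite \eqref{eq:skorokhod} as $D^b_s-I^a_s=x+\Delta I^a_0-\Delta D^b_0+\mu s+\sigma W_s-X_s$. Since $X$ is confined to the bounded band $[0,\max b]$, this gives
\[
|D^b_s-I^a_s|\le C\bigl(1+\sup_{r\le H}|W_r|\bigr),
\]
which has finite expectation. This controls only the difference of the regulators, so the remaining task is to bound one of them.

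Next I partition $[0,H]$ into successive excursion epochs. Let $\theta_0=0$, and let $\theta_{k+1}$ be the first time after $\theta_k$ at which $X$ passes from the lower boundary to the upper one or back. By the complementarity conditions \eqref{eq:complementarity}, only one regulator acts on each interval $(\theta_k,\theta_{k+1}]$. On such an interval the active regulator's increment is at most $\mathrm{osc}_{[\theta_k,\theta_{k+1}]}(\mu r+\sigma W_r)+\mathrm{osc}(a)+\mathrm{osc}(b)$, which follows from the one-sided Skorokhod bound for the time-dependent reflection. Summing these increments bounds $D^b_H+I^a_H$ by $N\cdot\bigl(2\sup_{r\le H}|\mu r+\sigma W_r|+C\bigr)$, where $N$ is the number of crossings. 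Each crossing requires the free path $\mu r+\sigma W_r$, combined with the boundary movement, to traverse a distance of at least $\varepsilon_0/2$ once the time steps are small enough that the oscillation of $a,b$ is below $\varepsilon_0/4$. Hence $N\le 1+M$, where $M$ is the number of $\varepsilon_0/4$-oscillations of Brownian motion with drift on $[0,T]$.

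The main obstacle is the integrability of $N\cdot\sup|W|$. I would handle it by Cauchy--Schwarz. The number $M$ of $\delta$-oscillations of a Brownian motion with drift on a finite interval has all exponential moments: it is dominated by the number of i.i.d.\ exit times of an interval of width $\delta$ needed to exhaust $[0,T]$, and the tail of that count decays like a Poisson tail. Since $\sup|W|$ has Gaussian moments, the product has finite expectation. As a fallback, it would be enough to apply Itô's formula to a smooth function $\phi$ with $\phi'=-1$ near $a$ and $\phi'=+1$ near $b$, which is possible thanks to the positive gap. That gives $D^b_H+I^a_H=\phi(X_H)-\phi(X_0)-\int_0^H\mathcal \phi'(X_s)\,\dd (\mu s+\sigma W_s)-\int_0^H\tfrac{\sigma^2}{2}\phi''(X_s)\,\dd s$, up to corrections from the motion of the boundaries. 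Those corrections are the delicate part, since $a,b$ are merely continuous. To deal with them I would first approximate $a,b$ by smooth curves with the same gap, obtain a bound uniform in the approximation, and pass to the limit using stability of the Skorokhod map.
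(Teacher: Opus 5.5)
Your main argument is correct in outline, but it takes a genuinely different route from the paper. You bound $D^b_H+I^a_H$ pathwise by the number of boundary-to-boundary crossings times (oscillation of $\mu r+\sigma W_r$ plus a constant), and then get integrability from the tails of the crossing count and Cauchy--Schwarz.

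One piece of bookkeeping needs fixing. The bound $N\le 1+M$ is not right as stated. Let $\lambda$ be the last contact with $a$ before the contact with $b$ at time $\theta$. If $\theta-\lambda$ is long, the band itself can move, so $b(t+\theta)-a(t+\lambda)$ may be small or even negative. The crossing then need not involve any free-path displacement of order $\varepsilon_0$. The fix is to take a deterministic grid of $J$ steps on which $a$ and $b$ oscillate by less than $\varepsilon_0/4$:
\begin{itemize}
\item at most $J$ of the disjoint intervals $(\lambda_k,\theta_{k+1}]$ contain a grid point in their interior;
\item every other interval forces a free increment of size at least $3\varepsilon_0/4$.
\end{itemize}
This gives $N\le 1+J+M$, which does not affect integrability.

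The paper uses exactly your fallback, but in a form where the step you call delicate never arises. Uniform continuity and the positive gap give a finite deterministic partition $0=s_0<\dots<s_J=H$ on which the ranges of $a(t+\cdot)$ and $b(t+\cdot)$ are disjoint compact intervals. On each $[s_j,s_{j+1}]$ one takes a \emph{time-independent} $C^2$ function $h_j$ with bounded derivatives, $h_j'=1$ on the range of $a$ and $h_j'=-1$ on the range of $b$. It\^o's formula for $h_j(X)$ and complementarity then give an exact identity for $I^a_{s_{j+1}}-I^a_{s_j}+D^b_{s_{j+1}}-D^b_{s_j}$. Its right-hand side is integrable because $X$ stays in a bounded interval. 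There are no boundary-motion terms, no smoothing of $a,b$, and no appeal to stability of the Skorokhod map.

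Note also a sign slip in your fallback formula. With $\phi'=-1$ near $a$ and $\phi'=+1$ near $b$, the left-hand side is $-(D^b_H+I^a_H)$; your displayed formula corresponds to the opposite convention, which is the paper's.

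The paper's route is shorter and needs no tail estimate for crossing counts. Yours is more explicitly pathwise, relating the regulators to Brownian oscillation counts, at the cost of that tail estimate and the extra bookkeeping.
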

\begin{proof}
Only integrability remains to be proved; the other admissibility conditions follow from the construction above. In particular, the complementarity conditions in \eqref{eq:complementarity} imply that, after a possible initial intervention, $D^b$ can increase only when $X_s=b(t+s)$, whereas $I^a$ can increase only when $X_s=a(t+s)$.  Since $a(t+s)<b(t+s)$, the two regulators cannot increase simultaneously.  The initial intervention in \eqref{eq:projection} is admissible because at most one of the two initial jumps is positive, and the post-intervention state belongs to $[a(t),b(t)]\subset[0,\infty)$.

The uniform continuity of $a$ and $b$, together with their positive separation, gives a finite partition $0=s_0<\cdots<s_J=H$ such that, on each $[s_j,s_{j+1}]$, the ranges of
$a(t+\cdot)$ and $b(t+\cdot)$ are disjoint compact sets.  Choose a
$C^2$ function $h_j$ with bounded derivatives such that $h_j'=1$ on the
first range and $h_j'=-1$ on the second.  It\^o's formula and
\eqref{eq:complementarity} give
\begin{align*}
I^a_{s_{j+1}}-I^a_{s_j}+D^b_{s_{j+1}}-D^b_{s_j}
={}&h_j(X_{s_{j+1}})-h_j(X_{s_j})\\
&-\int_{s_j}^{s_{j+1}}
 \left(\mu h_j'(X_s)+\frac{\sigma^2}{2}h_j''(X_s)\right)\dd s
 -\sigma\int_{s_j}^{s_{j+1}}h_j'(X_s)\dd W_s.
\end{align*}
The right-hand side is integrable, since $X$ remains in a deterministic bounded
interval and the stochastic integrand is bounded.  Summing and adding
the finite initial intervention proves $\E[D_H^b+I_H^a]<\infty$.
\end{proof}

Define the payoff of this policy by
\begin{equation}\label{def:Mtx}
M(t,x):=J(t,x;D^b,I^a).
\end{equation}
We now state the main theorem.
\begin{theorem}\label{thm:theorem-main}
Under Assumptions~\ref{assumption2_1}, \ref{assumption2_2}, and
\ref{assumption3_1}, the reflected policy $(D^b,I^a)$ is optimal and
$V(t,x)=M(t,x)$ for every $t<T$ and $x\ge0$.  Moreover,
\[
V_x(t,x)=v(t,x)\quad(x>0),\qquad V_x(t,0+)=m(t),
\]
and
\begin{align}
V(t,x)&=V(t,b(t))-\int_x^{b(t)}v(t,y)\dd y,\label{V-v-b}\\
V(t,x)&=V(t,a(t))+\int_{a(t)}^xv(t,y)\dd y.\label{V-v-a}
\end{align}
The additive term $V(t,b(t))$ is determined explicitly by
\begin{equation}\label{eq:upper-boundary-value-main}
V(t,b(t))=g(T,b_T)+\mu\int_t^T f(s)\dd s
  +f(t)b(t)-f(T)b_T+\int_t^T f'(s)b(s)\dd s,
\end{equation}
and then
\begin{equation}\label{eq:lower-boundary-value-main}
V(t,a(t))=V(t,b(t))-\int_{a(t)}^{b(t)}v(t,y)\dd y.
\end{equation}
At maturity, $V(T,x)=g(T,x)$.
\end{theorem}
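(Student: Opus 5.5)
We construct a candidate $U$ by spatially integrating the game value, check that it is a verification function, and identify it with $M$ through the Skorokhod policy.

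\textbf{Step 1 (candidate).} For $t<T$ define
\[
U(t,x):=\Lambda(t)-\int_x^{b(t)}v(t,y)\dd y,\qquad x\ge0,
\]
where $\Lambda(t)$ is the right-hand side of \eqref{eq:upper-boundary-value-main}. Set $U(T,x)=g(T,x)$.

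Directly, $U_x=v$, which lies in $[f,m]$ by \eqref{eq:obstacle-bounds}, and $U_x(t,0+)=m(t)$ by \eqref{eq: game-bound}. To get the time derivative, differentiate $U$ at fixed $x$. Since $v(t,b(t))=f(t)$, we obtain
\[
U_t=\Lambda'(t)-f(t)b'(t)+\int_x^{b(t)}v_t\dd y .
\]
We then replace $v_t=\mathcal Lv-\mu v_x-\tfrac{\sigma^2}{2}v_{xx}$ and integrate in $y$. The smooth fit \eqref{eq:smooth-fit} gives $v_x(t,b(t)-)=0$, and \eqref{eq:Lv-regions} identifies $\mathcal Lv$. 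The choice of $\Lambda$ is exactly what makes $\mathcal LU=0$ at $x=b(t)$, since $\Lambda'(t)=-\mu f(t)+f'(t)b(t)+f(t)b'(t)-f'(t)b(t)$.

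This yields $\mathcal LU=\int_x^{b(t)}\mathcal Lv\dd y$, to be read as $-\int_{b(t)}^{x}\mathcal Lv\dd y$ when $x>b(t)$. The sign bookkeeping of the boundary terms has to be done carefully. From \eqref{eq:Lv-regions} and the sign restrictions of Proposition~\ref{prop:intervention-sign}:
\begin{itemize}
\item on $[a(t),b(t)]$, $\mathcal LU=0$;
\item for $x>b(t)$, $\mathcal LU=-(x-b(t))f'(t)\le0$, because $f'\le0$ wherever $b<\infty$;
\item for $x<a(t)$, $\mathcal LU=\int_x^{a(t)}m'(t)\dd y\ge0$.
\end{itemize}
The last sign is wrong for a supersolution. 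To handle it, replace the pointwise requirement by the HJB inequality $\max\{\mathcal LU,\,f-U_x,\,U_x-m\}\le0$, interpreted only through the Itô--Krylov decomposition along admissible paths. I would try to show that below $a$ the control makes the gain from $\mathcal LU>0$ compensated. The cleaner route is to derive $U\ge J$ by the classical integration-by-parts identity that \eqref{eq:v-representation} encodes. This is the main obstacle, and I return to it in Step 3.

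\textbf{Step 2 (optimality of the reflected policy, $U=M$).} Apply the generalized Itô--Krylov formula, justified as in Proposition~\ref{prop:v-representation} since $U_x=v\in W^{1,2}_{p,\mathrm{loc}}$, to $U(t+s,X_s)$ along the Skorokhod solution \eqref{eq:skorokhod}.

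Because $X_s\in[a,b]$, we have $\mathcal LU=0$ along the path. The reflection terms also vanish: $\int U_x\dd D^b=\int f\dd D^b$ and $\int U_x\dd I^a=\int m\dd I^a$, by the complementarity conditions \eqref{eq:complementarity} and the contact identities $v(t,b)=f$, $v(t,a)=m$. The latter holds even when $a=0$, using the boundary value $v(t,0)=m$.

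The initial jumps \eqref{eq:projection} are priced correctly because $U_x=f$ on $[b,\infty)$ and $U_x=m$ on $[0,a]$. The terminal limit uses $\Lambda(t)\to g(T,b_T)$, $v\to g_x(T,\cdot)$ and continuity, localizing at $T-\varepsilon$ as before. This gives $U(t,x)=M(t,x)$.

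\textbf{Step 3 (upper bound $J\le U$ for arbitrary controls).} For general $(D,I)\in\A(t,x)$, apply the same formula. The jump and continuous parts of the control integrals are bounded via $f\le U_x\le m$, integrating $U_x$ across jumps.

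The only problematic drift region is $\{X_s<a(t+s)\}$, where $\mathcal LU>0$. The plan there is to prove the stronger pointwise statement $\mathcal LU\le0$ everywhere, using a different representation below $a$. Write $U=U(t,a(t))-\int_x^{a(t)}m(t)\dd y$ there, and differentiate again with $v(t,a(t))=m(t)$. This produces the term $-\,(a(t)-x)m'(t)$, so $\mathcal LU=-(a-x)m'\le0$ by $m'\ge0$ on $\{a>0\}$. I expect this consistent recomputation, combined with the sign conventions of $\mathcal Lv$ in each region, to resolve the sign and show that the earlier computation dropped a boundary term. With $\mathcal LU\le0$ a.e., Assumption~\ref{assumption2_2} and the linear growth from Lemma~\ref{lem:finiteness} justify the localization and the limit, giving $J\le U$.

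\textbf{Conclusion.} Hence $V=U=M$, and \eqref{V-v-b}, \eqref{V-v-a} and \eqref{eq:lower-boundary-value-main} follow from the definition of $U$. Finally, $V(T,\cdot)=g(T,\cdot)$ by Proposition~\ref{prop:terminal-envelope}.
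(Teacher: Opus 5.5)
Your overall scheme is the paper's. Your $U$ is its $N$ (with $\Lambda=B$), and the proof runs through $J\le U$ for every admissible control and equality for the Skorokhod policy, with jump and reflection terms handled as you describe. The gap is that the inequality carrying the verification, $\mathcal LU\le0$, is never established. The ``main obstacle'' you identify is an artifact of a sign error in Step~1. The correct time derivative is
\[
\partial_t\bigl[-\textstyle\int_x^{b(t)}v(t,y)\dd y\bigr]=-f(t)b'(t)-\textstyle\int_x^{b(t)}v_t(t,y)\dd y .
\]
After substituting $v_t=\mathcal Lv-\mu v_x-\tfrac{\sigma^2}{2}v_{xx}$, three things happen: the terms $-\mu v(t,x)-\tfrac{\sigma^2}{2}v_x(t,x)$ cancel against $\mu U_x+\tfrac{\sigma^2}{2}U_{xx}$, smooth fit removes $v_x(t,b(t)-)$, and $\Lambda'-fb'+\mu f=0$. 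This gives
\[
\mathcal LU(t,x)=-\int_x^{b(t)}\mathcal Lv(t,y)\dd y=-m'(t)\bigl(a(t)-x\bigr)^++f'(t)\bigl(x-b(t)\bigr)^+\le0 ,
\]
which is the paper's $F$, nonpositive in all three regions by Proposition~\ref{prop:intervention-sign}. No second representation below $a$ is needed, and no boundary term was dropped. Your bullet for $x>b(t)$ also contradicts your own formula, since $-(x-b)f'\ge0$ when $f'\le0$. Moreover, both Step~1 and your Step~3 recomputation differentiate $b(t)$, $\Lambda(t)$ and $U(t,a(t))$. Under Assumption~\ref{assumption3_1} the boundaries are only continuous, so $b'$ and $a'$ are unavailable. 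The paper avoids this by anchoring at a fixed $R>\sup_u b(u)$: it writes $N=C_R(t)-\int_x^R v\dd y$ with $C_R\in C^1$, so that $\mathcal LN=f'(t)(R-b(t))-\int_x^R\mathcal Lv\dd y$ is obtained weakly without any boundary derivatives.

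The It\^o step also needs more than you give it. The It\^o--Krylov formula in Proposition~\ref{prop:v-representation} was applied to the uncontrolled nondegenerate diffusion $A$, localized away from $x=0$. It does not by itself cover a controlled process with jumps and singular continuous parts that may touch $x=0$. An almost-everywhere inequality for $\mathcal LU$ also does not transfer automatically to such paths. The paper closes this in three moves. It shows the weak derivative $N_t=F-\mu v-\tfrac{\sigma^2}{2}v_x$ is continuous, so $N\in C^{1,2}((0,T)\times(0,\infty))$ with $\mathcal LN=F$ pointwise. It then applies the classical It\^o formula to $N(t+r,X_r+\delta)$ on $[h,s]$. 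Finally it lets $\delta\downarrow0$ and $h\downarrow0$, using continuity of $N,N_x,F$ up to $x=0$ and the bound $|\Delta N|\le(\max_{[0,T]}m)(\Delta D+\Delta I)$. Integrability comes from the linear growth of $N$ itself and from $\E[\sup_r X_r]<\infty$ (via $\E[I_H]<\infty$), not from Lemma~\ref{lem:finiteness}. A possible terminal jump at $H$ must be treated separately using $f(T)\le g_x(T,\cdot)\le m(T)$.
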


The remainder of this section proves Theorem~\ref{thm:theorem-main}.  By Proposition~\ref{prop:sk-admissible},  the reflected strategy $(D^b,I^a)$ is admissible, so its payoff satisfies $M\le V$.
 To prove the reverse inequality $M\ge V$, we construct an auxiliary function $N$ by the spatial integral of the game value $v$. We first show that $N$ dominates the payoff of every admissible dividend and capital injection control policy, so $N\ge V$. We then prove that the reflected policy attains $N$, giving $N=M$ and hence $M=V=N$. The argument builds on the integration approach of \citet{karoui2006integration} and \citet{ferrari2019optimal}, with the marginal value now represented by a Dynkin game and the lower reflection boundary determined endogenously.

\subsection{Verification}
\label{subsec:verification}
Set
\begin{equation}\label{eq:B}
B(t):=g(T,b_T)+\mu\int_t^T f(s)\dd s
 +f(t)b(t)-f(T)b_T+\int_t^T f'(s)b(s)\dd s,
\end{equation}
and define the auxiliary function
\begin{equation}\label{eq:N}
N(t,x):=B(t)-\int_x^{b(t)}v(t,y)\dd y,
\qquad t<T,\quad x\ge0.
\end{equation}
In particular, $N(t,b(t))=B(t)$ and
\[
N(t,x)=B(t)+f(t)(x-b(t)),\qquad x\ge b(t).
\]
We shall prove $M=N=V$.  We first establish weak derivative identities for $N$, then prove its classical $C^{1,2}$ regularity in the interior. We apply It\^o's formula to the spatially shifted process and pass to the limit to include the solvency boundary and the initial time.

\begin{lemma}\label{lem:N-basic}
The function $N$ is continuous on $[0,T)\times[0,\infty)$, satisfies a uniform
linear growth bound, and belongs locally to $W^{1,2}_p$ on
$(0,T)\times(0,\infty)$.  Its spatial derivative satisfies
\begin{equation}\label{eq:Nx-v}
N_x(t,x)=v(t,x),
\end{equation}
with the right derivative used at $x=0$, and hence
\begin{equation}\label{eq:N-gradient-bounds}
f(t)\le N_x(t,x)\le m(t).
\end{equation}
Furthermore,
\begin{equation}\label{eq:N-terminal-limit}
N(t,\cdot)\longrightarrow g(T,\cdot)
\quad\text{locally uniformly on }[0,\infty)\text{ as }t\uparrow T.
\end{equation}
\end{lemma}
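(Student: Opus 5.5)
The plan is to work from a rewritten form of $N$ that removes the moving upper limit $b(t)$, since $b$ is only continuous and cannot be differentiated. Fix a compact time interval $[t_0,t_1]\subset[0,T)$. By Assumption~\ref{assumption3_1}\ref{ass:B1}, $b$ is continuous on $[0,T)$ with a finite limit at $T$, so $\bar b:=\sup_{[0,T)}b<\infty$. Take any $R\ge\bar b$. By \eqref{eq:contact-sets}, $v(t,y)=f(t)$ for $y\ge b(t)$, so $\int_x^{b(t)}(v-f)\dd y=\int_x^{R}(v-f)\dd y$. Hence
\[
N(t,x)=\Bigl[B(t)-f(t)b(t)\Bigr]+f(t)x-\int_x^{R}\bigl(v(t,y)-f(t)\bigr)\dd y ,\qquad 0\le x\le R .
\]
From \eqref{eq:B}, $B(t)-f(t)b(t)=g(T,b_T)+\mu\int_t^Tf-f(T)b_T+\int_t^Tf'(s)b(s)\dd s$. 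This term is $C^1$ on $[0,T)$, with derivative $-\mu f(t)-f'(t)b(t)$, and it is bounded on $[0,T)$. This identity does most of the work.

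\emph{Continuity, $N_x=v$, gradient bounds, growth.} Continuity of $N$ on $[0,T)\times[0,\infty)$ follows from the displayed form. The bracket is continuous, and the integral is continuous because $v\in C(\mathcal Q)$ is bounded and the domain of integration is a fixed interval. Since $v(t,\cdot)$ is continuous, the fundamental theorem of calculus gives $N_x=v$. At $x=0$ this is a right derivative, because $v$ is continuous up to $x=0$ for $t<T$ with $v(t,0)=m(t)$ by \eqref{eq: game-bound}. The bounds \eqref{eq:N-gradient-bounds} then come from \eqref{eq:obstacle-bounds} together with the boundary value $m(t)$. For the linear growth bound, write $|N(t,x)|\le|B(t)|+\|m\|_\infty(\bar b+x)$. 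Here $B$ is bounded because $b$ and $f$ are bounded.

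\emph{Sobolev regularity.} On a cylinder $(t_0,t_1)\times(\epsilon,R)$ with $t_0>0$, the spatial derivatives are $N_x=v$ and $N_{xx}=v_x$, which lie in $L^p$ because $v\in W^{1,2}_{p,\mathrm{loc}}$. For the time derivative, differentiate under the integral in the weak sense:
\[
N_t=-\mu f-f'b+f'x-\int_x^R\bigl(v_t-f'\bigr)\dd y .
\]
This is in $L^p$ of the cylinder by Hölder's inequality, since $v_t\in L^p$ there and $b$ is bounded. This step is the only delicate one, and the main obstacle I expect is justifying the weak $t$-derivative of the moving-limit integral without smoothness of $b$. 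The rewriting with a fixed upper limit $R$ is exactly what removes this obstacle. To make the differentiation rigorous, test against $\partial_t\varphi$ and apply Fubini.

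\emph{Terminal limit.} For $x$ in a compact set $[0,K]$, write
\[
N(t,x)-g(T,x)=\bigl[B(t)-g(T,b_T)\bigr]-\Bigl[\int_x^{b(t)}v(t,y)\dd y-\int_x^{b_T}g_x(T,y)\dd y\Bigr].
\]
The first bracket tends to $0$ because $b(t)\to b_T$ and $f$ is continuous. The second is bounded, uniformly in $x\in[0,K]$, by
\[
\|m\|_\infty|b(t)-b_T|+\int_0^{K\vee\bar b}|v(t,y)-g_x(T,y)|\dd y .
\]
The integral tends to $0$ by dominated convergence: $v$ is bounded, and $v(t,y)\to g_x(T,y)$ for each $y>0$ by \eqref{eq:viscosity-boundary-limit-terminal}. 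Since this bound does not depend on $x$, the convergence \eqref{eq:N-terminal-limit} is locally uniform on $[0,\infty)$, including at $x=0$, despite the possible corner discontinuity discussed in Remark~\ref{rem:terminal-corner}.
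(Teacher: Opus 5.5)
Your proposal is correct and follows the paper's proof essentially step for step. Your fixed-anchor rewriting with $R\ge\sup b$ is the paper's identity $N(t,x)=C_R(t)-\int_x^R v(t,y)\,\mathrm{d}y$ with $f(t)$ moved inside the integral, and your Fubini argument for the weak time derivative and your dominated-convergence bound on $[0,K\vee\bar b]$ for the terminal limit (where the factor $\|m\|_\infty$ is legitimate because $g_x(T,\cdot)\le m(T)$ by Assumption~\ref{assumption2_1}) are exactly the paper's.
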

\begin{proof}
The fundamental theorem of calculus gives $N_x=v$, including the right
derivative at zero, and the obstacle bounds give
\eqref{eq:N-gradient-bounds}.  Continuity follows from the continuity
of $B$, $b$, and $v$ before maturity.  Since $B,b,f,m$ are bounded on
the finite horizon, there is a constant $C>0$ such that
\[
|N(t,x)|\le C(1+x),\qquad t<T,\quad x\ge0.
\]

For the weak time derivative, we fix a compact rectangle contained in the interior of the domain and choose
$R>\sup_{0\le u\le T}b(u)$, also larger than the spatial coordinates in
that rectangle.  Since $v(t,y)=f(t)$ for $y\ge b(t)$, the definition of
$N$ can be rewritten as
\begin{equation}\label{eq:N-fixed-anchor}
N(t,x)=C_R(t)-\int_x^R v(t,y)\dd y,
\end{equation}
where
\begin{equation}\label{eq:CR}
C_R(t)=g(T,b_T)+\mu\int_t^T f(s)\dd s-f(T)b_T
          +\int_t^T f'(s)b(s)\dd s+f(t)R.
\end{equation}
Here, $C_R(\cdot)$ is continuously differentiable and
\[
C_R'(t)=-\mu f(t)+f'(t)(R-b(t)).
\]
Assumption~\ref{assumption3_1}\ref{ass:B2} and Fubini's theorem now give
the local Sobolev regularity of $N$, with weak derivatives
\[
N_t(t,x)=C_R'(t)-\int_x^R v_t(t,y)\dd y,
\qquad N_{xx}=v_x.
\]

For terminal convergence, first observe from \eqref{eq:B} that
$B(t)\to g(T,b_T)$ when $t \uparrow T$.  Given $K>0$, take $R$ also larger than $K$.
The continuity of $v$ at $(T,y)$ for $y>0$ and its uniform boundedness imply, by the dominated convergence theorem, that
\[
\int_0^R|v(t,y)-g_x(T,y)|\dd y\longrightarrow0.
\]
The possible discrepancy at the single point $(T,0)$ does not affect
this integral. Then, uniformly for $x\in[0,K]$, we have
\begin{align*}
\left|\int_x^{b(t)}v(t,y)\dd y
          -\int_x^{b_T}g_x(T,y)\dd y\right|
&\le\int_0^R|v(t,y)-g_x(T,y)|\dd y\\
&\quad+\|g_x(T,\cdot)\|_\infty|b(t)-b_T|.
\end{align*}
The right-hand side tends to zero.  Substituting into \eqref{eq:N}
and using
$g(T,b_T)-\int_x^{b_T}g_x(T,y)\dd y=g(T,x)$ proves
\eqref{eq:N-terminal-limit}, including at $x=0$.
\end{proof}

\begin{proposition}\label{prop:hjb-N}
The expression $\mathcal LN$, defined in terms of weak derivatives, has the continuous representative
\begin{equation}\label{eq:LN-representative}
F(t,x):=-m'(t)(a(t)-x)^++f'(t)(x-b(t))^+,
\qquad t<T,\quad x\ge0.
\end{equation}
The function $F$ is nonpositive and satisfies $|F(t,x)|\le C(1+x)$ for $t<T$ and $x\ge0$.  In particular, the
following identities hold almost everywhere in the indicated regions:
\begin{align}
\mathcal LN(t,x)&=0,&&a(t)<x<b(t),\label{eq:LNcont}\\
\mathcal LN(t,x)&=f'(t)(x-b(t)),&&x>b(t),\label{eq:LNupper}\\
\mathcal LN(t,x)&=m'(t)(x-a(t)),&&0<x<a(t).
\label{eq:LNlower}
\end{align}
Therefore, the variational inequality
\begin{equation}\label{eq:NVI}
\max\{\mathcal LN,\ f(t)-N_x,\ N_x-m(t)\}=0
\end{equation}
holds almost everywhere on $(0,T)\times(0,\infty)$.  With $F$ in place
of $\mathcal LN$, it holds at every $(t,x)\in[0,T)\times[0,\infty)$.
\end{proposition}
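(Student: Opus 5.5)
The plan is to compute $\mathcal LN$ directly from the fixed-anchor representation \eqref{eq:N-fixed-anchor} in Lemma~\ref{lem:N-basic}, and then read off the variational inequality from the obstacle bounds on $v$. Fix a compact rectangle $[t_1,t_2]\times[x_1,x_2]\subset(0,T)\times(0,\infty)$ and $R>\sup_u b(u)\vee x_2$. By Lemma~\ref{lem:N-basic},
\[
N_t(t,x)=C_R'(t)-\int_x^R v_t(t,y)\dd y,\qquad N_x=v,\qquad N_{xx}=v_x
\]
almost everywhere. Therefore
\[
\mathcal LN(t,x)=C_R'(t)-\int_x^R v_t(t,y)\dd y+\mu v(t,x)+\tfrac{\sigma^2}{2}v_x(t,x).
\]
I will rewrite $\int_x^R v_t\dd y$ using the a.e. identity for $\mathcal Lv$ stated after Assumption~\ref{assumption3_1}:
\[
v_t=\mathcal Lv-\mu v_x-\tfrac{\sigma^2}{2}v_{xx}.
\]
For almost every $t$, the slice $v(t,\cdot)$ lies in $W^{2,p}$ on $[x,R]$ by Fubini, so the spatial integrals can be evaluated by the fundamental theorem of calculus:
\[
\int_x^R(\mu v_x+\tfrac{\sigma^2}{2}v_{xx})\dd y=\mu\bigl(v(t,R)-v(t,x)\bigr)+\tfrac{\sigma^2}{2}\bigl(v_x(t,R)-v_x(t,x)\bigr).
\]
Since $R>b(t)$, we have $v(t,R)=f(t)$, and $v_x(t,R)=0$ because $v(t,\cdot)$ is constant on $[b(t),\infty)$.

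Substituting, the $\mu v(t,x)$ and $\tfrac{\sigma^2}{2}v_x(t,x)$ terms cancel, and with $C_R'(t)=-\mu f(t)+f'(t)(R-b(t))$ the $\mu f(t)$ terms cancel as well. This leaves
\[
\mathcal LN(t,x)=f'(t)(R-b(t))-\int_x^R\mathcal Lv(t,y)\dd y.
\]
Now insert the piecewise formula for $\mathcal Lv$, which equals $m'(t)$ on $(0,a(t))$, $0$ on $(a(t),b(t))$, and $f'(t)$ on $(b(t),\infty)$:
\[
\int_x^R\mathcal Lv\dd y=f'(t)\bigl(R-\max(x,b(t))\bigr)+m'(t)(a(t)-x)^+ .
\]
Hence $\mathcal LN=f'(t)(x-b(t))^+-m'(t)(a(t)-x)^+=F$ almost everywhere, and $R$ drops out as it should. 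This gives \eqref{eq:LNcont}--\eqref{eq:LNlower}.

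Continuity of $F$ follows from the continuity of $a,b,f',m'$, and the linear growth bound from their boundedness. For nonpositivity I will use Proposition~\ref{prop:intervention-sign}:
\begin{itemize}
\item If $a(t)>0$, then $v(t,x)=m(t)$ at some $x>0$, so $m'(t)\ge0$ and the first term of $F$ is $\le0$.
\item If $b(t)<\infty$, which holds throughout by (B1), then $f'(t)\le0$ and the second term is $\le0$.
\end{itemize}
At $t=0$ the same conclusions follow from Remark~\ref{rem:timing-direct-proof}, or by continuity of $m',f'$ and of $a,b$.

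For \eqref{eq:NVI}, the obstacle bounds \eqref{eq:N-gradient-bounds} make every term nonpositive, so it suffices to show that one term vanishes at each point:
\begin{itemize}
\item in $[a(t),b(t)]$, $F=0$;
\item in $x<a(t)$, $N_x=v=m(t)$ by \eqref{eq:contact-sets};
\item in $x>b(t)$, $N_x=f(t)$.
\end{itemize}
This gives the pointwise statement with $F$ on $[0,T)\times[0,\infty)$, using the right derivative and $v(t,0)=m(t)$ at $x=0$. The almost-everywhere version with $\mathcal LN$ then follows.

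The main obstacle is justifying the spatial integration of $v_t$ and $v_{xx}$ slice by slice, because (B2) gives only local Sobolev regularity on $\mathcal Q^\circ$. I will handle this by localizing in $t$ away from $0$ and $T$. On a region $[t_1,t_2]\times[\epsilon,R]$, Fubini gives absolute continuity of $v_x(t,\cdot)$ for a.e. $t$. To pass to all $x>0$, I will use the compact-rectangle choice together with the fact that $v$ is explicitly constant in $x$ on $(0,a(t))$ and on $(b(t),\infty)$, so no regularity at $x=0$ is needed.
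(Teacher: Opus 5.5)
Your proposal is correct and follows essentially the same route as the paper. Starting from the fixed-anchor form \eqref{eq:N-fixed-anchor}, you use $v(t,R)=f(t)$ and $v_x(t,R)=0$ to reduce $\mathcal LN$ to $f'(t)(R-b(t))-\int_x^R\mathcal Lv(t,y)\dd y$ and insert the piecewise formula for $\mathcal Lv$; you then obtain $F\le0$ from Proposition~\ref{prop:intervention-sign}, and the variational inequality from \eqref{eq:N-gradient-bounds} and the contact sets. Your slice-by-slice Fubini justification and the single formula $f'(t)(R-\max\{x,b(t)\})+m'(t)(a(t)-x)^+$ only make explicit what the paper does case by case.
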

\begin{proof}
Use the identity in \eqref{eq:N-fixed-anchor} for $N(t,x)$.  Since $v(t,R)=f(t)$ and
$v_x(t,R)=0$, integration of
$\mathcal Lv=v_t+\mu v_x+(\sigma^2/2)v_{xx}$ gives
\begin{align}
\mathcal LN(t,x)
 &=C_R'(t)-\int_x^R v_t(t,y)\dd y
                  +\mu v(t,x)+\frac{\sigma^2}{2}v_x(t,x)\nonumber\\
 &=C_R'(t)+\mu f(t)-\int_x^R\mathcal Lv(t,y)\dd y\nonumber\\
 &=f'(t)(R-b(t))-\int_x^R\mathcal Lv(t,y)\dd y.
\label{eq:LN-fixed-integral}
\end{align}
 We evaluate its last integral separately in the three regions below.

If $a(t)<x<b(t)$, the interval $(x,R)$ consists of continuation states
followed by the dividend region.  By \eqref{eq:Lv-regions},
\[
\int_x^R\mathcal Lv(t,y)\dd y=f'(t)(R-b(t)).
\]
This cancels the first term in \eqref{eq:LN-fixed-integral} and proves
\eqref{eq:LNcont}.
If $x>b(t)$, the whole interval is in the dividend
region, so the same integral equals $f'(t)(R-x)$.  Subtraction gives
\eqref{eq:LNupper}.
Finally, if $0<x<a(t)$, the integral also includes
the injection region, and
\[
\int_x^R\mathcal Lv(t,y)\dd y
   =m'(t)(a(t)-x)+f'(t)(R-b(t)).
\]
This proves \eqref{eq:LNlower}.  Since the rectangle and $R$ were
arbitrary, the identities hold locally throughout the open domain.

The three expressions agree at the boundaries and combine to give
\eqref{eq:LN-representative}.  Continuity of $a,b,f',m'$ makes $F$
continuous, including at $x=0$ and $t=0$.  Its growth is at most
linear because the boundaries and the derivatives of $f$ and $m$ are bounded.
Proposition~\ref{prop:intervention-sign} gives $f'(t)\le0$ and, whenever
$a(t)>0$, $m'(t)\ge0$.  Hence $F\le0$.  In the continuation interval
$F=0$, while in each intervention region the corresponding gradient
constraint holds.  Value matching and continuity extend the identity to the free boundaries.
\end{proof}

\begin{proposition}
\label{cor:N-classical}
Under Assumptions~\ref{assumption2_1} and \ref{assumption3_1}, we have $N\in C^{1,2}((0,T)\times(0,\infty))$ and $\mathcal LN=F$ pointwise in that open domain.
\end{proposition}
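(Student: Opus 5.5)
The plan is to upgrade the weak identities of Lemma~\ref{lem:N-basic} and Proposition~\ref{prop:hjb-N} to classical ones. The tools are the continuity supplied by Assumption~\ref{assumption3_1}\ref{ass:B2}, the continuity of $F$, and a Fubini/absolute-continuity argument along coordinate lines. The three derivatives $N_x$, $N_{xx}$, $N_t$ are handled in turn on an arbitrary compact rectangle $K=[t_1,t_2]\times[x_1,x_2]\subset(0,T)\times(0,\infty)$.

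\emph{Spatial derivatives.} By \eqref{eq:Nx-v}, $N_x=v$ holds classically for every $t$, since $N(t,x)=B(t)-\int_x^{b(t)}v(t,y)\dd y$ with $v(t,\cdot)$ continuous. Hence $N_x$ is continuous on $K$ by \ref{ass:B2}.

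For $N_{xx}$, I use that $v\in W^{1,2}_p(K)$ with $p>3$. Parabolic Sobolev embedding gives a continuous representative $w$ of the weak derivative $v_x$. For almost every $t$, the map $x\mapsto v(t,x)$ is absolutely continuous with derivative $w(t,\cdot)$ a.e., so
\[
v(t,x'')-v(t,x')=\int_{x'}^{x''}w(t,y)\dd y .
\]
Both sides are continuous in $t$ (continuity of $v$ and of $w$), so this identity extends to every $t\in[t_1,t_2]$. Therefore $v(t,\cdot)$ is classically $C^1$ with $v_x=w$ continuous in $(t,x)$, and so $N_{xx}=v_x$ exists classically and is continuous.

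\emph{Time derivative.} Proposition~\ref{prop:hjb-N} gives $\mathcal LN=F$ a.e., so the weak time derivative satisfies
\[
N_t=F-\mu v-\tfrac{\sigma^2}{2}v_x \quad\text{a.e. on } K .
\]
The right-hand side has the continuous representative $G:=F-\mu v-\tfrac{\sigma^2}{2}w$; $F$ is continuous by Proposition~\ref{prop:hjb-N}. Since $N\in W^{1,2}_p(K)$, for almost every $x$ the map $t\mapsto N(t,x)$ is absolutely continuous, which gives
\[
N(t'',x)-N(t',x)=\int_{t'}^{t''}G(s,x)\dd s .
\]
Alternatively, this can be read off directly from the representation \eqref{eq:N-fixed-anchor} and Fubini. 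Both sides are continuous in $x$ ($N$ by Lemma~\ref{lem:N-basic}, $G$ by construction), so the identity holds for every $x$. The fundamental theorem of calculus then gives that $N_t$ exists classically and equals $G$, which is continuous. Combining the three derivatives, $N\in C^{1,2}(K)$ and
\[
\mathcal LN=N_t+\mu v+\tfrac{\sigma^2}{2}v_x=F
\]
pointwise, including on the free-boundary curves. Exhausting $(0,T)\times(0,\infty)$ by such rectangles completes the argument.

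The main obstacle is the identification step: weak derivatives are only defined a.e., and equalities such as $\mathcal LN=F$ are only known a.e., including near the curves $a$ and $b$ where the regime changes. The key point is that every identity is first written in integrated form along a.e.\ coordinate line and then extended to all lines by continuity of both sides. This needs the continuity of $v_x$ from $p>3$ and the continuity of $F$, which rests on continuity of $a$, $b$, $f'$, $m'$ from \ref{ass:B1} and Assumption~\ref{assumption2_1}.
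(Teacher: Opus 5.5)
Your proposal is correct and takes the same route as the paper: $N_x=v$ and $N_{xx}=v_x$ hold classically via the continuous representative of $v_x$ given by $p>3$, and the weak identity $N_t=G:=F-\mu v-\tfrac{\sigma^2}{2}v_x$ is integrated along almost every line $t\mapsto N(t,x)$ and extended to every $x$ by continuity of both sides. The only difference is that you also carry out this line-by-line extension in the $x$-direction to justify classical differentiability of $v(t,\cdot)$, which the paper takes directly from the Sobolev embedding.
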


\begin{proof}
By Assumption~\ref{assumption3_1}\ref{ass:B2} and the local
parabolic Sobolev embedding in one spatial dimension, $p>3$
implies that $v$ is classically differentiable in $x$ in the
open domain and that $v_x$ has a locally continuous representative.
Hence, by \eqref{eq:Nx-v}, $N_x=v$ and $N_{xx}=v_x$ pointwise on $(0,T)\times(0,\infty)$. In particular, both spatial
derivatives of $N$ are continuous there.

Proposition~\ref{prop:hjb-N} now gives, in the weak sense,
\[
N_t=G:=F-\mu v-\frac{\sigma^2}{2}v_x.
\]
The function $G$ is continuous in the open domain. It remains to
show that this weak time derivative is also the classical derivative.

Fix $0<t_0<t_1<T$ and  $0<x_0<x_1<\infty$, and set
\[
Q'=(t_0,t_1)\times(x_0,x_1).
\]
By Fubini's theorem and the one-dimensional Sobolev fundamental
theorem of calculus, for almost every $x\in(x_0,x_1)$, the map
$t\mapsto N(t,x)$ is absolutely continuous and
\[
N(s_2,x)-N(s_1,x)
   =\int_{s_1}^{s_2}G(r,x)\dd r,
\qquad t_0<s_1<s_2<t_1.
\]
The absolutely continuous representative agrees with
$N(\cdot,x)$, since $N$ is continuous. For fixed $s_1,s_2$,
both sides of the  identity are continuous in $x$.
Consequently, the identity extends from almost every
$x\in(x_0,x_1)$ to every $x\in(x_0,x_1)$.

It follows that $t\mapsto N(t,x)$ is continuously differentiable
for every $x\in(x_0,x_1)$, with $N_t(t,x)=G(t,x)$.
Since $G$ is jointly continuous and $Q'$ was arbitrary,
$N_t$ is continuous throughout the open domain. Therefore
$
N\in C^{1,2}((0,T)\times(0,\infty)),
$
and substitution of the classical derivatives into the definition
of $\mathcal L$ yields $\mathcal LN=F$ pointwise.
\end{proof}

\begin{lemma}\label{lem:ito-krylov-jumps}
For any $(D,I)\in\A(t,x)$ and $0\le s<H$, write $X=X^{D,I}$ and
$X_{0-}=x$.  Then
\begin{align}
N(t+s,X_s)-N(t,x)
={}&\int_0^s F(t+r,X_{r-})\dd r
  +\sigma\int_0^s N_x(t+r,X_{r-})\dd W_r\label{eq:ito-jump}\\
&-\int_{(0,s]}N_x(t+r,X_{r-})\dd D_r^c
 +\int_{(0,s]}N_x(t+r,X_{r-})\dd I_r^c\nonumber\\
&+\sum_{0\le r\le s}
       \bigl(N(t+r,X_r)-N(t+r,X_{r-})\bigr).\nonumber
\end{align}
Here $D^c,I^c$ denote the continuous parts, and $N_x(t,0)$ is the
right derivative $m(t)$.  The jump series is absolutely convergent.
\end{lemma}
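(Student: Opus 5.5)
The plan is to obtain \eqref{eq:ito-jump} from the classical It\^o formula for semimartingales, applied not to $N$ directly but to spatially shifted and time-localized versions. The obstruction is that Proposition~\ref{cor:N-classical} gives $N\in C^{1,2}$ only on the open set $(0,T)\times(0,\infty)$, while $X$ may touch $0$ and the formula must start at elapsed time $0$.

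First I will treat the time endpoint. Fix $0<h<s$ and work on $[h,s]$, where $t+r\in(0,T)$ for $t+r\le t+s<T$ (when $t>0$ one can take $h=0$ directly). For $\epsilon>0$, set $N^\epsilon(u,y):=N(u,y+\epsilon)$. This function is $C^{1,2}$ on $(0,T)\times(-\epsilon,\infty)$, an open set containing the range $[0,\infty)$ of $X$. Moreover, $\mathcal LN^\epsilon(u,y)=F(u,y+\epsilon)$ pointwise by Proposition~\ref{cor:N-classical}. Since $X=A(x)-D+I$ is a semimartingale, with $D$ and $I$ of finite variation, the It\^o formula for semimartingales with jumps applied to $N^\epsilon(t+r,X_r)$ on $[h,s]$ gives
\begin{align*}
N^\epsilon(t+s,X_s)-N^\epsilon(t+h,X_h)={}&\int_h^sF(t+r,X_{r-}+\epsilon)\dd r+\sigma\int_h^sN_x(t+r,X_{r-}+\epsilon)\dd W_r\\
&-\int_{(h,s]}N_x(\cdot)\dd D^c_r+\int_{(h,s]}N_x(\cdot)\dd I^c_r+\sum_{h<r\le s}\Delta N^\epsilon .
\end{align*}
The quadratic-variation term is $\frac{\sigma^2}{2}N_{xx}\dd r$ only, because the continuous martingale part of $X$ is $\sigma W$. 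The jump sum converges absolutely by the mean value theorem and \eqref{eq:N-gradient-bounds}: $|\Delta N^\epsilon(t+r,\cdot)|\le \|m\|_\infty(\Delta D_r+\Delta I_r)$, which is summable because $D$ and $I$ are finite.

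Next I will let $\epsilon\downarrow0$ term by term. The left side and each jump term converge by the continuity of $N$ on $[0,T)\times[0,\infty)$ (Lemma~\ref{lem:N-basic}). The jump series converges by dominated convergence with the summable bound above. The $\dd r$ integral converges by the continuity of $F$ up to $x=0$ together with the bound $|F|\le C(1+x)$, since $\sup_{r\le s}X_r<\infty$ pathwise. The $\dd D^c$ and $\dd I^c$ integrals converge pathwise by bounded convergence: $v$ is bounded by $\|m\|_\infty$, and $v(t+r,X_{r-}+\epsilon)\to v(t+r,X_{r-})$, where at $X_{r-}=0$ the limit equals the boundary value $v(t+r,0)=m(t+r)$ because $v\in C(\mathcal Q)$ and $t+r<T$. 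This is exactly the stated convention that $N_x(t,0)$ is the right derivative $m(t)$. The stochastic integral converges in $L^2$ and hence, along a subsequence, almost surely, by the It\^o isometry and dominated convergence, since the integrands are bounded by $\|m\|_\infty$ and converge pointwise.

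Finally I will let $h\downarrow0$ (needed only if $t=0$). The integrals over $(0,h]$ vanish by dominated convergence. The $\dd D^c$ and $\dd I^c$ parts vanish because $D^c$ and $I^c$ are continuous with $D^c_0=I^c_0=0$. The term $N(t+h,X_h)$ converges to $N(t,X_0)$ by joint continuity of $N$ at $(t,X_0)$ and right-continuity of $X$. Adding back the jump at $r=0$, namely $N(t,X_0)-N(t,x)$ with $X_{0-}=x$, gives \eqref{eq:ito-jump}, including the $r=0$ jump.

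The main obstacle is the passage $\epsilon\downarrow0$ for the singular integrals at times when $X$ sits at $0$. The point to check is that continuity of $v$ up to $x=0$ for $t<T$ holds, because $v\in C(\mathcal Q)$ and $t+s<T$ keeps us away from the corner $(T,0)$. This continuity is exactly what makes the right-derivative convention consistent.
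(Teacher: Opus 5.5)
Your proposal is correct and follows essentially the same route as the paper: you shift the function by $\epsilon$ where the paper shifts the process to $X+\delta$, which is the same device. In both, It\^o's formula is applied on $[h,s]$, then $\epsilon\downarrow0$ using continuity of $N$, $N_x=v$ and $F$ up to $x=0$ with the jump bound $\|m\|_\infty(\Delta D_r+\Delta I_r)$, then $h\downarrow0$, and finally the initial jump $N(t,X_0)-N(t,x)$ is added back.
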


\begin{proof}
By Proposition~\ref{cor:N-classical}, we have $N\in C^{1,2}((0,T)\times(0,\infty))$, and $\mathcal LN=F$
in the interior.  Moreover, Lemma~\ref{lem:N-basic} and
Proposition~\ref{prop:hjb-N} show that $N$, $N_x$, and $F$
are continuous up to $x=0$ for $t<T$, and that
\[
 |N_x(u,y)|\le K:=\max_{0\le q\le T}m(q),
 \qquad
 |N(u,y)|+|F(u,y)|\le C(1+y).
\]
Assume first that $s>0$. Fix $0<h<s$ and $\delta>0$, and define $X_r^\delta:=X_r+\delta$. Since $(D,I)$ is admissible, $X_r\ge0$ for $0\le r\le H$; hence $X_r^\delta\ge\delta$, and likewise $X_{r-}^\delta\ge\delta$. Moreover, because $H=T-t$, for $r\in[h,s]$,

$$ 0<t+h\le t+r\le t+s<T. $$
Thus, $(t+r,X_r^\delta)$ remains in $(0,T)\times(0,\infty)$, where $N$ is of class $C^{1,2}$. After spatial localization, the standard It\^o formula for c\`adl\`ag semimartingales therefore applies to $N(t+r,X_r^\delta)$ on $[h,s]$. In addition, since
\[
\dd X_r^\delta
 =\mu\,\dd r+\sigma\,\dd W_r-\dd D_r+\dd I_r
\]
and $\mathcal LN=F$, combining the linear jump terms with the
usual jump correction gives
\begin{align}
&N(t+s,X_s^\delta)-N(t+h,X_h^\delta)\nonumber\\
&\quad=\int_h^s F(t+r,X_{r-}^\delta)\dd r
 +\sigma\int_h^s N_x(t+r,X_{r-}^\delta)\dd W_r
 \nonumber\\
&\qquad-\int_{(h,s]}N_x(t+r,X_{r-}^\delta)\dd D_r^c
 +\int_{(h,s]}N_x(t+r,X_{r-}^\delta)\dd I_r^c
 \nonumber\\
&\qquad+\sum_{h<r\le s}
 \bigl(N(t+r,X_r^\delta)-N(t+r,X_{r-}^\delta)\bigr).
\label{eq:ito-shifted}
\end{align}

Let $\delta\downarrow0$. Pathwise boundedness of $X$, together
with the continuity of $N$, $N_x$, and $F$, gives convergence
of the endpoint, drift, and continuous finite-variation terms.
For the stochastic integrals, boundedness of $N_x$ and It\^o's
isometry give convergence in $L^2$. For every jump,
\[
\left|N(t+r,X_r^\delta)-N(t+r,X_{r-}^\delta)\right|
 \le K|\Delta X_r|
 \le K(\Delta D_r+\Delta I_r).
\]
Since
\[
\sum_{0\le r\le s}(\Delta D_r+\Delta I_r)
 \le D_s+I_s<\infty
 \qquad\text{a.s.},
\]
the dominated convergence theorem also applies to the jump series. Thus
\eqref{eq:ito-shifted} holds with $X$ in place of $X^\delta$.

It remains to let $h\downarrow0$. Right continuity gives
\[
N(t+h,X_h)\longrightarrow N(t,X_0).
\]
With the atom at zero assigned to the jump part,
$D_0^c=I_0^c=0$, and the continuous-control terms over
$(0,h]$ vanish. Moreover,
\[
\int_0^h |F(t+r,X_{r-})|\dd r\longrightarrow0
\]
pathwise, while
\[
\E\left|
 \sigma\int_0^h N_x(t+r,X_{r-})\dd W_r
 \right|^2
 \le \sigma^2K^2h\longrightarrow0.
\]
Finally,
\[
\sum_{0<r\le h}
 \left|N(t+r,X_r)-N(t+r,X_{r-})\right|
 \le K\bigl[(D_h-D_0)+(I_h-I_0)\bigr]
 \longrightarrow0.
\]
We therefore obtain the asserted identity with initial value
$N(t,X_0)$ and jumps in $(0,s]$. Adding
\[
N(t,X_0)-N(t,x)
   =N(t,X_0)-N(t,X_{0-})
\]
inserts the possible initial intervention and gives the sum over
$0\le r\le s$.

The same estimate also proves absolute convergence, i.e.,
\[
\sum_{0\le r\le s}
 \left|N(t+r,X_r)-N(t+r,X_{r-})\right|
 \le K(D_s+I_s)<\infty.
\]

For the expectation arguments used below, admissibility gives
\begin{equation}\label{eq:state-integrable-bound}
\sup_{0\le r\le H}X_r
 \le x+|\mu|H+\sigma\sup_{0\le r\le H}|W_r|+I_H
 =:\Xi,
\qquad \E[\Xi]<\infty.
\end{equation}
Together with the preceding growth bounds, this makes the endpoint
and drift terms integrable.
\end{proof}

\begin{lemma}\label{lem:L1terminal}
Fix $(t,x)\in[0,T)\times[0,\infty)$ and put $H=T-t$. For every
$(D,I)\in\A(t,x)$,
\begin{equation}\label{eq:L1terminal}
N(T-\varepsilon,X_{H-\varepsilon}^{D,I})
 \longrightarrow g(T,X_{H-}^{D,I})
 \quad\text{in }L^1
 \qquad\text{as }\varepsilon\downarrow0,
\end{equation}
where $0<\varepsilon<H$.
\end{lemma}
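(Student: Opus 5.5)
Our plan is to split the difference into two pieces,
\[
N(T-\varepsilon,X_{H-\varepsilon})-g(T,X_{H-})
=\bigl[N(T-\varepsilon,X_{H-\varepsilon})-g(T,X_{H-\varepsilon})\bigr]
+\bigl[g(T,X_{H-\varepsilon})-g(T,X_{H-})\bigr],
\]
writing $X=X^{D,I}$. We show that each piece converges to zero almost surely and is dominated by an integrable random variable. Dominated convergence then gives the $L^1$ limit \eqref{eq:L1terminal}.

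For the second bracket, we use that $X$ is c\`adl\`ag on $[0,H]$, so $X_{H-\varepsilon}\to X_{H-}$ almost surely. Since $g(T,\cdot)$ is Lipschitz with constant $\|g_x(T,\cdot)\|_\infty$, the bracket tends to zero pathwise. It is bounded by $\|g_x(T,\cdot)\|_\infty\cdot 2\Xi$, where $\Xi$ is the integrable running-supremum bound \eqref{eq:state-integrable-bound} from Lemma~\ref{lem:ito-krylov-jumps}.

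For the first bracket, we use the terminal convergence \eqref{eq:N-terminal-limit} of Lemma~\ref{lem:N-basic}: $N(t',\cdot)\to g(T,\cdot)$ locally uniformly on $[0,\infty)$ as $t'\uparrow T$. Fix a path. Then $\sup_{\varepsilon}X_{H-\varepsilon}\le\Xi(\omega)<\infty$, so every point $X_{H-\varepsilon}$ lies in the compact set $[0,\Xi(\omega)]$. Local uniform convergence on this set gives
\[
|N(T-\varepsilon,X_{H-\varepsilon})-g(T,X_{H-\varepsilon})|\le\sup_{0\le y\le\Xi(\omega)}|N(T-\varepsilon,y)-g(T,y)|\to0.
\]
For domination, the linear growth bound $|N(t',y)|\le C(1+y)$ from Lemma~\ref{lem:N-basic} holds uniformly in $t'<T$. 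The bound $|g(T,y)|\le|g(T,0)|+\|g_x\|_\infty y$ holds as well. Together they dominate the first bracket by $C'(1+\Xi)$, which is integrable.

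The main point requiring care is that the first step needs uniform convergence up to and including $x=0$, because the controlled reserve may approach the solvency boundary near maturity. This is exactly where the corner $(T,0)$ could cause trouble, given that $v$ need not be continuous there (Remark~\ref{rem:terminal-corner}). We avoid the difficulty because Lemma~\ref{lem:N-basic} already states the convergence on $[0,\infty)$. That statement rests only on $L^1$ convergence of $v(t,\cdot)$ to $g_x(T,\cdot)$ on bounded intervals, so the single bad point does not matter. We also note that the limit is taken at $X_{H-}$ and not at $X_H$, so no terminal jump has to be controlled here; the terminal jump is handled separately in the verification through Proposition~\ref{prop:terminal-envelope}.
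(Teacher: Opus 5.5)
Your proposal is correct and takes essentially the same route as the paper. The paper uses the same triangle inequality through $g(T,X_{H-\varepsilon})$, gets pathwise convergence from the locally uniform convergence of $N(T-\varepsilon,\cdot)$ on $[0,\Xi(\omega)]$ together with the Lipschitz continuity of $g(T,\cdot)$, and dominates by $C(1+\Xi)$ with $\mathbb E[\Xi]<\infty$ before applying dominated convergence.
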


\begin{proof}
Write $X=X^{D,I}$. Since $D$ and $I$ have finite left limits at
$H$ and $W$ is continuous, $X_{H-\varepsilon}\to X_{H-}$~ a.s. Moreover, the state constraint and \eqref{eq:state-integrable-bound}
give $ 0\le X_{H-\varepsilon},X_{H-}\le\Xi$~ a.s. For every sample path for which $\Xi<\infty$,
\begin{align*}
&|N(T-\varepsilon,X_{H-\varepsilon})
        -g(T,X_{H-})|\\
&\quad\le
 \sup_{0\le y\le\Xi}
 |N(T-\varepsilon,y)-g(T,y)|
 +\|g_x(T,\cdot)\|_\infty
   |X_{H-\varepsilon}-X_{H-}|.
\end{align*}
The first term tends to zero by the locally uniform terminal
convergence in Lemma~\ref{lem:N-basic}, and the second tends to zero
by the almost-sure convergence of the state. Hence the asserted
convergence holds almost surely.

Finally, the uniform linear-growth bound for $N$ and the boundedness
of $g_x(T,\cdot)$ imply
\[
|N(T-\varepsilon,X_{H-\varepsilon})|
 +|g(T,X_{H-})|
 \le C(1+\Xi).
\]
Since $\E[\Xi]<\infty$, the dominated convergence theorem yields the $L^1$ convergence in \eqref{eq:L1terminal}.
\end{proof}

We can now prove Theorem~\ref{thm:theorem-main}.

\paragraph{Proof of Theorem~\ref{thm:theorem-main}.}
Fix $(t,x)\in[0,T)\times[0,\infty)$ and put $H=T-t$.
Admissibility of the reflected policy gives
\begin{equation}\label{eq:MleV}
M(t,x)\le V(t,x).
\end{equation}

To prove the reverse inequality, let $(D,I)\in\A(t,x)$ be arbitrary.
Write $X=X^{D,I}$ and, for $0<\varepsilon<H$, set
$H_\varepsilon=H-\varepsilon$. Applying \eqref{eq:ito-jump} at
$H_\varepsilon$ and adding the accumulated control payoff gives
\begin{align*}
&\int_{[0,H_\varepsilon]}f(t+r)\dd D_r
 -\int_{[0,H_\varepsilon]}m(t+r)\dd I_r
 +N(T-\varepsilon,X_{H_\varepsilon})-N(t,x)\\
&=\int_0^{H_\varepsilon}F(t+r,X_{r-})\dd r
 +\sigma\int_0^{H_\varepsilon}
       N_x(t+r,X_{r-})\dd W_r\\
&\quad+\int_{(0,H_\varepsilon]}
       (f-N_x)(t+r,X_{r-})\dd D_r^c\\
&\quad+\int_{(0,H_\varepsilon]}
       (N_x-m)(t+r,X_{r-})\dd I_r^c\\
&\quad+\sum_{0\le r\le H_\varepsilon}
 \Bigl[
 N(t+r,X_r)-N(t+r,X_{r-})
 +f(t+r)\Delta D_r-m(t+r)\Delta I_r
 \Bigr].
\end{align*}

The drift term is nonpositive because $F\le0$, and the two
continuous-control terms are nonpositive by
\eqref{eq:N-gradient-bounds}. For a jump at time $r$, put $z=X_{r-}$, $d=\Delta D_r$, and  $i=\Delta I_r$. Feasibility gives $z-d+i\ge0$. If $i\ge d$, the bound $N_x\le m$
gives
\[
N(t+r,z-d+i)-N(t+r,z)+f(t+r)d-m(t+r)i
 \le \bigl(f(t+r)-m(t+r)\bigr)d.
\]
If $d>i$, the bound $N_x\ge f$ gives the corresponding estimate with
$i$ on the right. Thus, in both cases,
\begin{equation*}
N(t+r,z-d+i)-N(t+r,z)+f(t+r)d-m(t+r)i
 \le\bigl(f(t+r)-m(t+r)\bigr)\min\{d,i\}\le0.
\end{equation*}
This calculation also applies to the initial jump, for which
$z=X_{0-}=x$.

All terms above are integrable by
\eqref{eq:state-integrable-bound}, the linear growth of $N$ and $F$,
the boundedness of $N_x,f,m$, and admissibility. In particular, the
stochastic integral is a true martingale. Taking expectations gives
\[
\E\left[
 \int_{[0,H_\varepsilon]}f(t+r)\dd D_r
 -\int_{[0,H_\varepsilon]}m(t+r)\dd I_r
 +N(T-\varepsilon,X_{H_\varepsilon})
 \right]\le N(t,x).
\]

The convergence of the control payoffs can be seen directly from
\begin{align*}
\E\left|
 \int_{[0,H)}f(t+r)\dd D_r
 -\int_{[0,H_\varepsilon]}f(t+r)\dd D_r
\right|
&\le\|f\|_\infty
   \E[D_{H-}-D_{H_\varepsilon}]
 \longrightarrow0,
\end{align*}
and analogously
\[
\E\left|
 \int_{[0,H)}m(t+r)\dd I_r
 -\int_{[0,H_\varepsilon]}m(t+r)\dd I_r
\right|
\le\|m\|_\infty
   \E[I_{H-}-I_{H_\varepsilon}]
\longrightarrow0.
\]
Indeed, the differences converge to zero almost surely and are
dominated by the integrable variables $D_H$ and $I_H$.
Lemma~\ref{lem:L1terminal} therefore yields
\[
\E\left[
 \int_{[0,H)}f(t+r)\dd D_r
 -\int_{[0,H)}m(t+r)\dd I_r
 +g(T,X_{H-})
 \right]\le N(t,x).
\]

It remains to include a possible intervention at maturity. Since $X_H=X_{H-}-\Delta D_H+\Delta I_H$, the terminal gradient bounds
$f(T)\le g_x(T,\cdot)\le m(T)$ give, by the same jump calculation,
\[
g(T,X_H)-g(T,X_{H-})
 +f(T)\Delta D_H-m(T)\Delta I_H\le0.
\]
Adding this inequality and using
\[
\int_{[0,H]}f(t+r)\dd D_r
 =\int_{[0,H)}f(t+r)\dd D_r+f(T)\Delta D_H,
\]
together with the analogous identity for $I$, proves $J(t,x;D,I)\le N(t,x)$. Since $(D,I)$ was arbitrary, we have
\begin{equation}\label{eq:VleN}
V(t,x)\le N(t,x).
\end{equation}

Next, we consider the reflected policy $(D^b,I^a)$. Its state remains in $[a(t+r),b(t+r)]$, where $F=0$. The regulators are continuous on
$(0,H]$, and the complementarity condition \eqref{eq:complementarity} gives
\[
N_x(t+r,X_{r-})=f(t+r)
   \quad\dd D_r^b\text{-a.e.},\qquad N_x(t+r,X_{r-})=m(t+r)
   \quad\dd I_r^a\text{-a.e.}
\]
on $(0,H)$. Hence, both continuous-control contributions vanish.

The possible initial intervention also has zero net contribution.
Indeed, if $x<a(t)$, then
\[
N(t,a(t))-N(t,x)-m(t)(a(t)-x)=0,
\]
while, if $x>b(t)$,
\[
N(t,b(t))-N(t,x)+f(t)(x-b(t))=0.
\]
There are no subsequent jumps. Consequently, the preceding
preterminal inequality is an equality for the reflected policy.
Letting $\varepsilon\downarrow0$ gives
\[
\E\left[
 \int_{[0,H)}f(t+r)\dd D_r^b
 -\int_{[0,H)}m(t+r)\dd I_r^a
 +g(T,X_{H-})
 \right]=N(t,x).
\]
Since the regulators are continuous at $H$, they have no atom at
maturity and $X_H=X_{H-}$. Thus
\[
M(t,x)=N(t,x).
\]

Combining this equality with \eqref{eq:MleV} and
\eqref{eq:VleN} gives $M(t,x)=V(t,x)=N(t,x)$. The derivative and boundary identities follow from \eqref{eq:N}.
At $x=0$, continuity of $v(t,\cdot)$ and $v(t,0)=m(t)$ give the
right derivative. Finally, Proposition~\ref{prop:terminal-envelope}
gives $V(T,x)=g(T,x)$. This completes the proof of Theorem~\ref{thm:theorem-main}. \hfill $\square$

\section{A Canonical Class with Proactive Capital Injection}
\label{sec:canonical}
The verification results in Section~\ref{sec:value-characterization} rely on Assumption~\ref{assumption3_1}.
We now verify this assumption for a specific class of financing costs and terminal payoffs. This yields a concrete class for which the required boundary regularity follows from explicit assumptions
on the model primitives.

Let $r>0$ and $\eta>0$, and set
\begin{equation}\label{eq:canonical-f}
 f(t)=\eta e^{-rt},\qquad m(t)=e^{-rt}\kappa(t),
 \qquad 0\le t\le T,
\end{equation}
where $\kappa\in C^2([0,T])$ satisfies
\begin{equation}\label{eq:canonical-kappa}
 \kappa(t)>\eta,\qquad \kappa'(t)-r\kappa(t)>0,
 \qquad 0\le t\le T.
\end{equation}
Thus, $f'(t)<0$ and $m'(t)>0$: the marginal dividend payment decreases in valuation units, whereas the marginal financing cost increases.
The second condition in \eqref{eq:canonical-kappa} requires growth in the financing costs to
dominate discounting.

Fix $0\le A_\star<B_\star<\infty$ and prescribe the terminal
marginal payoff by
\begin{equation}\label{eq:canonical-gx}
 g_x(T,x)=
 \begin{cases}
 m(T),&0\le x\le A_\star,\\[1mm]
 m(T)-\dfrac{m(T)-f(T)}{B_\star-A_\star}(x-A_\star),
       &A_\star<x<B_\star,\\[2mm]
 f(T),&x\ge B_\star.
 \end{cases}
\end{equation}
For a finite constant $g(T,0)\ge 0$, define
\begin{equation}\label{eq:canonical-g}
 g(T,x)=g(T,0)+\int_0^x g_x(T,y)\dd y.
\end{equation}
This specification gives a concave terminal reserve value
with a nonincreasing marginal value of liquidity.
The marginal value equals the terminal financing cost on
$[0,A_\star]$, decreases linearly on $(A_\star,B_\star)$,
and equals the marginal dividend reward for reserves at
or above $B_\star$.
The terminal payoff may be interpreted as a continuation
value assigned to liquidity at the end of the planning horizon.

The terminal marginal payoff in \eqref{eq:canonical-gx} is bounded, nonnegative, nonincreasing, and Lipschitz continuous. Thus, $g(T,\cdot)$ is
continuously differentiable, increasing, and concave, with
$f(T)\le g_x(T,x)\le m(T)$. Its limiting marginal value as $x\to \infty$ is $f(T)$,
and
\[
 \max\left\{\sup_{t\le s\le T}f(s),\,
                  \lim_{x\to\infty}g_x(T,x)\right\}
 =f(t)<m(t).
\]
Therefore, Assumptions~\ref{assumption2_1}
and~\ref{assumption2_2} hold.

\begin{theorem}
\label{thm:canonical-regularity}
Under \eqref{eq:canonical-f}--\eqref{eq:canonical-g}, the game value
$v$ is continuous on $[0,T]\times[0,\infty)$ and, for every
$1<p<\infty$,
\begin{equation}\label{eq:canonical-v-regularity}
 v\in W^{1,2}_{p,\mathrm{loc}}
           \big((0,T)\times(0,\infty)\big).
\end{equation}
The associated boundaries are finite and continuous on $[0,T)$,
with
\begin{equation}\label{eq:canonical-ab}
 0\le a(t)<b(t)<\infty,\qquad 0\le t<T,
\end{equation}
and terminal limits
\begin{equation}\label{eq:terminal-ab}
 \lim_{t\uparrow T}a(t)=A_\star,
 \qquad \lim_{t\uparrow T}b(t)=B_\star.
\end{equation}
Hence, Assumption~\ref{assumption3_1} holds. The control value and the optimal reflected policy are given by Theorem~\ref{thm:theorem-main}.
\end{theorem}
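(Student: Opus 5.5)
The plan is to work with the normalized value $u(t,x):=e^{rt}v(t,x)$. The two obstacles then become $\eta$ and $\kappa(t)$, and the operator becomes $\mathcal L-r$. The sign conditions in \eqref{eq:canonical-kappa} give strictly negative source terms in both contact regions:
\[
\mathcal L f=f'=-r f<0,\qquad \mathcal L m=m'=e^{-rt}(\kappa'-r\kappa)>0 .
\]
Since the obstacles do not depend on $x$, a first-order argument shows that they can be touched only in the sign-compatible way. I would proceed in four steps.

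\textbf{Step 1: continuity.} First establish continuity of $v$ on all of $[0,T]\times[0,\infty)$, including the corner $(T,0)$. Because $g_x(T,0)=m(T)$, the corner incompatibility of Remark~\ref{rem:terminal-corner} disappears. Combining the uniform boundary limits of Proposition~\ref{prop:viscosity-characterization} with the Lipschitz continuity of $g_x(T,\cdot)$, one gets a modulus of continuity in $x$ that is uniform up to $x=0$. The coupling from the proof of Proposition~\ref{prop:monotonicity} gives
\[
0\le v(t,x)-v(t,x+h)\le \E\bigl[\,|\text{payoff difference}|\,\bigr],
\]
and this is bounded by $\|g_x'\|_\infty h+(m_{\max})\Pp(S(x)<H\le S(x+h))$. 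For the time modulus, use the Lipschitz dependence of $f$ and $m$ on $t$ and the Hölder continuity of $H\mapsto$ law of $A_H$. Together these yield joint continuity. The comparison principle (Lemma~\ref{lem:halfline-comparison}) then identifies $v$ as the unique continuous viscosity solution.

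\textbf{Step 2: regularity.} Since $v$ is continuous and bounded, it coincides on the open set $\mathcal C$ with the classical solution of $\mathcal Lv=0$. The double-obstacle problem \eqref{eq:double-obstacle} has $C^1$ obstacles that are ordered strictly ($m>f$) and have bounded $\mathcal L$-images. Standard penalization or Bensoussan--Lions theory for the double obstacle problem therefore gives $v\in W^{1,2}_{p,\mathrm{loc}}$ for every $p<\infty$, with $|\mathcal Lv|\le\max(\|f'\|_\infty,\|m'\|_\infty)$. Uniqueness from Step 1 identifies the penalized limit with the game value.

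\textbf{Step 3: finiteness and separation of the boundaries.} Next prove $0\le a<b<\infty$. Both contact sets are intervals by Proposition~\ref{prop:monotonicity}, and $a<b$ holds pointwise because $f<m$ and $v$ is continuous. For $b(t)<\infty$, start far out at large $x$, where absorption is unlikely and $g_x(T,A_H)$ is close to $f(T)$. Choose stopping immediately and compare with any strategy that waits: since $f'<0$, waiting loses at least of order $\int f'$. A quantitative estimate would show that $v(t,x)=f(t)$ once
\[
x\ge B_\star+C\Bigl(1+\sqrt{T-t}\,\log\tfrac{1}{\text{gap}}\Bigr).
\]
A cleaner route is to construct an explicit supersolution equal to $f$ beyond some level, for instance $f(t)+$ a bump supported on $[0,B_\star+K]$, and apply comparison. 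For the bound $a(t)\le A_\star$, do the symmetric thing: build a subsolution or supersolution pair showing that $v<m$ for $x>A_\star$ near $T$. The lower envelope of the two obstacles gives $a\ge0$.

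\textbf{Step 4: continuity of the boundaries and terminal limits.} This is the main obstacle. Monotonicity in $t$ is not available directly, so I would prove continuity of $a$ and $b$ from strict monotonicity of $v$ in $x$ inside $\mathcal C$. Argue by contradiction: if $v(t,\cdot)$ were flat at level $f$ on an interval below $b(t)$, the strong maximum principle would force $\mathcal Lv=0$ there, which contradicts $\mathcal L f=f'<0$. The same argument applies to $m$. Hence the sets $\{v=f\}$ and $\{v=m\}$ are closed with no flat pieces inside $\mathcal C$. Combined with joint continuity of $v$, the same implicit-function type argument used in the literature on $\{v(t,x)=f(t)\}$ gives continuity of $a$ and $b$.

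For the terminal limits, use continuity of $v$ at $t=T$ together with the explicit $g_x$. For $x\in(A_\star,B_\star)$ we have $g_x(T,x)\in(f(T),m(T))$ strictly, so $(t,x)\in\mathcal C$ near $T$. This gives $\limsup a\le A_\star$ and $\liminf b\ge B_\star$. The reverse inequalities come from Proposition~\ref{prop:intervention-sign}-type local arguments. Because $m'>0$ and $g_x=m(T)$ on $[0,A_\star]$, stopping immediately is strictly better for the minimizer near $T$, giving $v=m$ there. Likewise, because $f'<0$, $v=f$ for $x\ge B_\star$ near $T$. These estimates need a local barrier that is uniform in $t\uparrow T$. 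Since $A_\star<B_\star$, the limits satisfy $a_T<b_T$, which completes Assumption~\ref{assumption3_1}. Theorem~\ref{thm:theorem-main} then applies.
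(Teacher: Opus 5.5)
\noindent Steps 1--3 are broadly workable. The penalization in Step 2 is the paper's route. A bump barrier $f+\phi$, with $\phi(t,\cdot)$ vanishing beyond a fixed level, does give $b(t)<\infty$ by comparison. Two small corrections:
\begin{enumerate}[label=(\roman*)]
\item In the Step 1 coupling, after $S(x)$ the shifted game continues from $(t+S(x),h)$. The error term is therefore $\E\bigl[(m(t+S(x))-v(t+S(x),h))\ind_{\{S(x)<H\}}\bigr]$, not the survival probability you wrote. Controlling it up to the corner $(T,0)$ is where $G(0)=m(T)$ is used.
\item The bound ``$a(t)\le A_\star$'' is false in general: the baseline computation has $a_h(0)\approx1.53>A_\star=0.5$. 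Only $\limsup_{t\uparrow T}a(t)\le A_\star$ holds.
\end{enumerate}

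\noindent The genuine gap is in Step 4. The configuration to exclude is, for example, $b(t_0)<b(t_0+)$. Then for an interval $I\subset(b(t_0),b(t_0+))$ one has $v(t_0,\cdot)=f(t_0)$ on $I$, while $(t_0,t_0+h]\times I\subset\mathcal C$. Your argument about flatness at level $f$ below $b(t)$ does not touch this, since $v>f$ below $b(t)$ by definition. A maximum principle for $v$ or $v-f$ cannot work either. On the rectangle, $\mathcal L(v-f)=-f'>0$, so $v-f$ is a subsolution of the backward equation. Indeed, the constant $v\equiv f(t_0)$ solves $\mathcal Lv=0$ locally, equals $f$ on $\{t_0\}\times I$, and exceeds $f(t)$ for $t>t_0$. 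So the sign of $\mathcal Lf$ points the wrong way. An implicit-function argument is also unavailable, because smooth fit gives $v_x=0$ on the free boundary.

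\noindent The missing idea is to work with $v_x$:
\begin{itemize}
\item Because the obstacles do not depend on $x$, $v_x$ solves $\mathcal Lv_x=0$ in $\mathcal C$ with no source term, and $v_x\le0$.
\item $v_x<0$ throughout $\mathcal C$. This follows from It\^o's formula (or the strong maximum principle), using that $v(t,\cdot)$ falls from $m(t)$ to $f(t)$ across the band.
\item But $v_x(t_0,\cdot)=0$ on $I$, so $0=v_x(t_0,x)=\E\bigl[v_x(t_0+h\wedge\tau_I,A_{h\wedge\tau_I}(x))\bigr]<0$, a contradiction.
\end{itemize}
This needs continuity of $v_x$ up to $t_0$, which is where $p>3$ enters.

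\noindent Even this argument removes only one type of jump. It also needs the whole rectangle to lie in $\mathcal C$, that is, the existence of $b(t_0+)$ and $a(t_0+)$. Continuity of $v$ gives only semicontinuity: $b$ is lower and $a$ upper semicontinuous. In the paper, left continuity and the existence of right limits come from a quantitative one-sided estimate, Lemma~\ref{lem:app-displacement}:
\begin{itemize}
\item $b(t)\le b(s)+C(s-t)^{1/4}$ and $a(t)\ge a(s)-C(s-t)^{1/4}$.
\item It is proved by letting one player wait until $(s-t)\wedge S(x)$. The waiting cost $c\,\E[\tau]$, from $f'\le-c$ and $m'\ge c$, is compared with the BDG bound $\Pp(\sup_{r\le\tau}|A_r(x)-x|\ge\ell)\le K_T(s-t)\,\E[\tau]/\ell^4$.
\end{itemize}
Your proposal has no substitute for this estimate. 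With $s=T$, the same estimate gives the reverse terminal inequalities $\liminf_{t\uparrow T}a(t)\ge A_\star$ and $\limsup_{t\uparrow T}b(t)\le B_\star$. You note that a barrier uniform as $t\uparrow T$ is needed, but you do not provide one. Arguments like Proposition~\ref{prop:intervention-sign} and Remark~\ref{rem:timing-direct-proof} give only strict inequalities $v>f$ or $v<m$, never contact. A bump with a fixed support edge gives $b\le B_\star+K$ for a fixed $K$, which is finiteness, not convergence.
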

\begin{proof}
Lemma~\ref{lem:app-regularity} verifies
Assumption~\ref{assumption3_1}\ref{ass:B2}; in fact, it proves the
required Sobolev regularity for every $1<p<\infty$, as well as
continuity on the closed state--time domain.
Lemma~\ref{lem:app-displacement} proves that both contact levels are
finite and provides the one-sided comparison estimates needed for
their temporal regularity. Lemma~\ref{lem:app-continuity} then proves
that $a$ and $b$ are continuous on $[0,T)$, strictly separated, and
satisfy
\[
a(T-)=A_\star,\qquad b(T-)=B_\star.
\]
Thus Assumption~\ref{assumption3_1}\ref{ass:B1} holds with
$a_T=A_\star$ and $b_T=B_\star$. Since $A_\star<B_\star$,
Assumption~\ref{assumption3_1} is fully verified, and the remaining conclusions follow from Theorem~\ref{thm:theorem-main}.

\end{proof}

Extend the boundaries continuously by $a(T)=A_\star$ and
$b(T)=B_\star$. Compactness gives
\begin{equation}\label{eq:canonical-uniform-gap}
 \min_{0\le t\le T}\bigl(b(t)-a(t)\bigr)>0.
\end{equation}
Thus, the reflected strategy is admissible by
Proposition~\ref{prop:sk-admissible}, and the integral representation
and coupled boundary equations in
Proposition~\ref{prop:v-representation} apply.

\section{Numerical Analysis}\label{sec:numerical}

In this section, we solve the marginal Dynkin game numerically on a truncated spatial domain. For the canonical examples, we also recover the value of the dividend and capital-injection problem using Theorem~\ref{thm:theorem-main}.
Throughout the numerical examples, we normalize the terminal payoff by
setting $g(T,0)=0$.
The parameter values are chosen to illustrate the model\textquotesingle s mechanisms and are not calibrated to a particular firm, industry, or data set.

Let $R=X_{\max}>B_\star$ and introduce the uniform grid
\[
    x_i=i\Delta x,\qquad i=0,\ldots,N_x,
    \qquad
    t_n=n\Delta t,\qquad n=0,\ldots,N_t,
\]
where
\[
    \Delta x=\frac{R}{N_x},
    \qquad
    \Delta t=\frac{T}{N_t}.
\]
For brevity, write $f_n=f(t_n)$ and $m_n=m(t_n)$. The boundary and
terminal conditions are
\[
    v_0^n=m_n,\qquad
    v_{N_x}^n=f_n,
    \qquad n=0,\ldots,N_t-1,
\]
and
\[
    v_i^{N_t}=g_x(T,x_i),
    \qquad i=0,\ldots,N_x,
\]
where the terminal data satisfy
\[
    f(T)\leq g_x(T,x_i)\leq m(T).
\]
The left boundary represents absorption at the solvency level, while
the artificial right boundary is chosen sufficiently far inside the
dividend region.

Starting from the terminal data at $t=T$, we compute $v^n$ backward in time.
At each interior node, the fully implicit double-obstacle problem is
\begin{equation*}
    \max\left\{
        v_i^n-m_n,\,
        \min\left[
            v_i^n-f_n,\,
            \frac{v_i^n-v_i^{n+1}}{\Delta t}
            -(\mathcal L_hv^n)_i
        \right]
    \right\}=0,
    \qquad i=1,\ldots,N_x-1,
\end{equation*}
where $\mathcal L_h$ is the centered finite-difference approximation of the spatial operator $\mu\partial_x + (\sigma^2/2)\partial_{xx}$. Hence, the continuation equation holds whenever
$f_n<v_i^n<m_n$, while the lower and upper obstacles identify the
dividend and financing regions, respectively. The resulting discrete
complementarity problem is solved by projected successive
over-relaxation; implementation details are provided in
Appendix~\ref{app:numerical-details}.

At each preterminal time point, the numerical financing and dividend
boundaries are identified by
\begin{align*}
    a_h(t_n)
    &=
    \max\left\{
        x_i:
        v_j^n\geq m_n-\varepsilon_c
        \text{ for every }j\leq i
    \right\},
    \\
    b_h(t_n)
    &=
    \min\left\{
        x_i:
        v_j^n\leq f_n+\varepsilon_c
        \text{ for every }j\geq i
    \right\}.
\end{align*}

For the canonical class, the value of the original control problem is
reconstructed from the numerical marginal value. Its value at the
dividend boundary is
\begin{align}
    B_h(t_n)
    ={}&
    g(T,B_\star)
    +\mu\int_{t_n}^{T}f(s)\,\dd s
    +f(t_n)b_h(t_n)-f(T)B_\star
    \notag\\
    &\quad
    +\int_{t_n}^{T}f'(s)b_h(s)\,\dd s,
    \label{eq:numerical-value-anchor}
\end{align}
and the value at each spatial grid point is recovered through
\begin{equation}
\label{eq:numerical-value-reconstruction}
    V_h(t_n,x_i)
    =
    B_h(t_n)
    -
    \int_{x_i}^{b_h(t_n)}v_h(t_n,y)\,\dd y.
\end{equation}
Thus, $B_h(t_n)$ supplies the additive normalization that cannot be
determined from the marginal game value alone. Numerical integration,
boundary classification, truncation control, and consistency checks are
described in Appendix~\ref{app:numerical-details}.

\subsection{Baseline results}
\label{subsec:numerical-illustrations}

The baseline parameters are
\[
T=5,\qquad \mu=0.05,\qquad \sigma=0.15,\qquad
r=0.05,\qquad \eta=1,\qquad \kappa(t)=1.5+t,
\]
with $A_\star=0.5$, $B_\star=2$, and $g(T,0)=0$.
These parameters satisfy the canonical assumptions of
Section~\ref{sec:canonical}. In particular,
\[
m'(t)=e^{-0.05t}(0.925-0.05t)>0,
\qquad t\in[0,5].
\]
For $R=5$, $N_x=800$, and $N_t=1600$, the computed
boundaries satisfy
\[
a_h(0)=1.53125,\qquad b_h(0)=2.04375,
\]
and
\[
\min_n\bigl(b_h(t_n)-a_h(t_n)\bigr)=0.51250,
\qquad
\max_n b_h(t_n)=2.17500.
\]

Figure~\ref{fig:canonical-baseline} displays the computed boundaries
in panel~(a) and the reconstructed value function for the dividend
and capital-injection problem in panel~(b). When the reserve lies
below $a(t)$, the firm injects capital to restore it to $a(t)$.
When the reserve exceeds $b(t)$, the excess is distributed as
dividends. No intervention occurs while the reserve remains between
the two boundaries. The resulting policy reflects both the high
marginal terminal value of reserves at low reserve levels and the
increasing path of financing costs.

\begin{figure}[t]
\centering
\includegraphics[width=\textwidth]
{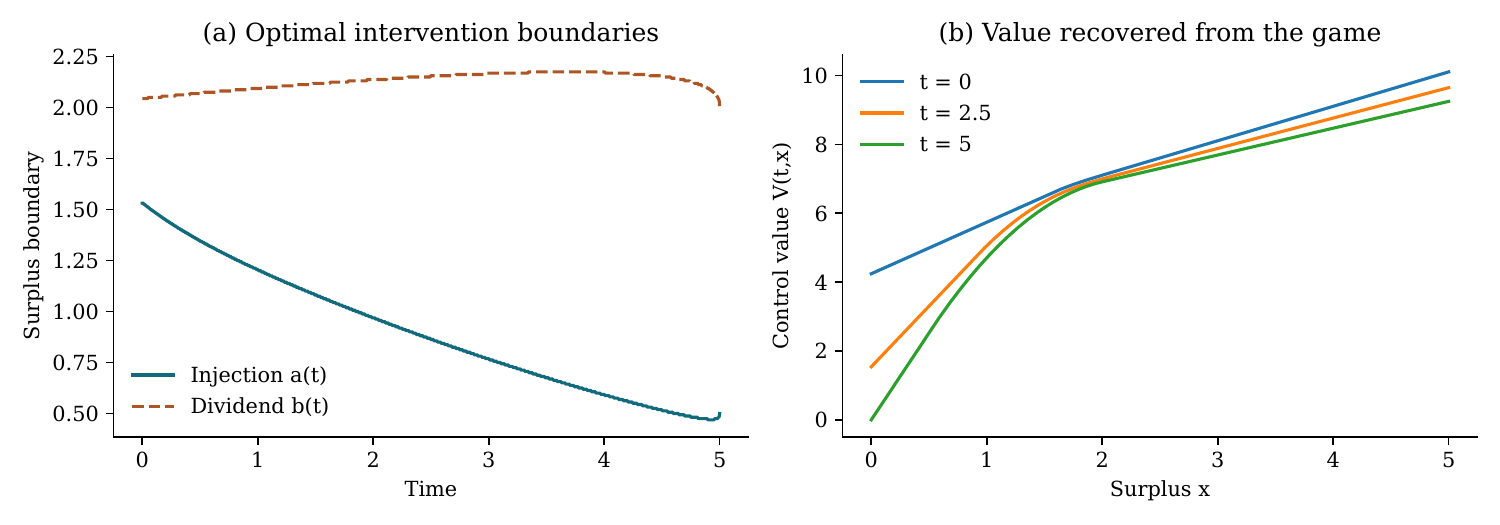}
\caption{Baseline results: (a) optimal intervention boundaries over time; (b) the reconstructed shareholder value at $t=0$, $2.5$, and $5$.}
\label{fig:canonical-baseline}
\end{figure}

\begin{table}[t]
\centering
\caption{Baseline grid refinement on the spatial domain $[0,5]$, with $T=5$. The evaluation point
$x=1.8$ lies in the initial continuation region.}
\label{tab:canonical-refinement}
\begin{tabular}{rrrrrr}
\toprule
$N_x$ & $N_t$ & $a_h(0)$ & $b_h(0)$
& $v_h(0,1.8)$ & $V_h(0,1.8)$\\
\midrule
200 & 400  & 1.52500 & 2.05000 & 1.1306167 & 6.8935967\\
400 & 800  & 1.53750 & 2.05000 & 1.1306285 & 6.8938616\\
800 & 1600 & 1.53125 & 2.04375 & 1.1306051 & 6.8939790\\
\bottomrule
\end{tabular}
\end{table}

Table~\ref{tab:canonical-refinement} shows that the value estimates are
stable under grid refinement. The extracted boundaries exhibit
variation at approximately the scale of the spatial mesh, as expected
for grid-based boundary identification. Every grid specification
satisfies the obstacle bounds and the required spatial monotonicity.

We also vary the boundary-detection tolerance between
$\varepsilon_c=10^{-8}$ and $10^{-6}$. On the finest grid (i.e., $N_x=800, N_t=1600$), each
reported boundary changes by at most one spatial step, namely
$0.00625$. To separate truncation effects from mesh effects, we keep $\Delta x=0.00625$ and $N_t=1600$ fixed and use $(R,N_x)=(4,640),(5,800),(6,960)$. All three grids give $a_h(0)=1.53125$ and $b_h(0)=2.04375$.

As an independent numerical check, we simulate a discrete-projection
approximation of the reflected strategy, starting from the initial state $(t,x)= (0,1.8)$,
using $50{,}000$ sample paths. At each simulation date, the reserve
is first updated by the Brownian surplus increment and then projected
onto the interpolated continuation band. The associated dividend and
capital-injection cash flows are recorded directly, after which the
terminal payoff is added.
Using $500$, $1000$, and $2000$ time steps gives estimated payoffs
of $6.89356$, $6.89441$, and $6.89377$, respectively, with Monte
Carlo standard errors of $0.00176$, $0.00177$, and $0.00176$.
All three estimates are statistically consistent with the reconstructed
finite-difference value $V_h(0,1.8)=6.893979$.

\Needspace{6\baselineskip}
\subsection{Comparisons with a fixed terminal payoff}\label{subsec:compareterminal}

We next vary these illustrative model primitives on a grid with $N_x=600$ and
$N_t=1200$, while holding the baseline terminal payoff fixed. The
drift and volatility parameters take values
\[
\mu\in\{0.02,0.05,0.08\},
\qquad
\sigma\in\{0.15,0.20,0.30\}.
\]
For the financing-cost experiments, define $m_0(t)=e^{-0.05t}(1.5+t)$ and consider
\[
m_\alpha(t)
=
m_0(t)+\alpha(1-t/T),
\qquad
\alpha\in\{-0.3,0,0.3\},
\]
and
\[
m_\beta(t)
=
m_0(t)+\beta\frac{t(T-t)}{T},
\qquad
\beta\in\{-0.4,0,0.4\}.
\]
Both perturbation families preserve $m(T)$, and therefore leave the
terminal payoff and its compatibility conditions unchanged. The
$\alpha$-family changes the current financing cost, with the
perturbation tapering linearly to zero at maturity. The
$\beta$-family preserves both the initial and terminal financing
costs while modifying financing conditions at intermediate dates.

All the perturbed cost functions satisfy $m>f$ and $m'>0$. Indeed,
for the baseline path,
\[
\min_{t\in[0,5]}m_0'(t)
=
0.675e^{-0.25}
>
0.525.
\]
The derivative of the $\beta$-perturbation has absolute value at most
$0.4$, while that of the $\alpha$-perturbation has absolute value
at most $0.06$. Hence, all the financing-cost paths remain strictly
increasing. Moreover, each can be written as
$m(t)=e^{-rt}\kappa(t)$, where $\kappa\in C^2$ satisfies
\eqref{eq:canonical-kappa}. Theorem~\ref{thm:canonical-regularity}
therefore applies to every experiment in this subsection. The results
are reported in Figure~\ref{fig:canonical-comparative-statics} and
Table~\ref{tab:canonical-comparative-statics}.

\begin{figure}[htt]
\centering
\includegraphics[width=\textwidth]
{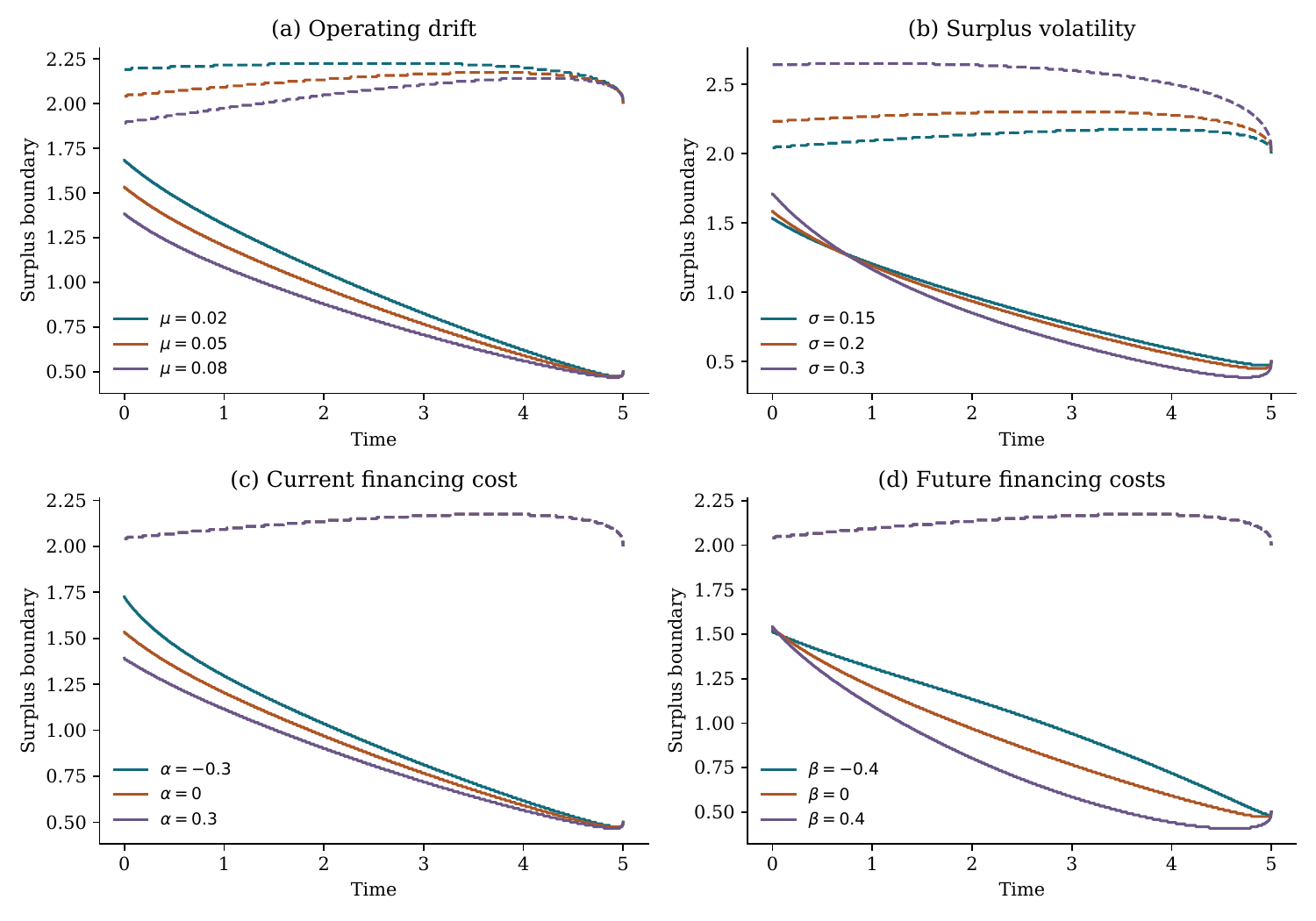}
\caption{Comparative statics with a common terminal payoff. Solid
curves represent the lower boundary $a(t)$, and dashed curves
represent the upper boundary $b(t)$. The financing-cost
perturbations are defined in Section \ref{subsec:compareterminal}.}
\label{fig:canonical-comparative-statics}
\end{figure}

\begin{table}[ht]
\centering
\caption{Initial-boundary comparisons with a fixed terminal payoff.
Each row reports the results at the lowest and highest parameter values, with boundaries extracted on the spatial grid.}
\label{tab:canonical-comparative-statics}
\begin{tabular}{lrrr}
\toprule
Parameter change
& $a_h(0)$ & $b_h(0)$ & $b_h(0)-a_h(0)$\\
\midrule
$\mu:0.02\to0.08$
& $1.6833\to1.3833$
& $2.1917\to1.8917$
& $0.5083\to0.5083$\\
$\sigma:0.15\to0.30$
& $1.5333\to1.7083$
& $2.0417\to2.6417$
& $0.5083\to0.9333$\\
$\alpha:-0.3\to0.3$
& $1.7250\to1.3917$
& $2.0417\to2.0417$
& $0.3167\to0.6500$\\
$\beta:-0.4\to0.4$
& $1.5167\to1.5417$
& $2.0417\to2.0417$
& $0.5250\to0.5000$\\
\bottomrule
\end{tabular}
\end{table}

Economically, stronger operating inflows reduce the need to hold reserves; hence, a higher drift shifts both initial intervention boundaries downward. This is consistent with a spatial translation of the marginal problem. In the canonical setting, fix the other primitives, choose a reference drift $\mu_0$, put $\delta=\mu-\mu_0$, and extend $v^{\mu_0}(t,y)$ by $m(t)$ for $y<0$. If $a^{\mu_0}(s)>0$ and $a^{\mu_0}(s)-\delta(T-s)>0$ for all $0\le s<T$, then the change of variable $y=x+\delta(T-t)$ preserves the double-obstacle equation, the terminal data and the boundary value at zero. Uniqueness therefore gives $a^\mu(t)=a^{\mu_0}(t)-\delta(T-t)$, and $ b^\mu(t)=b^{\mu_0}(t)-\delta(T-t)$. Both boundaries then move by the same amount, as illustrated in Table~\ref{tab:canonical-comparative-statics}.
 In addition, higher volatility raises both the initial capital injection and
dividend boundaries and widens the continuation band, with a
substantially larger response at the dividend boundary.

The financing-cost experiments admit a more specific interpretation. In these experiments, increasing $\alpha$ raises the current cost of capital injection and substantially lowers the computed initial injection boundary, consistent with a weaker incentive to issue equity immediately. The corresponding change in the initial dividend boundary is smaller than the spatial grid resolution.

In the $\beta$-experiment, the initial and terminal financing costs
are unchanged, whereas a larger $\beta$ makes financing more expensive
at all intermediate dates. Proposition~\ref{prop:cost-comparison}
therefore gives the relevant ordering of the intervention boundaries.
The computed increase in $a_h(0)$ is consistent with an incentive to
prefund more expensive future capital needs. The numerical effect is
considerably smaller than in the $\alpha$-experiment at the initial time. The two experiments separate
the effect of expensive current issuance from the incentive to prefund
more expensive future issuance, while keeping the incentives generated by the terminal payoff fixed.

\subsection{Passive and nonmonotone financing costs}

We finally examine financing-cost paths outside the canonical class. These examples describe the stopping regions of the marginal game. Their interpretation as reflected control policies requires Assumption~\ref{assumption3_1}. Keeping $T,\mu,\sigma,r,\eta$ at their baseline values, we consider
\[
m_\gamma(t)=e^{-rt}(1.5+\gamma t),
\qquad
\gamma\in\{0,0.09\}.
\]
Both cases use the same terminal marginal payoff,
\[
G(x)
=
f(T)+\bigl(1.5e^{-rT}-f(T)\bigr)(1-x/2)^+,
\]
together with the normalization $g(T,0)=0$.

The case $\gamma=0$ corresponds to a constant nominal financing cost, with $m_0'(t)=-1.5r e^{-rt}<0$. For $\gamma=0.09$, $m_\gamma'(t)
=e^{-rt}\bigl(\gamma-r(1.5+\gamma t)\bigr)$, so the discounted financing cost first increases and then decreases, although the nominal cost $1.5+0.09t$ increases throughout the horizon. Its turning
date is
\[
t^*
=
\frac{1}{r}-\frac{1.5}{\gamma}
=
3.3333.
\]
Moreover, $G(0)=1.5e^{-rT}$ equals $m_0(T)$ when $\gamma=0$, but is strictly below
$m_{0.09}(T)$ when $\gamma=0.09$. Hence, the common terminal payoff
is compatible with both financing-cost specifications.

Figure~\ref{fig:beyond-canonical} reports the resulting optimal stopping
boundaries. In the decreasing-cost case, the positive-reserve capital-injection region is empty, with $a_h(t)=0$ for all $t$. The nonmonotone example further shows that
a locally increasing financing cost, $m_\gamma'(t)>0$ for
$t<t^*$, does not by itself imply the existence of a positive
capital-injection boundary over that interval.  Hence, $m'(t)\ge 0$ is a necessary, but not sufficient, condition for proactive capital injection.

\begin{figure}[t]
\centering
\includegraphics[width=\textwidth]
{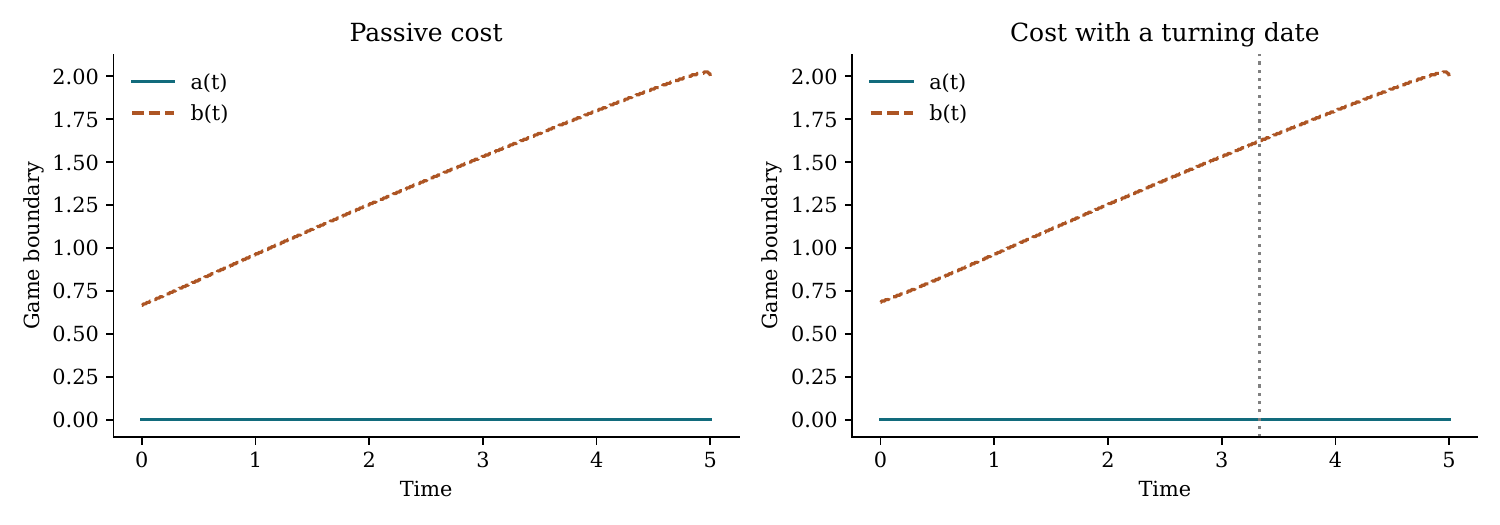}
\caption{Optimal stopping boundaries for a decreasing financing cost
and for a financing cost that first increases and then decreases. Both
panels use the same terminal payoff. The vertical dotted line marks
the turning date $t^*=3.3333$.}
\label{fig:beyond-canonical}
\end{figure}

\section{Conclusion}\label{sec:conclusion}

This paper studies how time-varying financing conditions alter the joint
management of dividends and capital injections over a finite horizon. The
main departure from the classical formulation is that capital injection is
an independently timed financing decision rather than an intervention
triggered mechanically by insolvency. When future external financing is
expected to become more expensive, shareholders may optimally inject
capital while the firm still has positive reserves. Under the verification conditions, the optimal policy has two endogenous components: a lower financing boundary and an upper
dividend boundary. Reserves are increased below the former, reduced above
the latter, and left uncontrolled between them.

The marginal value of liquidity provides the link between this economic
mechanism and the mathematical structure of the problem. We characterize
that marginal value through a finite-horizon Dynkin game whose two contact
regions determine the financing and dividend boundaries. This
representation also yields a necessary sign condition for proactive financing. If $a(t)$ denotes the lower financing boundary, then
\[
    a(t)>0 \quad \Longrightarrow \quad m'(t)\geq 0.
\]
Thus, at any date $t$ with $m'(t)< 0$, proactive injection is ruled out at that date. If this strict inequality holds throughout the horizon, the financing boundary remains at zero. An increasing
financing cost, however, is not sufficient by itself: the financing
decision also depends on operating risk, the remaining horizon, the
dividend reward, and the terminal value assigned to liquidity. The
transition from a one-player stopping problem under passive injection to a
Dynkin game under proactive injection therefore reflects a substantive
change in the underlying financing decision.

Under a regular intervention-band condition, we recover the shareholder
value by integrating the game value with respect to reserves. We then verify optimality over general admissible
c\`adl\`ag singular controls and show that the optimal policy is the
two-sided Skorokhod reflection between the financing and dividend
boundaries. In addition, we establish the regular intervention-band condition for a canonical class of primitives with decreasing marginal dividend rewards, increasing marginal financing
costs, and a compatible piecewise affine terminal marginal value.

The comparative statics clarify the distinction between total shareholder
value and the marginal value of liquidity. Higher anticipated financing
costs weakly reduce shareholder value because future recapitalization becomes
more expensive, but they weakly increase the game value, which equals the marginal value of reserves under the verification conditions. Consequently, a higher financing-cost path weakly raises the dividend boundary, encouraging the firm to retain more liquidity. When the
current financing cost is held fixed, higher future costs also weakly raise the current financing boundary, strengthening the incentive to recapitalize
before financing conditions deteriorate. The numerical experiments
illustrate these effects while holding the terminal payoff fixed, and
display both proactive- and passive-injection regimes as well as
nonmonotone financing-cost environments.

Several extensions would broaden the scope of the analysis. One direction
is to develop primitive boundary results for standard linear liquidation
payoffs and for intervention bands that collapse at maturity. These cases
fall outside the regular terminal geometry considered here and may require
different local estimates near the horizon. Another direction is to allow
financing conditions to be stochastic, in which case the financing
environment would become an additional state variable, and the intervention
regions would be multidimensional. Extensions to more general surplus dynamics, fixed financing costs, or endogenous investment opportunities
would further connect the model to corporate liquidity management. The
present analysis provides a benchmark for these developments by showing
how anticipated financing conditions can transform capital injection from
a passive solvency mechanism into a proactive component of optimal
liquidity policy.

%\ifdefined\anonymize
%
%\else
%\section*{Declaration of interests}
%The authors declare that they have no competing interests.
%\section*{Acknowledgments}
%Ran Xu acknowledges support from the National Natural Science Foundation of China under Grant Nos. 12201506 and 12371468.
%\fi

\appendix

\section{Regularity for the canonical specification}
\label{app:canonical-verification}
This appendix verifies Assumption \ref{assumption3_1} for the canonical specification in
Section \ref{sec:canonical}. We retain the notation of Section \ref{sec:game}, in particular
\[
 A_s(x)=x+\mu s+\sigma W_s,\qquad
 S(x)=\inf\{s\geq0:A_s(x)\leq0\},\qquad
 \mathcal L=\partial_t+\mu\partial_x+\frac{\sigma^2}{2}\partial_{xx}.
\]
Write $G(x)=g_x(T,x)$ and set
\[
 c=\min_{0\leq t\leq T}\min\{-f'(t),m'(t)\}>0,
 \qquad
 \Delta_0=\max_{0\leq t\leq T}\bigl(m(t)-f(t)\bigr).
\]
Recall that $G$ is bounded, Lipschitz continuous, and nonincreasing. It equals
$m(T)$ on $[0,A_\star]$ and $f(T)$ on $[B_\star,\infty)$, and is strictly
decreasing between these two levels. Throughout this appendix, $C$ denotes a positive constant that depends only on the model data
and may change from line to line.

\begin{lemma}\label{lem:app-regularity}
The canonical game value $v$ is continuous on $[0,T]\times[0,\infty)$ and
belongs to $W^{1,2}_{p,\mathrm{loc}}((0,T)\times(0,\infty))$ for every
$1<p<\infty$. Its spatial derivative is locally continuous on
$[0,T)\times(0,\infty)$. Moreover, for some $C<\infty$,
\[
 \sup_{x\geq0}|v(T-h,x)-G(x)|\leq C(h+\sqrt h),
 \qquad 0<h\leq T.
\]
\end{lemma}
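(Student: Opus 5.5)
The natural order is to prove the terminal estimate first, because it is also what gives continuity at the corner $(T,0)$. I then obtain continuity from the comparison principle, and the Sobolev regularity from a penalization argument for the double-obstacle problem.

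\emph{Terminal estimate.} Fix $h\in(0,T]$, put $t=T-h$, and let $L$ be the Lipschitz constant of $G$. In the canonical class $f$ is decreasing and $m$ is increasing, and $f(T)\le G\le m(T)$ with $G(0)=m(T)$.

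For the upper bound, let the minimizing player wait, $\delta=\rho$. For any $\tau$ of the maximizer there are three possible payoffs.
\begin{itemize}
\item An early dividend stop pays $f(t+\tau)\le f(T)+Ch\le G(x)+Ch$.
\item Absorption at $S(x)<h$ pays $m(t+S(x))\le m(T)=G(0)=G(A_{S(x)}(x))\le G(x)+L\,|A_{S(x)}(x)-x|$.
\item Maturity pays $G(A_h(x))\le G(x)+L\,|A_h(x)-x|$.
\end{itemize}
Hence
\[
v(T-h,x)\le G(x)+Ch+L\,\E\Big[\sup_{s\le h}|\mu s+\sigma W_s|\Big]\le G(x)+C(h+\sqrt h).
\]

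For the lower bound, let the maximizer wait, $\tau=\rho$. An early financing stop pays $m(t+\delta)\ge m(t)\ge m(T)-Ch\ge G(x)-Ch$. Absorption pays $m(t+S(x))\ge m(T)-Ch\ge G(x)-Ch$. Maturity pays at least $G(x)-L\,|A_h(x)-x|$.

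Both bounds are uniform in $x\ge0$. The compatibility $G(0)=m(T)$ is exactly what makes the absorption term harmless near $x=0$.

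\emph{Continuity.} By Proposition~\ref{prop:viscosity-characterization}, $v^*$ is a subsolution and $v_*$ a supersolution, with the boundary limits \eqref{eq:viscosity-boundary-limit-zero}--\eqref{eq:viscosity-boundary-limit-terminal}. At the corner, the terminal estimate together with continuity of $G$ at $0$ gives $v(s,y)\to G(0)=m(T)$ as $(s,y)\to(T,0)$. Thus $v^*\le v_*$ on the whole parabolic boundary, including $(T,0)$. Lemma~\ref{lem:halfline-comparison} then yields $v^*\le v_*$, so $v$ is continuous on $[0,T]\times[0,\infty)$.

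\emph{Sobolev regularity.} The obstacles $f(t)$ and $m(t)$ do not depend on $x$, satisfy $m-f\ge\min(m-f)>0$, and have $\mathcal Lf=f'$ and $\mathcal Lm=m'$ bounded. On a cylinder $Q'\Subset(0,T)\times(0,\infty)$ with the continuous boundary data $v|_{\partial_pQ'}$, I would consider the penalized equations
\[
\mathcal Lv^\varepsilon+\tfrac1\varepsilon(f-v^\varepsilon)^+-\tfrac1\varepsilon(v^\varepsilon-m)^+=0 .
\]
A maximum-principle argument comparing $v^\varepsilon$ with $f$ and $m$ bounds both penalty terms by $\max\{\|f'\|_\infty,\|m'\|_\infty\}$, uniformly in $\varepsilon$. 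Interior $L^p$ estimates then give uniform local $W^{1,2}_p$ bounds for every $p<\infty$. A subsequence converges, locally uniformly by compact embedding, to a viscosity solution with the same boundary data. A comparison argument on $Q'$, as in Lemma~\ref{lem:halfline-comparison}, identifies this limit with $v$, so $v\in W^{1,2}_{p,\mathrm{loc}}$. Sobolev embedding with $p>3$ then makes $v_x$ locally H\"older continuous.

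\emph{Continuity of $v_x$ at $t=0$.} I would extend $f$ and $m$ in a $C^1$ way to a slightly earlier time interval, so that $t=0$ becomes an interior time of the enlarged game. The extended value restricted to $[0,T]$ coincides with $v$ by the Markov property. The interior estimates then apply near $t=0$ and give continuity of $v_x$ on $[0,T)\times(0,\infty)$.

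I expect the main obstacle to be the uniform bound on the penalty terms and the identification of the penalized limit with the game value. Both rest on the obstacles being spatially constant and strictly separated.
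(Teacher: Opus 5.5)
Your proposal is correct. The terminal estimate is the same as the paper's, including the use of $G(0)=m(T)$ to control the absorption payment and the corner $(T,0)$. The continuity and regularity steps, however, are organized differently.

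\textbf{The paper's route.} The paper solves one penalized problem on the whole half-line, with data $m(t)$ at $x=0$ and $G$ at $t=T$, obtained by truncating in $x$ and using $G(0)=m(T)$ for corner compatibility. It then notes that $v^\varepsilon-C\varepsilon$ is a subsolution and $v^\varepsilon+C\varepsilon$ a supersolution of \eqref{eq:double-obstacle}. A single application of Lemma~\ref{lem:halfline-comparison} gives $\sup|v^\varepsilon-v|\le C\varepsilon$. Interior continuity of $v$ then follows as a uniform limit. Continuity at $(t,0)$ is proved separately by an explicit waiting bound that uses monotonicity of $m$.

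\textbf{Your route.} You obtain continuity first, by applying the comparison lemma to $v^*$ and $v_*$ directly; this is the standard argument that strong comparison implies continuity, and it works once you have the corner limit. You then penalize only on interior cylinders $Q'$ with data $v|_{\partial_pQ'}$.

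\textbf{What each buys.} Your approach is more modular: it never needs a penalized solution on an unbounded domain, nor corner compatibility for that auxiliary problem. The paper's approach gives a global rate in $\varepsilon$ and identifies the limit without any compactness argument.

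\textbf{A step that needs one more line.} Interior $W^{1,2}_p$ bounds and compact embedding give convergence only on compact subsets of $Q'$. So your claim that the limit is ``a viscosity solution with the same boundary data'' is not yet justified, and comparison on $Q'$ needs exactly that. There are two easy fixes:
\begin{itemize}
\item Barriers: sandwich $v^\varepsilon$ between the solutions $w^\pm$ of $\mathcal Lw^\pm=\mp C$ on $Q'$ with data $v$. These give a modulus of continuity at $\partial_pQ'$ that is uniform in $\varepsilon$.
\item Sandwich argument: localize the paper's observation. Since $v^\varepsilon-C\varepsilon$ and $v^\varepsilon+C\varepsilon$ are a sub- and a supersolution with boundary values $v-C\varepsilon$ and $v+C\varepsilon$ on $\partial_pQ'$, comparison on $Q'$ gives $|v^\varepsilon-v|\le C\varepsilon$ there directly, with no subsequence needed.
\end{itemize}

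\textbf{A smaller omission.} Comparison yields continuity only on $\mathcal Q^\circ=(0,T)\times(0,\infty)$. Continuity of $v$ itself at points $(0,x)$ with $x>0$ also requires your time-extension trick, not only continuity of $v_x$ there. In that extension you should also preserve $f<m$.
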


\begin{proof}
We first estimate how close $v(t,x)$ is to $G(x)$ when $t$ is close to $T$.
Let $C_g$ denote the Lipschitz constant of $G$. Fix
$t=T-h$ and $x>0$, and put $\rho=h\wedge S(x)$. If the minimizing player
chooses $\delta=\rho$, any earlier dividend payment is at most
$G(x)+\|f'\|_\infty h$. The maturity payoff differs from $G(x)$ by at most
$C_g|A_h(x)-x|$. On absorption, the payment is
$m(t+S(x))\leq m(T)=G(0)$, and
$x\leq\sup_{r\leq h}|A_r(x)-x|$. Thus, taking the supremum over the other
player's stopping times gives
\[
 v(t,x)-G(x)
 \leq \|f'\|_\infty h
 +C_g\,
   \mathbb E\!\left[\sup_{r\leq h}|\mu r+\sigma W_r|\right].
\]
For the reverse estimate, let the maximizing player choose $\tau=\rho$.
Every capital injection or absorption payment is at least
$m(T)-\|m'\|_\infty h\geq G(x)-\|m'\|_\infty h$, while the maturity
payoff has the same Lipschitz bound. Consequently,
\[
 v(t,x)-G(x)
 \geq-\|m'\|_\infty h
 -C_g\,
   \mathbb E\!\left[\sup_{r\leq h}|\mu r+\sigma W_r|\right].
\]
The Brownian maximal estimate bounds the expectation by $C(h+\sqrt h)$,
where $C$ is independent of $x$ and $h$. At $x=0$,
\[
 |v(T-h,0)-G(0)|=|m(T-h)-m(T)|\leq\|m'\|_\infty h.
\]
Combining these inequalities gives
\[
 \sup_{x\geq0}|v(T-h,x)-G(x)|\leq C(h+\sqrt h),
 \qquad 0<h\leq T.
\]
Thus $v(t,x)$ converges to $G(x)$ as $t\uparrow T$, with the same error
bound for every $x\geq0$. The bound also holds for $h=0$ because
$v(T,x)=G(x)$.

The same waiting strategy for the maximizing player also gives, for $t<T$,
\[
 m(t)-\Delta_0\,\mathbb P\bigl(S(x)\geq T-t\bigr)
 \leq v(t,x)\leq m(t).
\]
Indeed, $m$ is increasing, so only survival to maturity can produce a payment
below $m(t)$. For every fixed $\delta\in(0,T)$, it follows that
\[
 \sup_{0\leq t\leq T-\delta}|v(t,x)-m(t)|
 \leq\Delta_0\,\mathbb P(S(x)\geq\delta)
 \longrightarrow0\qquad\text{as }x\downarrow0.
\]
Since $m$ is continuous, this proves continuity of $v$ at each point
$(t,0)$ with $t<T$.

We also need continuity at $(T,0)$, where time tends to $T$ and the reserve
tends to zero simultaneously. The preceding estimate near $T$ and the
Lipschitz continuity of $G$ give
\[
 \begin{split}
 |v(t,x)-G(0)|
 &\leq |v(t,x)-G(x)|+|G(x)-G(0)|\\
 &\leq C\bigl((T-t)+\sqrt{T-t}\bigr)+C_gx
 \longrightarrow0\qquad\text{as }(t,x)\to(T,0).
 \end{split}
\]
Because $v(T,0)=G(0)=m(T)$, this proves continuity of $v$ at $(T,0)$.

We next prove the asserted Sobolev regularity on
$(0,T)\times(0,\infty)$ by penalization. For
$0<\varepsilon\leq1$, let $v^\varepsilon$ be the bounded continuous solution
on the half-line of
\[
 -\mathcal Lv^\varepsilon
 +\frac{(v^\varepsilon-m)^+}{\varepsilon}
 -\frac{(f-v^\varepsilon)^+}{\varepsilon}=0,
 \qquad
 v^\varepsilon(t,0)=m(t),\quad v^\varepsilon(T,x)=G(x).
\]
For fixed $\varepsilon>0$, the diffusion coefficient is strictly positive
and the penalty terms are Lipschitz continuous in $v^\varepsilon$.
Standard existence and regularity results for semilinear parabolic
boundary-value problems therefore give a bounded solution in
\[
 C\bigl([0,T]\times[0,\infty)\bigr)
 \cap C^{1,2}\bigl((0,T)\times(0,\infty)\bigr).
\]
The solution on $[0,\infty)$ can be obtained by first
solving on bounded intervals, imposing the value $f(t)$ at a right endpoint
beyond $B_\star$, and then letting that endpoint tend to infinity; the
comparison bounds below are independent of the endpoint. Uniqueness
follows from comparison. The conditions at $x=0$ and $t=T$ prescribe the
same value at $(T,0)$, since $G(0)=m(T)$.

Choose $C>\max\{\|f'\|_\infty,\|m'\|_\infty\}$. Substitution and comparison
with $f-C\varepsilon$ and $m+C\varepsilon$ yield
\[
 f-C\varepsilon\leq v^\varepsilon\leq m+C\varepsilon,
 \qquad |\mathcal Lv^\varepsilon|\leq C.
\]
For the second inequality, each penalty is bounded by $C$, and the two
penalties cannot be positive at the same point.

There is also a direct comparison with the game value. The function
$v^\varepsilon-C\varepsilon$ is a subsolution of the double-obstacle
equation \eqref{eq:double-obstacle}: it is at most $m$, and wherever it exceeds $f$ the lower
penalty vanishes, giving $-\mathcal Lv^\varepsilon\leq0$. Similarly,
$v^\varepsilon+C\varepsilon$ is a supersolution: it is at least $f$, and
wherever it lies below $m$ the upper penalty vanishes, giving
$-\mathcal Lv^\varepsilon\geq0$. The first function is below the
prescribed values $m(t)$ at $x=0$ and $G(x)$ at $t=T$, while the second is above them.
The corner ordering follows from $G(0)=m(T)$ and the terminal estimate established above.
Proposition \ref{prop:viscosity-characterization} and the comparison principle in Lemma \ref{lem:halfline-comparison} therefore give
\[
 v^\varepsilon-C\varepsilon\leq v\leq
 v^\varepsilon+C\varepsilon.
\]
In particular,
\[
 \sup_{\substack{0<t<T\\x>0}}|v^\varepsilon(t,x)-v(t,x)|
 \leq C\varepsilon.
\]
As a uniform limit of continuous functions, $v$ is therefore continuous
on $(0,T)\times(0,\infty)$.

For any cylinder $K$ with compact closure in $(0,T)\times(0,\infty)$,
the interior $L^p$ estimate for the linear operator $\mathcal L$, applied
on a slightly larger cylinder, now yields
\[
 \|v^\varepsilon\|_{W^{1,2}_p(K)}\leq C_{p,K},
 \qquad 1<p<\infty,
\]
with $C_{p,K}$ independent of $\varepsilon$. Passing to the limit proves
$v\in W^{1,2}_{p,\mathrm{loc}}((0,T)\times(0,\infty))$. For $p>3$,
parabolic Sobolev embedding in one spatial dimension gives a continuous
representative of $v_x$ on $(0,T)\times(0,\infty)$; see, for example,
\cite{krylov2008lectures} for the linear estimates and embedding results.

Finally, extend $f$ and $m$ smoothly to the left of zero, preserving
$f<m$ and $f'<0<m'$. The preceding argument applies to the game with these
extended time data. Its value agrees with the original value at every
starting time $t\geq0$, because only the data on $[t,T]$ enter the game.
The regularity argument now applies in a neighborhood of each $(0,x)$
with $x>0$, proving continuity of $v$ and $v_x$ at these points as well.
Together with the continuity already proved at $x=0$ and at $t=T$, this
establishes $v\in C([0,T]\times[0,\infty))$ and continuity of $v_x$ on
$[0,T)\times(0,\infty)$.
\end{proof}

\begin{lemma}\label{lem:app-displacement}
Set $a(T)=A_\star$ and $b(T)=B_\star$. Both boundaries are finite for $t<T$
and, for some constant $C>0$, satisfy
\begin{equation}\label{eq:app-displacement}
 \begin{split}
 a(t)&\geq a(s)-C(s-t)^{1/4},\\
 b(t)&\leq b(s)+C(s-t)^{1/4},
 \end{split}
 \qquad 0\leq t<s\leq T.
\end{equation}
In particular,
\begin{equation}\label{eq:app-terminal-bound}
 a(t)\geq A_\star-C(T-t)^{1/4},
 \qquad b(t)\leq B_\star+C(T-t)^{1/4}.
\end{equation}
\end{lemma}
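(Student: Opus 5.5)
The plan is to prove the two inequalities in \eqref{eq:app-displacement} directly from the game, using only the strict monotonicity $f'\le -c<0<c\le m'$ of the obstacles. Finiteness of $a,b$ and the terminal bounds \eqref{eq:app-terminal-bound} then follow by taking $s=T$, with the conventions $a(T)=A_\star$ and $b(T)=B_\star$. The latter are consistent with the contact sets of $v(T,\cdot)=G$, since $G=m(T)$ exactly on $[0,A_\star]$ and $G=f(T)$ exactly on $[B_\star,\infty)$. The two inequalities are mirror images: dividend contact with $f$ decreasing, versus financing contact with $m$ increasing. So I would first treat $b$ in detail and then indicate the changes for $a$.

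For the upper boundary, fix $0\le t<s\le T$, put $h=s-t$ and $\eta=C_0h^{1/4}$, and take $x\ge b(s)+\eta$. I want to show $v(t,x)=f(t)$. Let the minimizing player wait on $[0,h\wedge\rho)$ and, if the game is still alive at $h$, continue with its optimal strategy from $(s,A_h(x))$. By the dynamic programming principle (strong Markov property, as in Proposition~\ref{prop:viscosity-characterization}), for any $\tau$ the expected payoff is at most
\[
 \E\bigl[f(t+\tau)\ind_{\{\tau<h\wedge\rho\}}+v(s,A_h(x))\ind_{\{\tau\ge h,\ \rho> h\}}+m(t+S(x))\ind_{\{S(x)\le h,\ \tau\ge S(x)\}}\bigr].
\]
I would then use three bounds on the three terms:
\begin{itemize}[label=$\cdot$]
\item $f(t+\tau)\le f(t)-c\,\tau$;
\item $v(s,y)=f(s)\le f(t)-ch$ for $y\ge b(s)$, and $v(s,y)\le f(s)+\Delta_0$ otherwise;
\item absorption requires a downward excursion of size at least $\eta$, since $x\ge\eta$.
\end{itemize}
This yields
\[
 v(t,x)-f(t)\le -c\,\E[\tau\wedge h]+(\Delta_0+C)\,\Pp\Bigl(\inf_{r\le h}(\mu r+\sigma W_r)\le-\eta\Bigr),
\]
and the fourth-moment Markov bound makes the probability at most $Ch^2/\eta^4=Ch/C_0^4$.

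The main obstacle is that this alone does not force $v(t,x)=f(t)$. The maximizer may stop early, so the gain $-c\,\E[\tau\wedge h]$ can be small. To decouple the stopping time from the bad event, I would run the argument on the whole slab: set
\[
 \Theta:=\sup\{v(r,y)-f(r):\ r\in[t,s],\ y\ge b(s)+\eta(r)\},
\]
with a margin $\eta(r)$ shrinking linearly from $\eta$ to $\eta/2$ as $r$ runs from $t$ to $s$, and analyse the maximizer's optimal rule $\tau_v$ from Proposition~\ref{prop:game-existence}. On the event that the path stays in the slab, the value process $v-f$ is a supermartingale, because $f$ has drift $\le -c$ and $v$ is a martingale before contact. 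Hence on that event either contact with $f$ occurs, giving zero excess, or the path reaches time $s$ with excess at most $-ch$. Leaving the slab has probability $O(h^2/\eta^4)$ by the same moment bound. This gives $\Theta\le\max\{0,\ -ch+C h/C_0^4\}$ plus a self-referential $O(h/C_0^4)\,\Theta$ term, and choosing $C_0$ large forces $\Theta=0$. The exponent $1/4$ is exactly what this Chebyshev-type balance between the $ch$ penalty and the exit probability produces. If the fourth moment is insufficient, I would fall back on the Gaussian tail, which gives a stronger result than needed.

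For the lower boundary, take $x\le a(s)-\eta$, which is only relevant when $a(s)>\eta$. The symmetric argument lets the maximizer wait, and the relevant directions are reversed:
\begin{itemize}[label=$\cdot$]
\item early financing stops pay $m(t+\delta)\ge m(t)+c\delta$;
\item absorption pays $m(t+S)\ge m(t)$, so it is harmless here;
\item at time $s$, $v(s,y)=m(s)\ge m(t)+ch$ on $[0,a(s)]$, with exit above $a(s)$ of probability $O(h^2/\eta^4)$.
\end{itemize}
The same slab supermartingale argument then gives $v(t,x)=m(t)$, hence $a(t)\ge a(s)-Ch^{1/4}$.
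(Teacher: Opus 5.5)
Your reduction to the game in which the minimizer waits is fine. You also diagnose correctly why your first estimate is not enough: the displacement probability is taken over the whole window $[0,h]$, so it is a fixed positive number, while the maximizer can make $\mathbb E[\tau\wedge h]$ as small as desired.

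\textbf{The slab argument does not close this gap.}
\begin{itemize}
\item Along a path on which $v$ is a martingale, $v-f$ has drift $-f'\ge c>0$. It is therefore a submartingale, not a supermartingale. Moreover, once the minimizer is forced to wait, $v$ need not be a martingale along the path at all.
\item There is no genuine self-referential term $O(h/C_0^4)\,\Theta$. The excess at the exit point is bounded by $\Theta$ only if $\Theta$ is the supremum over the whole closed slab. But from starting points near the lower spatial edge, exit is essentially immediate, so the exit probability is not $O(h/C_0^4)$ uniformly over the slab.
\item What a slab or bootstrap argument actually gives is $\Theta_{\mathrm{inner}}\le p\,\Theta_{\mathrm{outer}}\le p\,\Delta_0$. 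This is small but not zero.
\item Neither a margin shrinking in time nor Gaussian tails over $[0,h]$ changes this. The obstruction is that the bad probability is not proportional to $\mathbb E[\tau]$, not that it is insufficiently small.
\end{itemize}

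\textbf{The missing idea is to apply the moment bound at the stopping time itself.} You already note that the extra payment, at most $\Delta_0$, is collected only if $\tau=\rho$ and a displacement of size $\ell$ has occurred before $\tau$. For a stopping time $\zeta\le h$, BDG together with $\zeta^4\le h^3\zeta$ and $\zeta^2\le h\zeta$ gives
\[
\mathbb P\Bigl(\sup_{r\le\zeta}|A_r(x)-x|\ge\ell\Bigr)\le\frac{1}{\ell^4}\,\mathbb E\Bigl[\sup_{r\le\zeta}|\mu r+\sigma W_r|^4\Bigr]\le\frac{K_T h}{\ell^4}\,\mathbb E[\zeta].
\]
Applying this with $\zeta=\tau$ yields
\[
v(t,x)\le f(t)+\sup_{\tau\in\mathcal T_\rho}\Bigl(-c+\frac{\Delta_0K_Th}{\ell^4}\Bigr)\mathbb E[\tau].
\]
Take $\ell=Ch^{1/4}$ with $\Delta_0K_T<cC^4$. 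The coefficient is then negative, the supremum is $0$ (attained at $\tau=0$), and $v(t,x)=f(t)$ for $x\ge b(s)+Ch^{1/4}$ directly, with no bootstrap. The lower boundary is handled identically, with the maximizer waiting and $\delta$ in place of $\tau$.

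\textbf{Two smaller points.}
\begin{itemize}
\item The $a$-inequality with $s=T$ only bounds $a(t)$ from below. Finiteness of $a$ must come from $a(t)\le b(t)$, which follows from spatial monotonicity and $f<m$.
\item The case $s<T$ uses finiteness of $a(s)$ and $b(s)$, so the case $s=T$ has to be treated first.
\end{itemize}
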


\begin{proof}
For any stopping time $\zeta\leq h\leq T$, the elementary inequality $|u+v|^4\le 8(|u|^4+|v|^4)$
gives
\[
 \sup_{0\le r\le\zeta}|\mu r+\sigma W_r|^4
 \le 8|\mu|^4\zeta^4
     +8\sigma^4\sup_{0\le r\le\zeta}|W_r|^4.
\]
Applying the Burkholder--Davis--Gundy (BDG) inequality at the bounded stopping time $\zeta$ yields
\[
 \mathbb E\!\left[\sup_{0\le r\le\zeta}|W_r|^4\right]
 \le C_{\rm BDG}\,\mathbb E\langle W\rangle_\zeta^2
 =C_{\rm BDG}\,\mathbb E[\zeta^2].
\]
Combining these estimates and absorbing the fixed coefficients into a constant $K$, we obtain
\[
 \mathbb E\!\left[\sup_{0\le r\le\zeta}
                    |\mu r+\sigma W_r|^4\right]
 \le K\big(\mathbb E[\zeta^4]+\mathbb E[\zeta^2]\big).
\]
Since $\zeta\le h\le T$, we have $\zeta^4\le h^3\zeta$ and $\zeta^2\le h\zeta$. Hence, after enlarging the constant, we have
\[
 \mathbb E\!\left[\sup_{0\le r\le\zeta}
                    |\mu r+\sigma W_r|^4\right]
 \le K_T h\,\mathbb E[\zeta],
 \]
 where $K_T = K(1+T^2)$.
 Hence, for every $\ell>0$,
\begin{equation}\label{eq:app-exit}
 \mathbb P\!\left(\sup_{r\leq\zeta}|A_r(x)-x|\geq\ell\right)
 \leq\frac{K_T h}{\ell^4}\,\mathbb E[\zeta].
\end{equation}
The factor $\E[\zeta]$ allows the displacement probability to be compared directly with the waiting-cost term below.

Fix $t<s$, write $h=s-t$ and $\rho=h\wedge S(x)$, and use dynamic
programming at time $s$. Thus the game may be played up to $\rho$, with
payment $v(s,A_h(x))$ if both players wait until $s$ without absorption;
when $s=T$, this payment is $G(A_h(x))$. Suppose for the moment that $a(s)$ and $b(s)$ are finite, as they are when $s=T$.

Consider first $x\geq b(s)+\ell$, and let the minimizing player wait until
$\rho$. For any choice $\tau\leq\rho$ of the maximizing player, the
payment is $f(t+\tau)$ if $\tau<\rho$. A larger payment at $\tau=\rho$
is possible only if the reserve hits zero or reaches time $s$ below $b(s)$.
Either event requires displacement at least $\ell$ before $\tau$, and the
extra payment is at most $\Delta_0$. Since $f'\leq-c$, the value of the
shortened game is therefore bounded by
\begin{equation}\label{eq:app-dividend-estimate}
 \begin{split}
 v(t,x)
 &\leq\sup_{\tau\in\mathcal T_\rho}
 \left\{f(t)-c\,\mathbb E[\tau]
 +\Delta_0\,\mathbb P\!\left(
       \sup_{r\leq\tau}|A_r(x)-x|\geq\ell\right)\right\}\\
 &\leq f(t)+\sup_{\tau\in\mathcal T_\rho}
 \left(-c+\frac{\Delta_0K_T h}{\ell^4}\right)\mathbb E[\tau].
 \end{split}
\end{equation}
Choose $C$ so that $\Delta_0K_T/C^4<c$, and put $\ell=Ch^{1/4}$.
The last supremum is zero, since its coefficient is negative and
$\tau=0$ is admissible. Together with $v\geq f$, this proves
$v(t,x)=f(t)$ for $x\geq b(s)+Ch^{1/4}$, and hence the bound for $b(t)$ in  \eqref{eq:app-displacement}.

For the capital injection boundary, take $0<x\leq a(s)-\ell$ and let the
maximizing player wait until $\rho$. For every minimizing stopping time
$\delta\leq\rho$, the payment is $m(t+\delta)$ unless both players reach
time $s$ without absorption. A shortfall then requires $A_h(x)>a(s)$,
so again a displacement of at least $\ell$ must occur before $\delta$.
The shortfall is at most $\Delta_0$. Using $m'\geq c$ and
\eqref{eq:app-exit} gives
\[
 v(t,x)\geq m(t)+\inf_{\delta\in\mathcal T_\rho}
       \left(c-\frac{\Delta_0K_T h}{\ell^4}\right)\mathbb E[\delta]
 \geq m(t).
\]
Since $v\leq m$, equality holds. With the same choice
$\ell=Ch^{1/4}$, this proves the bound for $a(t)$. If
$a(s)-Ch^{1/4}\leq0$, that bound follows directly from $a(t)\geq0$.

Apply the argument first with $s=T$. It proves
\eqref{eq:app-terminal-bound} and finiteness of $b(t)$ at every earlier
time. Spatial monotonicity and $f(t)<m(t)$ then imply finiteness of $a(t)$.
The argument can consequently be repeated for every $s<T$, proving
\eqref{eq:app-displacement}.
\end{proof}

\begin{lemma}\label{lem:app-continuity}
The boundaries are continuous on $[0,T)$ and satisfy
$0\leq a(t)<b(t)<\infty$. Their terminal limits are
$a(T-)=A_\star$ and $b(T-)=B_\star$.
\end{lemma}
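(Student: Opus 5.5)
We need continuity of $a,b$ on $[0,T)$, the strict separation $a<b$, and the terminal limits. Lemma~\ref{lem:app-displacement} supplies one-sided Hölder-type bounds: $a$ cannot drop too fast going backward in time, and $b$ cannot rise too fast. To get two-sided continuity we combine these bounds with semicontinuity properties of the contact sets, which follow from the continuity of $v$ (Lemma~\ref{lem:app-regularity}) and of $f,m$.

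\textbf{Semicontinuity.} Take $t_n\to t<T$. Since $\{(t,x):v=m\}$ and $\{(t,x):v=f\}$ are closed sets in $[0,T)\times[0,\infty)$, any limit point of $(t_n,a(t_n))$ lies in the injection set. Spatial monotonicity then gives $\limsup a(t_n)\le a(t)$, so $a$ is upper semicontinuous. The same argument on the dividend set gives $\liminf b(t_n)\ge b(t)$, so $b$ is lower semicontinuous. This uses finiteness of $b$ and a uniform bound on $a,b$ near $t$, which follow from \eqref{eq:app-displacement} with $s=T$.

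\textbf{Continuity.} For left-continuity at $s$, apply \eqref{eq:app-displacement} with $t\uparrow s$. This gives $\liminf_{t\uparrow s}a(t)\ge a(s)$ and $\limsup_{t\uparrow s}b(t)\le b(s)$, which together with semicontinuity yield left-continuity. For right-continuity at $t$, use \eqref{eq:app-displacement} with $s\downarrow t$. This gives $a(s)\le a(t)+C(s-t)^{1/4}$ and $b(s)\ge b(t)-C(s-t)^{1/4}$, so $\limsup_{s\downarrow t}a(s)\le a(t)$, which is already known. The missing inequality is $\liminf_{s\downarrow t} a(s)\ge a(t)$, and similarly for $b$.

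\textbf{Main obstacle.} This missing one-sided estimate is where I expect the real difficulty. My first attempt would use strict monotonicity of the obstacles. Suppose $a(s_n)\le a(t)-\eta$ along $s_n\downarrow t$. Then at the point $(t,a(t)-\eta/2)$ we have $v=m$, while at nearby later times $v<m$ on a neighbourhood. In the continuation region $\mathcal Lv=0$ holds, whereas on the contact set $v=m$ gives $\mathcal Lv=m'(t)\ge c>0$. I would apply the strong maximum principle to $w=m-v\ge0$ on the region $\{s>t,\ x<a(t)-\eta/4\}$ near $t$, using $\mathcal L w=m'-\mathcal Lv\ge0$ together with parabolic backward propagation. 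Since $w=0$ at time $t$ throughout $(0,a(t))$, a contradiction should follow via the Hopf/boundary-point lemma. An alternative is a game argument. If $v(s,y)<m(s)$ for $y$ near $a(t)-\eta/2$, starting at $(t,a(t)-\eta/2)$ the maximizer waits for a short time, and the expected gain from $m$ increasing at rate $\ge c$ should exceed the small loss, contradicting $v(t,\cdot)=m(t)$. I would try this stopping-game version first, since the setting is the same as in Lemma~\ref{lem:app-displacement}. The same argument handles $b$, using $f'\le -c$.

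\textbf{Separation and terminal limits.} Strict separation $a(t)<b(t)$ is immediate, since $v$ is continuous in $x$ and $f<m$; this makes the two contact sets disjoint closed sets, and $v$ must pass continuously from $m$ to $f$. For the terminal limits, \eqref{eq:app-terminal-bound} gives $\liminf a\ge A_\star$ and $\limsup b\le B_\star$. The reverse inequalities come from the uniform estimate $|v(T-h,x)-G(x)|\le C(h+\sqrt h)$. For $x>A_\star$ we have $G(x)<m(T)$, so $v(t,x)<m(t)$ for $t$ near $T$, giving $\limsup a\le A_\star$. Likewise $G(x)>f(T)$ for $x<B_\star$ gives $\liminf b\ge B_\star$.
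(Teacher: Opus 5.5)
Your semicontinuity, left-continuity, separation and terminal-limit steps match the paper. The step you flag as the main obstacle, excluding the right jumps $a(t+)<a(t)$ and $b(t+)>b(t)$, is however left open, and neither proposed route closes it.

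\textbf{The route through $w=m-v$.} In the continuation set $\mathcal L w=m'>0$, so $w$ is a strict subsolution of the backward operator; along the diffusion it is a submartingale. The strong minimum principle and the Hopf lemma at a zero minimum require $\mathcal L w\le 0$, so the sign is wrong. Concretely, $v\equiv m(t_0)$ on $(t_0,t_0+h]\times I$ solves $\mathcal L v=0$, lies strictly below $m$ for $s>t_0$, and equals $m$ at $s=t_0$. Hence no argument using only the equation for $m-v$ near that rectangle can produce a contradiction.

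\textbf{The game route.} Letting the maximizer wait gives a lower bound on $v(t,x)$, which is consistent with $v(t,x)=m(t)$. This is exactly how Lemma~\ref{lem:app-displacement} obtains $a(t)\ge a(s)-C(s-t)^{1/4}$, the inequality you already have. To contradict $v(t,x)=m(t)$ you need $v(t,x)<m(t)$, i.e.\ a minimizer strategy. If the minimizer waits, the increase of $m$ works against her unless $v(s,y)<m(t)$, and nothing provides that.

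\textbf{Right limits.} A sequence $s_n\downarrow t$ does not give a full rectangle in the continuation set. You need existence of the right limits. These follow from \eqref{eq:app-displacement} by letting $r\downarrow t$ in $b(r)\le b(s)+C(s-r)^{1/4}$ and then taking $s\downarrow t$, and similarly for $a$.

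\textbf{The missing idea: work with $v_x$ instead of $v$.}
\begin{enumerate}
\item First show $v_x<0$ throughout the continuation set. You have $v_x\le0$ by monotonicity and $\mathcal L(v_x)=0$ there. Since $v(t,\cdot)$ falls from $m(t)$ to $f(t)$ across $(a(t),b(t))$, $v_x<0$ somewhere. It\^o's formula on $v_x$ (a strong maximum principle) then spreads strict negativity to every continuation point.
\item If $b(t_0)<b(t_0+)$ or $a(t_0+)<a(t_0)$, choose an interval $I$ strictly between the two values. Using the right limit and continuity of $v$, $(t_0,t_0+h]\times\overline I$ lies in the continuation set.
\item At time $t_0$, $v(t_0,\cdot)$ is constant on $I$ ($=f(t_0)$ or $m(t_0)$), so $v_x(t_0,\cdot)=0$ there.
\item By continuity of $v_x$ up to $t_0$ (Lemma~\ref{lem:app-regularity}), with stopping at the exit time $\tau_I$ from $I$,
\[
0=v_x(t_0,x)\le \E\bigl[v_x(t_0+h,A_h(x))\ind_{\{\tau_I>h\}}\bigr]<0,
\]
a contradiction.
\end{enumerate}
The maximum principle is now applied to a solution of the homogeneous equation attaining its maximum value $0$, which is the correct sign. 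It rules out the flat profile $v\equiv m(t_0)$ above because that profile would force $v_x=0$ inside the continuation set.
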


\begin{proof}
Finiteness follows from Lemma~\ref{lem:app-displacement}. Continuity of $v(t,\cdot)$ at $x=0$, together with $v(t,0)=m(t)>f(t)$, implies $b(t)>0$. Continuity and spatial monotonicity of $v$
then imply that the contact sets at time $t<T$ are
$[0,a(t)]$ and $[b(t),\infty)$. In particular, both contact levels are
attained. The two levels cannot coincide, since otherwise their common point would
have value both $m(t)$ and $f(t)$, which is a contradiction. Thus $a(t)<b(t)$, and the continuation
interval is exactly $(a(t),b(t))$.

We first identify which discontinuities are still possible.
If $x>a(t)$, then $v(t,x)<m(t)$, and this strict inequality persists at
nearby times. Thus $a$ is upper semicontinuous. Similarly, testing
$0<x<b(t)$ shows that $b$ is lower semicontinuous. Combining these facts
with \eqref{eq:app-displacement}, applied to times $r<t$, proves left
continuity of both boundaries.

The same estimate also gives right limits. Indeed, for $t<r<s$, $b(r)\leq b(s)+C(s-r)^{1/4}$. First let $r\downarrow t$ and then take the lower limit as $s\downarrow t$.
This yields $\limsup_{r\downarrow t}b(r)\leq
\liminf_{s\downarrow t}b(s)$, so $b(t+)$ exists. Reversing the
inequalities proves existence of $a(t+)$. Both limits are finite because
$0\leq a<b$ and \eqref{eq:app-terminal-bound} bounds $b$ uniformly.
Semicontinuity leaves only
\begin{equation}\label{eq:app-possible-jumps}
 b(t)\leq b(t+),\qquad a(t)\geq a(t+).
\end{equation}
To rule out the jumps in \eqref{eq:app-possible-jumps}, we first show that $v_x(t,x)<0$ throughout the continuation region for $t<T$. Spatial monotonicity gives $v_x\leq0$, and the equation $\mathcal Lv=0$ in the continuation set implies that $v$ is
smooth there and $\mathcal L(v_x)=0$.

Fix $(t,x)$ in the continuation set, allowing $t=0$. Since
$v(t,\cdot)$ decreases from $m(t)$ to $f(t)$, the mean value theorem gives
a point $y$ in the same continuation interval with $v_x(t,y)<0$.
Choose an open interval $I$ whose closure is inside $(a(t),b(t))$ and
which contains both $x$ and $y$. Continuity of $v$ ensures that
$[t,t+h]\times\overline I$ remains in the continuation set for some
$h>0$; continuity of $v_x$ allows $h$ to be chosen so that
$v_x(t+h,\cdot)<0$ near $y$.

Let $\tau_I:=\inf\{r\geq0:A_r(x)\notin I\}$. If $t>0$, the closed
rectangle $[t,t+h]\times\overline I$ is contained in an open set where
$v_x\in C^{1,2}$ and $\mathcal L(v_x)=0$. Then It\^o's formula gives
\[
 v_x(t+h\wedge\tau_I,A_{h\wedge\tau_I}(x))-v_x(t,x) =\sigma\int_0^{h\wedge\tau_I}
             v_{xx}(t+r,A_r(x))\dd W_r.
\]
The integrand is bounded on this rectangle, so the stochastic integral
has mean zero. If $t=0$, the resulting expectation identity follows by
starting at $(\varepsilon,x)$ with elapsed horizon $h-\varepsilon$ and
then letting $\varepsilon\downarrow0$. The continuity and boundedness of $v_x$ on $[0,h]\times\overline I$, established in
Lemma~\ref{lem:app-regularity}, allow us to pass
to the limit by dominated convergence. Thus, for every $0\le t<T$, we obtain
\[
 \begin{split}
 v_x(t,x)
 &=\mathbb E\!\left[
    v_x(t+h\wedge\tau_I,A_{h\wedge\tau_I}(x))\right]\\
 &\leq\mathbb E\!\left[
    v_x(t+h,A_h(x))\,\mathbf1_{\{\tau_I>h\}}\right]<0.
 \end{split}
\]
The last inequality is strict because the nondegenerate diffusion has positive probability of remaining in $I$ until time $h$ and ending in a neighborhood of $y$ where $v_x(t+h,\cdot)<0$. This proves the claim.

Now suppose $b(t_0)<b(t_0+)$, and choose $b(t_0)<x_1<x_2<b(t_0+)$ and  $I=(x_1,x_2)$. For sufficiently small $h>0$ with $t_0+h<T$, the rectangle
$(t_0,t_0+h]\times\overline I$ lies in the continuation set. Indeed, the
right limit of $b$ keeps this interval below the dividend boundary, while
continuity of $v$ and $f(t_0)<m(t_0)$ keep the upper obstacle inactive.
At time $t_0$, however, $v(t_0,\cdot)=f(t_0)$ throughout $I$, so
$v_x(t_0,x)=0$ there.

For $x\in I$ and $0<\varepsilon<h$, the closed rectangle
$[t_0+\varepsilon,t_0+h]\times\overline I$ lies entirely in the
continuation set. We can therefore apply the same stopped It\^o formula starting at $(t_0+\varepsilon,x)$, with elapsed horizon $h-\varepsilon$, and then let $\varepsilon\downarrow0$. By the continuity and boundedness of $v_x$ on
$[t_0,t_0+h]\times\overline I$, we have
\[
 0=v_x(t_0,x)
 \leq\mathbb E\!\left[
   v_x(t_0+h,A_h(x))\,\mathbf1_{\{\tau_I>h\}}\right]<0,
\]
which is a contradiction.

If $a(t_0+)<a(t_0)$, choose
$a(t_0+)<x_1<x_2<a(t_0)$. The right limit keeps this interval above the capital
injection boundary immediately after $t_0$, and continuity of $v$ keeps
the lower obstacle inactive. Thus, the same continuation rectangle exists,
whereas $v(t_0,\cdot)=m(t_0)$ on $I$. The identical argument gives the same
contradiction. Both possible jumps in \eqref{eq:app-possible-jumps} are therefore
excluded. This proves right continuity.

It remains to prove that $a(t)\to A_\star$ and $b(t)\to B_\star$ as
$t\uparrow T$. For $0<x<B_\star$, $G(x)>f(T)$ and
$v(t,x)\to G(x)$ by Lemma~\ref{lem:app-regularity}. Hence
$v(t,x)>f(t)$ for $t$ sufficiently close to $T$, which implies
$\liminf_{t\uparrow T}b(t)\geq B_\star$. Likewise, for $x>A_\star$,
$G(x)<m(T)$, which gives $\limsup_{t\uparrow T}a(t)\leq A_\star$.
The reverse inequalities follow from \eqref{eq:app-terminal-bound}.
Thus $\lim_{t\uparrow T}a(t)=A_\star$ and
$\lim_{t\uparrow T}b(t)=B_\star$.
\end{proof}

\section{Numerical Implementation}
\label{app:numerical-details}

At an interior grid point $x_i$, the spatial generator is approximated
by
\begin{equation}
\label{eq:appendix-spatial-operator}
    (\mathcal L_h w)_i
    =
    \mu\frac{w_{i+1}-w_{i-1}}{2\Delta x}
    +\frac{\sigma^2}{2}
      \frac{w_{i+1}-2w_i+w_{i-1}}{\Delta x^2},
    \qquad i=1,\ldots,N_x-1.
\end{equation}
Equivalently, define
$A_h=I-\Delta t\,\mathcal L_h$. Its interior coefficients are
\[
    (A_h)_{i,i-1}
    =
    -\Delta t
    \left(
        \frac{\sigma^2}{2\Delta x^2}
        -\frac{\mu}{2\Delta x}
    \right),\qquad  (A_h)_{i,i}
    =
    1+\frac{\sigma^2\Delta t}{\Delta x^2},\qquad (A_h)_{i,i+1}
    =
    -\Delta t
    \left(
        \frac{\sigma^2}{2\Delta x^2}
        +\frac{\mu}{2\Delta x}
    \right).
\]
The known boundary values $v_0^n=m_n$ and $v_{N_x}^n=f_n$ are absorbed
into the right-hand side at $i=1$ and $i=N_x-1$, respectively.
The terminal condition is imposed directly at $t=T$.

For a fixed time level $t_n$, let $q^n$ denote the right-hand side after
the boundary contributions have been incorporated. Given the current
iterate $v^{n,(k)}$, define the Gauss--Seidel candidate
\[
    \widehat v_i^{\,n,(k+1)}
    =
    \frac{1}{(A_h)_{i,i}}
    \left[
        q_i^n
        -(A_h)_{i,i-1}v_{i-1}^{n,(k+1)}
        -(A_h)_{i,i+1}v_{i+1}^{n,(k)}
    \right].
\]
The projected SOR update is
\begin{equation}
\label{eq:appendix-psor}
    v_i^{n,(k+1)}
    =
    \min\left\{
        m_n,\,
        \max\left[
            f_n,\,
            v_i^{n,(k)}
            +\omega
             \bigl(
                 \widehat v_i^{\,n,(k+1)}
                 -v_i^{n,(k)}
             \bigr)
        \right]
    \right\},
\end{equation}
for $i=1,\ldots,N_x-1$. In the numerical experiments, we use $\omega=1.35$ and terminate the iteration when
\[
    \left\|
        v^{n,(k+1)}-v^{n,(k)}
    \right\|_\infty
    \leq
    \varepsilon_{\mathrm{PSOR}},
    \qquad
    \text{with } \varepsilon_{\mathrm{PSOR}}=10^{-9}.
\]
The solution from time level $t_{n+1}$ is used as the initial iterate at
$t_n$.

For spatial truncation, let $t<T$, set $H=T-t$, and suppose that $ R-x>|\mu|H$. For the uncontrolled diffusion
$A_u(x)=x+\mu u+\sigma W_u$, the probability of reaching the artificial boundary before maturity
satisfies
\[
    \mathbb P_{t,x}
    \left(
        \sup_{0\leq u\leq H}
        \{\mu u+\sigma W_u\}
        \geq R-x
    \right)
    \leq
    \mathbb P
    \left(
        \sup_{0\leq u\leq H}\sigma W_u
        \geq R-x-|\mu|H
    \right).
\]
By the reflection principle,
\begin{equation*}
    \mathbb P_{t,x}(\tau_R\leq T-t)
    \leq
    2\left[
        1-\Phi\left(
            \frac{R-x-|\mu|(T-t)}
                 {\sigma\sqrt{T-t}}
        \right)
    \right],
\end{equation*}
where $\tau_R= \inf\{u\ge 0: A_u(x)\ge R\}$ is the first hitting time of $R$ and $\Phi$ is the
standard normal distribution function.

Let  $ \Delta_0:= \sup_{0\leq s\leq T}\bigl(m(s)-f(s)\bigr)$. The resulting truncation error is bounded by
\begin{equation*}
    2\Delta_0
    \left[
        1-\Phi\left(
            \frac{R-x-|\mu|(T-t)}
                 {\sigma\sqrt{T-t}}
        \right)
    \right].
\end{equation*}
Accordingly, $R$ is selected so that this quantity is negligible over
the region in which numerical results are reported.

At each preterminal time level, we identify the contact sets using the tolerance $\varepsilon_c=2\times10^{-7}$. Hence, the definitions
\[
    a_h(t_n)
    =
    \max\left\{
        x_i:
        v_j^n\geq m_n-\varepsilon_c
        \text{ for every }j\leq i
    \right\}
\]
and
\[
    b_h(t_n)
    =
    \min\left\{
        x_i:
        v_j^n\leq f_n+\varepsilon_c
        \text{ for every }j\geq i
    \right\}
\]
select the connected contact intervals adjacent to the left and right
boundaries. If only $x_0=0$ belongs to the upper-obstacle contact set,
we set $a_h(t_n)=0$. If the upper-obstacle contact set is empty at maturity, we use $a_h(T)=0$ solely for plotting. We also set $  b_h(T)=B_\star$, consistently with the terminal dividend boundary. Since convergence of
the value approximation does not automatically imply convergence of
the extracted free boundaries, boundary robustness is assessed by
varying both $(N_x,N_t)$ and $\varepsilon_c$ in the baseline numerical results.

For the canonical examples, we reconstruct the control value using \eqref{eq:numerical-value-anchor}--\eqref{eq:numerical-value-reconstruction}. The integral $\int_{t_n}^{T}f(s)\,\dd s$  in \eqref{eq:numerical-value-anchor} is evaluated analytically whenever a closed-form expression is available. The integral $\int_{t_n}^{T}f'(s)b_h(s)\,\dd s$ is evaluated using the composite trapezoidal rule and the discrete boundary values, with piecewise linear interpolation
in time. The spatial integral in \eqref{eq:numerical-value-reconstruction} is also evaluated using the composite trapezoidal rule. If the integration endpoint is not a spatial grid point, linear interpolation is used to evaluate the contribution
from the final partial interval.

\bibliographystyle{apalike}
\bibliography{references}

\end{document}